\theoremstyle{definition}
\newtheorem{definition}{Definition}[section]
\newtheorem{observation}[definition]{Observation}
\newtheorem{question}[definition]{Question}
\newtheorem{fact}[definition]{Fact}
\newtheorem{notation}[definition]{Notation}
\newtheorem{claim}[definition]{Claim}
\theoremstyle{plain}
\newtheorem*{theorem*}{Theorem}
\theoremstyle{plain}
\newtheorem{theorem}[definition]{Theorem}
\newtheorem{proposition}[definition]{Proposition}
\newtheorem{lemma}[definition]{Lemma}
\newtheorem{corollary}[definition]{Corollary}
\newtheorem{remark}[definition]{Remark}
\title{Higher Dimensional Chain Conditions}
\author{Stevo Todorcevic and Jing Zhang}
\date{}
\address[Todorcevic]{\hfill\break Department of Mathematics, University of Toronto,
Canada
\hfill\break stevo@math.toronto.edu}
\address{\hfill\break Institut de Math\'ematiques de Jussieu, CNRS, Paris,
France
\hfill\break stevo.todorcevic@imj-prg.fr}
\address{\hfill\break Matemati\v{c}ki Institut, SANU, Belgrade, Serbia
\hfill\break stevo.todorcevic@sanu.ac.rs}
\address[Zhang]{\hfill\break Department of Mathematics,
University of Toronto
\hfill\break jingzhan@alumni.cmu.edu}
\thanks{
{\it 2010 \emph{MSC}.} 03E02, 03E10 \newline
{\it Key words and phrases.} saturated ideal, partition relations, forcing, chain condition, $\Delta$-system lemma.\newline
{The research on this paper is partially supported by
grants from NSERC(455916),  CNRS(UMR7586) and SFRS(7750027-SMART)
}}
\begin{document}
\maketitle
\begin{abstract}
We investigate higher dimensional chain conditions, where the largeness notion is given by Fubini products of a given ideal. From strong saturation properties of an ideal, we derive abstractly versions of higher dimensional $\Delta$-system lemma, which imply many posets, including any finite support iteration of $\sigma$-centered posets and measure algebras, satisfy the higher dimensional chain conditions. We then show that if a poset satisfies a strengthening of the $\sigma$-finite chain condition by Horn and Tarski, then it satisfies higher dimensional chain conditions.  As an application, we derive Ramsey-theoretic consequences, namely various \emph{partition hypotheses} as studied by Bannister, Bergfalk, Moore and Todorcevic, from the existence of ideals satisfying strong chain conditions.
\end{abstract}

\section{Introduction and preliminaries}

Recall that a family $U$ of sets forms a \emph{$\Delta$-system with root $r$} if for any distinct $u,v\in U$, $u\cap v =r$. The most commonly known form of the $\Delta$-system lemma, due to Shanin \cite{Shanin}, is that when the sets in $U$ are finite: 
For every uncountable family $U$ of finite sets there exist an uncountable subcollection $U^*\subset U$ and $r$ such that $U^*$ forms a $\Delta$-system with root $r$. Shanin's $\Delta$-system lemma and its natural extension to sets of larger cardinality due to Erd\H{o}s and Rado \cite{ER} is a useful tool in proving that iterations and products of forcings have certain chain conditions. Thus, while the  original motivation behind the  notion of $\Delta$-system comes from topology (see \cite{CN}), it has become a standard tool in forcing ever since its famous  applications in this area by Cohen \cite{Cohen} and Solovay and Tennembaum \cite{ST}.

The 2-dimensional $\Delta$-system lemma first appeared in the work of Todorcevic \cite{reals} where it was used to analyze 2-dimensional partition relations in forcing extensions obtained by standard iterations and products of small forcings. 

It turns out that higher dimensional $\Delta$-system lemma has proven to be very useful in establishing consistency results regarding higher dimensional partition relations. Shelah \cite{SierpinskiI,SierpinskiII} developed the higher dimensional versions to give a consistent answer to a conjecture of Galvin. Some later applications and refinements of the higher dimensional $\Delta$-system lemma appear in \cite{MR398837, MR4275058, NatashaHL, MR2520110, MR4606258, MR4568267, MR4107536,MR3961609, MR4217964}.

One such recent application most relevant to this paper is the work of Bergfalk and Lambie-Hanson \cite{MR4275058} where they use the higher dimensional $\Delta$-system lemmas to analyze the combinatorics in the forcing extension by adding weakly compact many Hechler reals, which has the consequence that the $n$-th derived limit of a certain inverse system is $0$ for all $n\in \omega$. Later, the partition relations responsible for the algebraic consequences were extracted in \cite{bannister2023descriptive} and it was proved that these partition relations indeed hold in the weakly compact Hechler model.

A typical setup in proving these consistency results is that one starts with a cardinal in the ground satisfying strong partition relations. Then one analyzes the suitable forcing extensions utilizing these partition relations that hold in the ground model. As a result, in all of the applications above, the final model of set theory is a specific forcing extension over a suitable ground model.

The goal of this paper is to go one further step in this chain of reductions, deriving Ramsey-type consequences only from the existence of ideals with strong saturation properties, without making extra assumptions about the current universe. There is a long history of using saturated ideals (or more generally, compactness principles including large cardinals, forcing axioms and so on) to derive positive partition relations, for example, \cite{MR673792, MR1833480, MR1968607,MR716846, suslintrees}. In \cite{suslintrees}, the main result from \cite{reals} was extended by deriving the positive partition relations abstractly from the existence of ideals on the continuum with strong saturation properties, giving rise to many more consistent configurations. This work is a continuation of this line of research. To state the main result of this paper, we need a couple more definitions.

\begin{definition}
Let $I\subset P(\kappa)$ is an ideal on $\kappa$, let 
	\begin{enumerate}
	\item $I^*=\{A\subset \kappa: A^c\in I\}$ denote the dual filter on $\kappa$,
	\item $I^+=\{A\subset \kappa: A\not\in I\}$ denote the collection of positive sets.
	\end{enumerate}
 
We say $I$ is 
\begin{enumerate}
\item \emph{proper} if $\kappa\not\in I$,
\item  \emph{uniform} if $[\kappa]^{<\kappa}\subset I$,
\item \emph{$\eta$-complete} where $\eta$ is a cardinal if for any $\eta'<\eta$ and any $\langle A_i\in I: i<\eta'\rangle$, $\bigcup_{i<\eta'} A_i\in I$, and 
\item \emph{normal} if for any $\langle A_i\in I: i<\kappa\rangle$, $\bigtriangledown_{i<\kappa} A_i =_{def} \{\alpha<\kappa: \exists \beta<\alpha , \alpha\in A_\beta\}\in I$.
\end{enumerate} 

\end{definition}

\begin{definition}\label{definition: saturation}
An ideal $I$ on $\kappa$ is \emph{$\gamma$-saturated} if whenever $\langle X_i: i<\gamma\rangle\subset I^+$ are given, there are $i<j<\gamma$ such that $X_i\cap X_j\in I^+$.
\end{definition}

In this paper, unless otherwise specified, all ideals are assumed to be proper and uniform.

\begin{definition}
Let $I$ be an ideal on $\kappa$. Define the ideals $I^n$ on $\kappa^n$ for each $n\in \omega$ by recursion: $X\in I^n$ iff $\{\alpha: \{\bar{\beta}\in \kappa^{n-1}: (\alpha,\bar{\beta})\in X\}\in I^{n-1}\}\in I^*$.
\end{definition}

Note that if an ideal $I$ on $\kappa$ is $\kappa$-complete, then $I^n$ is $\kappa$-complete for any $n\in \omega$. Since all the ideals in this paper are uniform, we may assume $I^n$ concentrates on $[\kappa]^n =_{\mathrm{def}} \{a=\langle a_i: i<n\rangle\in \kappa^n: \forall i<n-1, a_i<a_{i+1}\}$.

\begin{definition}
For any $X\subset [\kappa]^n$, $k<n$ and $a\in [\kappa]^k$, we let 
\begin{enumerate}
\item $(X)_a $ denote the set $\{b\in [\kappa]^{n-k}: (a,b)\in X\}$, and 
\item $X_0 = \{\alpha\in \kappa: (X)_\alpha\in I^+\}$.
\end{enumerate}

\end{definition}

\begin{remark}
If $I$ is an ideal on $\kappa$, we will implicitly assume that if $X\in (I^n)^+$ and $(a,c)\in X$, then $(X)_a\in (I^{n-|a|})^+$. This is without loss of generality since sets satisfying this property are dense in $P([\kappa]^n)/I^n$.
\end{remark}

\begin{definition}
For $a,b\in [\kappa]^n$, we say $a\simeq b$ iff $(a,a\cap b, <) \simeq (b,a\cap b, <)$. Namely, the unique order preserving isomorphism from $a$ to $b$ is identity on $a\cap b$.
\end{definition}

\begin{definition}\label{definition: weakDelta}
Let $J$ be an ideal on $\kappa$.
We say that \emph{$\nu$-$J^n$-weak-$\Delta$-system lemma holds} if given any $X\in (J^n)^+$ and $\langle X_a\in [\kappa]^{<\nu}: a\in X\rangle$, there exist $W: [\kappa]^{\leq n-1} \to [\kappa]^{<\nu}$ and $J^n$-positive $Y\subset X$ such that for any $a\simeq b \in Y$, $X_a\cap X_b \subset W(a\cap b)$.
\end{definition}

\begin{remark}
In the case that $\nu\leq \kappa$, the assumption that each $X_a$ is a subset of $\kappa$ in Definition \ref{definition: weakDelta} is unnecessary. Namely, it is equivalent to the following: given any set $S$, $X\in (J^n)^+$ and $\langle X_a\in [S]^{<\nu}: a\in X\rangle$, there exist $W: [\kappa]^{\leq n-1} \to [S]^{<\nu}$ and $J^n$-positive $Y\subset X$ such that for any $a\simeq b \in Y$, $X_a\cap X_b \subset W(a\cap b)$.
\end{remark}

The first main result of this paper is an abstract derivation of weak $\Delta$-system lemmas from the existence of a saturated ideal.
 
\begin{theorem}\label{theorem: nDWeakDelta}
Let $J$ be a normal uniform $\gamma$-saturated ideal on $\kappa$ for some $\gamma<\kappa$. Then for $\nu \in [\gamma, \kappa)\cap cof(\geq \gamma)$, the $|\nu|$-$J^{n+1}$-weak-$\Delta$-system lemma holds for any $n\in \omega$.
\end{theorem}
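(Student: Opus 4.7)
The plan is to prove by induction on $n$ a strengthening of the stated conclusion, namely the containment form: there exist $Y\in (J^{n+1})^+$ with $Y\subseteq X$ and a single $R\in [\kappa]^{<\nu}$ such that $X_a\subseteq R$ for every $a\in Y$; setting $W(c)\equiv R$ then recovers the weak $\Delta$-system conclusion. Here we may assume $\nu$ is a cardinal (by replacing $\nu$ with $|\nu|$). The main technical content lies in the base case $n=0$, which I would prove via a forcing argument with $\mathbb{Q}=P(\kappa)/J$, leveraging both its $\gamma$-c.c.\ (equivalent to $\gamma$-saturation) and $\kappa$-completeness (from normality). The inductive step is then essentially bookkeeping.

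For the base case, let $\dot\alpha^*$ denote the canonical $\mathbb{Q}$-name for the generic $\kappa$-point, and set $\dot X := X_{\dot\alpha^*}$, a name for a set of size $<\nu$. By $\kappa$-completeness, the partition of $X$ according to the value of $|X_\alpha|$ has fewer than $\kappa$ pieces, so some $J$-positive $X'\subseteq X$ has $|X_\alpha|$ constantly equal to some $\rho<\nu$ for all $\alpha\in X'$. Below the condition corresponding to $X'$, enumerate $\dot X$ in increasing order as $\{\dot x_\xi : \xi<\rho\}$. For each $\xi<\rho$, by $\gamma$-c.c., the name $\dot x_\xi$ is decided on a maximal antichain of size $<\gamma$, giving a ground-model set $S_\xi$ of possible values with $|S_\xi|<\gamma$. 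Let $T=\bigcup_{\xi<\rho} S_\xi$; cardinal arithmetic (using regularity of $\gamma$) gives $|T|<\nu$ in both subcases $\rho<\gamma$ and $\rho\geq\gamma$, and the condition $X'$ forces $\dot X\subseteq \check T$. Translating via the $\kappa$-complete Boolean algebra structure, $\{\alpha\in X' : X_\alpha\not\subseteq T\}\in J$, so $Y := \{\alpha\in X' : X_\alpha\subseteq T\}$ is $J$-positive, and we take $R := T$.

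For the inductive step, suppose the containment form holds for $J^n$. Given $X\in (J^{n+1})^+$, for each $\alpha$ in the positive set $X_0 = \{\alpha : (X)_\alpha\in (J^n)^+\}$, apply the inductive hypothesis to the slice $(X)_\alpha\in (J^n)^+$ with the induced family $\langle X_{(\alpha,b)} : b\in (X)_\alpha\rangle$ to obtain $Y_\alpha\in (J^n)^+$ with $Y_\alpha\subseteq (X)_\alpha$ and $R_\alpha\in [\kappa]^{<\nu}$ such that $X_{(\alpha,b)}\subseteq R_\alpha$ for every $b\in Y_\alpha$. Now apply the base case to the family $\langle R_\alpha : \alpha\in X_0\rangle$ of sets of size $<\nu$ indexed by the $J$-positive set $X_0$, producing $X_0'\subseteq X_0$ in $J^+$ and $R^*\in [\kappa]^{<\nu}$ with $R_\alpha\subseteq R^*$ for all $\alpha\in X_0'$. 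Taking $Y := \{(\alpha,b) : \alpha\in X_0',\ b\in Y_\alpha\}$ gives $Y\in (J^{n+1})^+$ with $X_a\subseteq R^*$ for every $a\in Y$, completing the induction and yielding the weak $\Delta$-system conclusion with the constant map $W\equiv R^*$.
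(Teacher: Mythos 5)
Your proposed strengthening --- a single $R\in[\kappa]^{<\nu}$ with $X_a\subseteq R$ for all $a$ in a positive set $Y$ --- is false, so the argument cannot be repaired in this form. Take $X_\alpha=\{\alpha\}$ in the base case. Since $J$ is uniform, every $J$-positive $Y$ is unbounded in $\kappa$, so $\bigcup_{\alpha\in Y}X_\alpha=Y$ has size $\kappa$ and cannot be covered by any set of size $<\nu<\kappa$. The same example in dimension $2$ (take $X_{(\alpha,\beta)}=\{\alpha\}$) shows that even the pairwise intersections $X_a\cap X_b$ cannot be covered by a single small set: for $a=(\alpha,\beta)\simeq b=(\alpha,\beta')$ one has $X_a\cap X_b=\{\alpha\}$ with $\alpha$ ranging over an unbounded set. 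This is exactly why the conclusion of the weak $\Delta$-system lemma is a function $W$ of the root $a\cap b$ rather than a constant; the dependence on the root is essential and cannot be eliminated.

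The error in your forcing argument is in the translation step. The name $\dot X=j(\vec X)(\kappa)$ is a subset of $j(\kappa)$, not of $\kappa$; its elements are (named by) $V$-ordinals that typically lie in $[\kappa,j(\kappa))$ --- in the example above, $\dot X=\{[\mathrm{id}]_G\}=\{\kappa\}$. Your covering set $T$ is therefore a small set of ordinals that is \emph{not} a subset of $\kappa$, and the implication ``$X'\Vdash\dot X\subseteq\check T$, hence $X_\alpha\subseteq T$ for $J$-almost all $\alpha\in X'$'' is not a correct application of \L o\'s: that pullback requires $j(\vec X)(\kappa)\subseteq j(T)$, and $j(T)\neq\check T$ once $T\not\subseteq\kappa$ (in the example, $\{\alpha:X_\alpha\subseteq T\}=\emptyset$ even though $X'\Vdash\dot X\subseteq\check T$). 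The correct version of this step is the paper's Lemma \ref{lemma: 1DShrinking}: only $j(\vec E)(\kappa)\cap\kappa$ can be covered by a small ground-model subset of $\kappa$, which pulls back to $E_\alpha\cap\alpha\subseteq Y$, and one must separately use normality to arrange $\sup E_\alpha<\beta$ for $\alpha<\beta$; this controls pairwise intersections but not the sets themselves. With the containment form gone, your inductive step also collapses, and what remains is the actual content of the theorem: a case analysis over the possible overlap patterns of $a\simeq b$ (disjoint, equal last coordinate, and the intermediate ``types''), each handled by a separate generic-ultrapower argument as in Propositions \ref{proposition:2-DNonOverlapping}, \ref{proposition:2-DOverlapping} and Section \ref{section: n+1Knaster}.
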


We utilize Theorem \ref{theorem: nDWeakDelta} to prove that a large family of posets satisfy the \emph{higher dimensional chain conditions}, which are the key notions as we defined below.

\begin{definition}\label{definition: Knaster}
Fix $n\in \omega, m\in \omega\cup \{\omega\}$, a poset $P$ and an ideal $J$ on $\kappa$. We say 
\begin{enumerate}
\item that $P$ is $J^n$-$< m$-linked (or $J^n$-$m^-$-linked if $m=m^-+1$) if for any $X\in (J^n)^+$ and any $\langle p_a: a\in X\rangle$, there exists $Y\subset X$ in $(J^n)^+$ such that for any $\{a_i: i<k\}\subset Y$ with $k<m$ satisfying that for any $i,j<k$, $a_i\simeq a_j$, then there exists $q\in P$ such that $q\leq p_{a_i}$ for all $i<k$; 
\item that $P$ is $J^n$-Knaster if $P$ is $J^n$-2-linked.
\end{enumerate}
\end{definition}

\begin{theorem}\label{theorem: CohenRandom}
If $P$ is either 
	\begin{enumerate}
	\item a finite support iteration of $\sigma$-centered posets, or
	\item a measure algebra,
	\end{enumerate}
then for any uniform normal $\gamma$-saturated ideal $J$ on $\kappa$ with $\gamma<\kappa$, for any $n,m\in \omega$, $P$ is $J^n$-$m$-linked.
\end{theorem}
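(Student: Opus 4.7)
The plan is to reduce both parts to the weak $\Delta$-system lemma (Theorem \ref{theorem: nDWeakDelta}), then use the $\kappa$-completeness of $J$ (which holds since $J$ is normal and $\gamma<\kappa$) to pigeonhole the ``root data''. In each case one attaches to every condition $p_a$ a finite combinatorial shadow---the finite support in case (1), a finite set of approximating coordinates in case (2)---applies Theorem \ref{theorem: nDWeakDelta} to these shadows, and is left with the task of matching up, across any $m$-tuple of the same $\simeq$-type, the behavior of the conditions on the shared (root) part. The case $n=0$ is vacuous, so assume $n\geq 1$.

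For (1), write $P=\langle P_\alpha,\dot Q_\alpha:\alpha<\eta\rangle$ with $\Vdash_{P_\alpha}\dot Q_\alpha=\bigcup_{k<\omega}\dot C^k_\alpha$, each $\dot C^k_\alpha$ centered. Given $\langle p_a:a\in X\rangle$ with $X\in(J^n)^+$, set $s_a=\mathrm{supp}(p_a)$ and apply Theorem \ref{theorem: nDWeakDelta} with $\nu=\gamma$ to get positive $Y\subseteq X$ and $W:[\kappa]^{\leq n-1}\to[\kappa]^{<\gamma}$ with $s_a\cap s_b\subseteq W(a\cap b)$ whenever $a\simeq b$ in $Y$. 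Using $\kappa$-completeness of $J^n$ I would refine $Y$ repeatedly to stabilize: (i) $|s_a|$ and the relative position of $s_a$ with respect to $a$; (ii) for each $u\subseteq a$ the set $s_a\cap W(u)$, modulo the canonical order-preserving bijection between $\simeq$-related tuples; and most delicately (iii) for each root coordinate $\gamma$, by fixing a maximal antichain in $P_\gamma$ that decides it, the index $k$ such that $p_a(\gamma)\in\dot C^k_\gamma$. After these refinements, any $m$-tuple $a_1\simeq\cdots\simeq a_m$ in $Y$ has, at every shared coordinate, its pointwise conditions in a common centered piece, so a coordinate-by-coordinate construction along the iteration produces a common lower bound.

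For (2), each positive-measure condition $p_a$ in the measure algebra of $2^\eta$ admits, for any $\epsilon>0$, a clopen $\epsilon$-approximation $p^\epsilon_a$ depending on a finite set $s_a$ of coordinates. Applying Theorem \ref{theorem: nDWeakDelta} to these shadows and refining by $\kappa$-completeness so that on $Y$ the approximating clopen set depends (through the $\simeq$-type) only on $a\cap b$-shaped data and $\mu(p_a)$ lies in a fixed narrow interval $[c,c+\epsilon)$, one chooses $\epsilon<c/(2m)$ and invokes the Bonferroni inequality inside a common clopen root to conclude that any $m$-tuple in $Y$ with matching shape has positive-measure intersection.

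The step I expect to be the main obstacle is (iii) in case (1): the centered-piece index at a root coordinate $\gamma$ is only named over $P_\gamma$, so stabilizing it while preserving $J^n$-positivity of the index set requires processing the root coordinates one at a time in increasing $\gamma$, threading a maximal antichain through each stage and invoking $\kappa$-completeness to prune. A conceptually cleaner route, and perhaps the one the paper takes, would be to first verify the strengthened Horn--Tarski $\sigma$-finite chain condition (flagged in the abstract) for both classes of posets, and then derive $J^n$-$m$-linkedness uniformly from the general implication proved there, thereby absorbing the delicate pigeonhole in (iii) into a single structural hypothesis.
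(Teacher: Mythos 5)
Your sketch of case (2) is broadly aligned with the paper's Theorem \ref{theorem: Random}: the paper also reduces via Maharam's theorem to $(2^\lambda,\mu)$, applies Theorem \ref{theorem: nDWeakDelta} to the (countable) supports of the conditions, uses the Lebesgue density theorem to obtain a clopen $[s_a]$ on which $F_a$ has relative measure close to $1$, stabilizes the ``shape'' of $s_a$ relative to $W(a\cap b)$ by $\sigma$-completeness, and finishes with a Bonferroni-style estimate (the function $f(m,\epsilon)$ of Lemma \ref{lemma: intersection}) after checking, via Lemma \ref{lemma: enlarging}, that the relative measures are unchanged when conditioning on the joint clopen set $[s^*]$. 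Your description is short on the bookkeeping but the route is essentially the same.

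Case (1) is where there is a genuine gap, and you correctly flag it yourself at step (iii), but the workaround you propose does not fix it. The centered-piece index of $p_a(\gamma)$ is a $P_\gamma$-name--valued quantity; a maximal antichain in $P_\gamma$ deciding that index gives you a \emph{partition} of possibilities, not a single $k$ attached to $a$, and the choice of the relevant element of the antichain depends on $p_a\restriction\gamma$ itself, which varies with $a$. Stabilizing the index for a $J^n$-positive set of $a$ by pigeonhole would require a ground-model invariant to pigeonhole on, which you do not have. Moreover, your ``cleaner route'' through the Horn--Tarski strengthening does not recover the full statement: Theorem \ref{theorem: n+1-D} only yields $J^{n+1}$-Knaster, i.e.\ $2$-linkedness, whereas the theorem you are proving asserts $m$-linkedness for every $m$.

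What the paper actually does is quite different and sidesteps both problems. It first proves the Cohen case directly (Theorem \ref{theorem: Cohen}: $Add(\omega,\lambda)$ is $J^n$-$m$-linked via the weak $\Delta$-system lemma). For a finite support iteration $P_\kappa$ of $\sigma$-centered posets it then observes (Theorem \ref{theorem: sigma-m-linked}) that the set $D$ of $p\in P_\kappa$ for which there is a \emph{ground-model} function $f_p:\mathrm{supp}(p)\to\omega$ with $p\restriction\beta\Vdash p(\beta)\in\dot Q_{\beta,f_p(\beta)}$ for all $\beta\in\mathrm{supp}(p)$ is dense, and that the map $p\mapsto f_p$ lands in $Add(\omega,\kappa)$ with the property that if $\{f_{p_i}:i<m\}$ has a common lower bound there (i.e.\ the $f_{p_i}$ form a function), then the $p_i$ have a common lower bound in $P_\kappa$: one builds the lower bound by recursion on coordinates, at each shared coordinate $\beta$ using that the relevant $p_i(\beta)$ are all forced into the \emph{same} centered piece $\dot Q_{\beta,k}$. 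This restriction to $D$ turns the name-level index into a genuine ground-model datum, which is exactly what your step (iii) lacked; the weak $\Delta$-system and pigeonhole argument is then run only once, in the Cohen algebra, rather than coordinate-by-coordinate through the iteration.
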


The $J^n$-$m$-linked-ness has implications on the validity of the partition relations $\mathrm{PH}_n(\kappa)$ as defined in \cite{bannister2023descriptive}, see Theorem \ref{theorem: PH}, which was the original motivation of our study. By the result of \cite{suslintrees}, if $\kappa$ carries a normal uniform ideal $J$ such that $P(\kappa)/J$ is $J^n$-Knaster for all $n\in\omega$, then $\kappa\to (\kappa, \alpha)_2^2$ for all $\alpha<\omega_1$.

Finally, we define a strengthening of the $\sigma$-finite chain condition which we name the $\sigma$-$w$-finite chain condition (see Definition \ref{definition: sigmawedgefinitecc}) studied by Horn and Tarski \cite{horntarski} and show that such strong 1-dimensional chain conditions have higher dimensional consequences. 

\begin{theorem}\label{theorem: n+1-D}
Suppose $J$ is a uniform normal $\nu$-saturated ideal on $\kappa$ for some $\nu<\kappa$ and $n\in \omega$.
If $P$ is $\sigma$-$w$-finite-c.c, then $P$ is $J^{n+1}$-Knaster.
\end{theorem}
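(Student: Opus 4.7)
The plan is to reduce the higher-dimensional compatibility problem to the finite combinatorial content of $\sigma$-$w$-finite-c.c.\ using Theorem~\ref{theorem: nDWeakDelta} as the main bridge. Fix $X \in (J^{n+1})^+$ and a family $\langle p_a : a \in X\rangle$; the objective is to locate $Y \subseteq X$ in $(J^{n+1})^+$ such that $p_a, p_b$ are compatible whenever $a \simeq b$ are in $Y$.

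First, I would extract from $\sigma$-$w$-finite-c.c.\ a decomposition $P = \bigcup_{k<\omega} P_k$ together with its extra finite-control data. Using the normality of $J$ --- which lifts to a Fubini normality of $J^{n+1}$ --- I would apply a sequence of pigeonhole reductions down the coordinates of $[\kappa]^{n+1}$ to pass to $X_0 \in (J^{n+1})^+$ on which $p_a$ lies in a single fixed piece $P_k$ for all $a \in X_0$. Next, for each $a \in X_0$ I would associate a canonical finite invariant $X_a \subseteq \kappa$ encoding the combinatorial data of $p_a$ relevant to pairwise compatibility (a support, or the finite set of ordinals over which any potential incompatibility of $p_a$ is witnessed). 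Applying Theorem~\ref{theorem: nDWeakDelta} (with $\nu = \omega$) to $\langle X_a : a \in X_0\rangle$ produces $X_1 \subseteq X_0$ in $(J^{n+1})^+$ and a root function $W : [\kappa]^{\leq n} \to [\kappa]^{<\omega}$ with $X_a \cap X_b \subseteq W(a \cap b)$ for every $\simeq$-related pair in $X_1$. A further normal-ideal pigeonhole then stabilizes the trace of $p_a$ on each $W$-slice so that it depends only on the isomorphism type of $(a, a \cap b)$.

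The main obstacle --- and where the strengthening over the classical $\sigma$-finite-c.c.\ is essential --- is the final conversion of this root-bounded structure into genuine pairwise compatibility. The strategy is to argue by contradiction: if no $(J^{n+1})^+$ refinement $Y \subseteq X_1$ yields the $\simeq$-Knaster property, then one can extract an infinite sequence of pairwise incompatible conditions inside the single piece $P_k$, using that after the stabilizations above any incompatibility of $p_a$ with $p_b$ (for $a \simeq b$) is already witnessed on the finite root $W(a \cap b)$. The finite-control data packaged into $\sigma$-$w$-finite-c.c.\ is precisely what forbids such an infinite antichain in $P_k$, yielding the desired contradiction. The delicate part will be arranging the antichain extraction so that it lives genuinely in $P_k$ rather than merely tracking an abstract incompatibility pattern across the positive set; I anticipate this requires either one more normal pigeonhole to further stabilize the incompatibility type, or a mild induction on $n$ invoking the lower-arity case along the root structure provided by $W$.
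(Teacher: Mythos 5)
There is a genuine gap, and it sits at the center of your plan. You propose to attach to each condition $p_a$ ``a canonical finite invariant $X_a\subseteq\kappa$ encoding the combinatorial data of $p_a$ relevant to pairwise compatibility (a support, or the finite set of ordinals over which any potential incompatibility is witnessed)'' and then feed these sets into Theorem~\ref{theorem: nDWeakDelta}. But $P$ here is an \emph{abstract} poset: a $\sigma$-$w$-finite-c.c.\ decomposition $P=\bigcup_k P_k$ gives no notion of support and no way to localize (in)compatibility to finite subsets of $\kappa$. The support-based route via the weak $\Delta$-system lemma is exactly what the paper uses for the concrete posets of Section~\ref{section: Ramsey} (Cohen forcing, measure algebras), where conditions genuinely have countable supports; it is not available for Theorem~\ref{theorem: n+1-D}, and this is precisely why Section~\ref{section: wedge} exists as a separate argument. (A secondary issue: even where supports exist, Theorem~\ref{theorem: nDWeakDelta} only applies for $\nu\geq\gamma$, so the roots $W(a\cap b)$ have size $<\gamma$, not finite, and your ``stabilize the trace on each $W$-slice'' step would need transfinite pigeonholes that $\sigma$-completeness does not supply.)

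What the paper does instead is replace the $\Delta$-system lemma by an \emph{abstract} analogue stated purely in terms of the compatibility relation: Lemma~\ref{lemma:delta} produces, for a $J$-indexed family $\langle p_\beta\rangle$, a finite union $q=\bigcup_{\gamma\in F}p_\gamma$ acting as a ``root condition'' such that for every $r\in[P_m]^{<\omega}$, compatibility (resp.\ incompatibility) of $r$ with $q$ reflects to $J^*$-many $p_\beta$. The failure of this reflection is what gets converted into an infinite sequence $\langle(p_i,q_i)\rangle$ contradicting $\sigma$-$w$-finite-c.c.\ --- so the contradiction you envisage at the very end of your argument does occur, but one dimension down and much earlier, as the engine of the root lemma rather than as a final conversion step. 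The proof then re-runs the architecture of Section~\ref{section: n+1Knaster} by induction on $n$: a preliminary pruning builds a tree of root conditions $q_a$ for $a\in Y\restriction i$ (Lemma~\ref{lemma: roots}); the pairwise-disjoint type is handled by a goodness/antichain argument; the type where $a$ and $b$ share the last coordinate requires passing to a generic ultrapower by $P(\kappa)/J$ and applying the induction hypothesis to $\langle j(\bar p)_{a^\frown\kappa}\rangle$ in $V[G]$; and the remaining types use the reflection properties of the $q_a$'s. Your sketch anticipates none of the generic-embedding machinery, which is not optional: it is how the $n$-dimensional case is invoked inside the $(n+1)$-dimensional one for the overlapping types. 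To repair your proposal you would need to abandon the finite-invariant reduction and prove a compatibility-reflection lemma of the above kind directly from Definition~\ref{definition: sigmawedgefinitecc}.
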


The organization of the paper is: 
\begin{enumerate}
\item in Section \ref{section: 2d}, we provide a proof for the 2-dimensional case of Theorem \ref{theorem: nDWeakDelta}. Though logically weaker than the result of the next section, we choose such exposition for the ease of understanding as it contains key ideas with less notational density;
\item in Section \ref{section: n+1Knaster}, we prove full Theorem \ref{theorem: nDWeakDelta} building on the previous section;
\item in Section \ref{section: Ramsey}, we give a proof of Theorem \ref{theorem: CohenRandom} and Ramsey-theoretic applications;
\item in Section \ref{section: wedge}, we prove Theorem \ref{theorem: n+1-D}.
\item finally in Section \ref{section: openquestions}, we conclude with some open questions.
\end{enumerate}

%\begin{lemma}\label{lemma: XORimpliesSigmaFinite}
%\begin{enumerate}
%\item If $P$ is $\sigma$-XOR-finite-c.c, then $P$ is $\sigma$-finite-c.c.
%\item If $P$ is $\sigma$-XOR-bounded-c.c, then $P$ is $\sigma$-bounded-c.c.
%\end{enumerate}
%
%\end{lemma}
%
%\begin{proof}
%Let $\bigcup_{k\in \omega} P_k$ witness the $\sigma$-XOR-finite-c.c of $P$. We want to show that each $P_k$ contains no infinite antichain. Suppose for the sake of contradiction that $\{r_n: n\in \omega\}\subset P_k$ is an infinite antichain. Consider $\langle (r_n, r_n): n\in \omega\rangle$. By the $\sigma$-XOR-finite-c.c-ness, there is $n<m\in \omega$, $r_n || r_m$ (since $r_n || r_n$), which is a contradiction. The second part has the same proof.
%\end{proof}
%
%
%Applying Ramsey theorem, we can get the following: 
%
%\begin{lemma}\label{lemma: XNORRamsey}
%If $P$ is $\sigma$-XOR-finite-c.c as witnessed by $\bigcup_{k\in \omega} P_k$, then for any $\langle (r_n, h_n): n\in \omega\rangle\subset P_k\times P_k$, 
%there exists $A\in [\omega]^{\aleph_0}$ such that either 
%	\begin{enumerate}
%	\item for all $n<m\in A$, $r_n || h_n$ and $r_n || h_m$ or 
%	\item for all $n<m\in A$, $r_n\perp h_n$ and $r_n \perp h_m$.
%	\end{enumerate}
%\end{lemma}
%
%

We finish the introduction with one preliminary lemma and some standard facts about generic elementary embeddings. 

\begin{lemma}\label{lemma: highernormality}
If $J$ is a uniform normal ideal on $\kappa$, then for $n\in \omega$, $J^n$ is normal in the following sense: for any $\langle D_{a}\in (J^n)^*: a\in [\kappa]^{<\omega}\rangle$, $D=_{def}\Delta_{a\in [\kappa]^{<\omega}} D_a =_{def} \{b\in [\kappa]^n: \forall a\in [\kappa]^{<\omega}, a\subset \min b, b\in D_a\}\in (J^n)^*$.
\end{lemma}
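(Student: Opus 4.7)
My plan is to proceed by induction on $n$, reducing the higher-dimensional case to the ordinary normality of $J$. For the base case $n=1$, identifying $[\kappa]^1$ with $\kappa$ so that $\min b = b$, the statement reduces to: for any $\langle D_a : a\in [\kappa]^{<\omega}\rangle \subseteq J^*$, the set $D=\{\beta<\kappa : \forall a\in [\beta]^{<\omega},\ \beta\in D_a\}$ lies in $J^*$. The key move is to bundle the family indexed by $[\kappa]^{<\omega}$ into a family indexed by $\kappa$ using $\kappa$-completeness: since $J$ is $\kappa$-complete (as every normal uniform ideal on $\kappa$ is) and $|[\alpha]^{<\omega}|<\kappa$, each $E_\alpha := \bigcap_{a\in [\alpha]^{<\omega}} D_a$ belongs to $J^*$. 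The ordinary normality of $J$ then gives $\Delta_{\alpha<\kappa} E_\alpha \in J^*$. For any limit $\beta$ in this diagonal intersection and any $a\in [\beta]^{<\omega}$, take $\alpha = \max(a)+1<\beta$ to conclude $\beta \in E_\alpha \subseteq D_a$. Intersecting with the $J^*$-set of limit ordinals (which is in $J^*$ because uniformity plus normality kills the successors via the predecessor function being regressive) completes the base case.

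For the inductive step, assume the conclusion for $n$ and fix $\langle D_a \in (J^{n+1})^* : a\in [\kappa]^{<\omega}\rangle$. By the recursive definition of $J^{n+1}$, each $F_a := \{\alpha<\kappa : (D_a)_\alpha \in (J^n)^*\}$ lies in $J^*$. Applying the (already-proved) base case to $\langle F_a : a \in [\kappa]^{<\omega}\rangle$ yields
\[
F := \Delta_{a\in[\kappa]^{<\omega}} F_a = \{\alpha<\kappa : \forall a\in[\alpha]^{<\omega},\ (D_a)_\alpha\in (J^n)^*\} \in J^*.
\]
Unpacking the definitions shows that for every $\alpha\in F$,
\[
(D)_\alpha \;=\; \bigcap_{a\in [\alpha]^{<\omega}} (D_a)_\alpha,
\]
an intersection of fewer than $\kappa$ many $(J^n)^*$-sets, hence in $(J^n)^*$ by the $\kappa$-completeness of $J^n$ (verified by a parallel induction from that of $J$). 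Therefore $\{\alpha : (D)_\alpha \in (J^n)^*\}\supseteq F$, which witnesses $D\in (J^{n+1})^*$.

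The main thing that needs verification is the identity $(D)_\alpha = \bigcap_{a\in [\alpha]^{<\omega}} (D_a)_\alpha$, which is precisely where the ``$a\subset \min b$'' clause in the definition earns its keep: when we slice at $\min b = \alpha$, the quantifier ``$\forall a$ with $a\subset \min b$'' collapses to ``$\forall a\in [\alpha]^{<\omega}$'', which matches exactly the index set of the diagonal intersection at the previous level produced by the base case. The only mild annoyance is the successor-ordinal edge case in the base case, dealt with by restricting to limit ordinals; at higher levels the slicing framework absorbs this automatically and no such restriction is needed.
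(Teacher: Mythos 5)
Your proof is correct, but it takes a genuinely different route from the paper's. The paper argues by contradiction in a single pass: supposing $D^c\in(J^n)^+$, it defines $f(b)$ to be the least $a\in[\min b]^{<\omega}$ with $b\notin D_a$, uses $\kappa$-completeness to stabilize $f$ on each slice $(D^c)_\alpha$ (so that $f$ factors through $\min b$), and then applies one-dimensional normality of $J$ to the resulting ``regressive'' function $g:X_0\to[\kappa]^{<\omega}$, $g(\alpha)\subset\alpha$, to obtain a positive set $Y$ disjoint from some $D_a$, contradicting $D_a\in(J^n)^*$. You instead induct on $n$ and construct the diagonal intersection directly: the $n=1$ case is a standard re-indexing of $[\kappa]^{<\omega}$-indexed families to $\kappa$-indexed ones via $\kappa$-completeness, followed by ordinary normality and a restriction to limits; the inductive step slices $D$ at each first coordinate $\alpha$, identifies $(D)_\alpha$ with the $<\kappa$-fold intersection $\bigcap_{a\in[\alpha]^{<\omega}}(D_a)_\alpha$, and invokes $\kappa$-completeness of $J^n$. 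Both arguments ultimately pass through the same two engines ($\kappa$-completeness of $J^n$, and one application of normality of $J$); yours makes the reduction to dimension $1$ explicit and constructive and avoids the contradiction, while the paper's is more compact. One small note for the write-up: the paper cites ``$\kappa$-completeness of $J$'' in its stabilization step, but (as in your argument) what is really needed is $\kappa$-completeness of $J^{n-1}$, which itself requires the small parallel induction you mention; it is good that you flagged this explicitly.
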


\begin{proof}
Since $J$ is uniform and normal, $J$ is $\kappa$-complete. Suppose $D^c\in (J^n)^+$ for the sake of contradiction. We define the following function $f$ whose domain is $D^c$ so that for any $b\in D^c$, $f(b)$ returns the least $a\in [\min b]^{<\omega}$ such that $b\not\in D_a$. By the $\kappa$-completeness of $J$, we can find $J^n$-positive $X\subset D^c$ such that for any $b,b'\in X$ with $\min b = \min b'$, then $f(b)=f(b')$. Let $X_0=\{\alpha\in \kappa: \exists c\in [\kappa]^{n-1}, (\alpha,c)\in X\} \in J^+$. Let $g$ be a function on $X_0$ such that $g(\alpha)=f(\alpha,c)$ for some $c$ such that $(\alpha,c)\in X$. This is well-defined by the preliminary pruning. By the normality of $J$, we can find $J$-positive $X_0'\subset X_0$ and some finite $a\in [\kappa]^{<\omega}$ such that $g\restriction X_0' \equiv \{a\}$. Let $Y=\{(\alpha,c): \alpha\in X_0', c\in (X)_\alpha\}$. Then $Y\in (J^n)^+$ and $f'' Y \equiv \{a\}$. By the definition of $f$, this means $Y\cap D_a=\emptyset$. This contradicts with the fact that $Y\in (J^n)^+$ and $D_a\in (J^n)^*$.
\end{proof}

The proof of the following facts can be found in \cite[Chapter 2]{ForemanChapter} and \cite[Chapter 16,17]{Kanamori}

\begin{fact}\label{fact: standard}
Let $\gamma\leq \nu<\kappa$ be cardinals.
Suppose $J$ is a $\nu$-saturated normal uniform ideal on $\kappa$ and $P$ is a $\gamma$-c.c forcing. Then in $V[G]$ where $G\subset P$ is generic over $V$: 
	\begin{enumerate}
	\item the ideal $\bar{J}$ generated by $J$ is uniform and normal;
	\item for any $n\in \omega$, $\bar{J}^n$ is generated by $J^n$, moreover, for any $p\in P$ and a $P$-name $\dot{X}$, such that $p\Vdash \dot{X}\in \bar{J}^n$, there exists $X^*\in J^n$ such that $p\Vdash \dot{X}\subset X^*$;
	\item $\bar{J}$ is $\nu$-saturated in $V[G]$ (see \cite[Theorem 17.1]{Kanamori});
	\item in a further forcing extension by $P(\kappa)/\bar{J}$, say $V[G*H]$, there exist a transitive class $M$ and an elementary embedding $j: V[G]\to M$ such that $\mathrm{crit}(j)=\kappa$ and $V[G*H]\models {}^{\kappa}M \subset M$;
	\end{enumerate}
\end{fact}

\section{$J^2$-weak-$\Delta$-system lemma}\label{section: 2d}

\begin{theorem}\label{theorem: 2DWeakDelta}
Let $J$ be a normal uniform $\gamma$-saturated ideal on $\kappa$ for some $\gamma<\kappa$. Then for $\nu\in [\gamma,\kappa)\cap cof(\geq \gamma)$, the $|\nu|$-$J^2$-weak-$\Delta$-system lemma holds.
\end{theorem}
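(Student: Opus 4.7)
The plan is to derive the lemma from the generic elementary embedding $j : V \to M$ that $\gamma$-saturation of $J$ provides (Fact~\ref{fact: standard}), and then pull back combinatorial information from $V[G]$ to $V$ using the $\gamma$-chain condition of $P(\kappa)/J$ that accompanies saturation.

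First I would establish a one-dimensional version: given $Z \in J^+$ and $\langle Z_\alpha \in [\kappa]^{<\nu} : \alpha \in Z\rangle$, there exist $J$-positive $Y \subset Z$ and $W \in [\kappa]^{<\nu}$ such that $Z_\alpha \cap Z_\beta \subset W$ for distinct $\alpha, \beta \in Y$. Forcing with $P(\kappa)/J$ below $Z$ yields $j : V \to M$ with $\kappa \in j(Z)$; the ``generic color'' $Z^* := j(\langle Z_\alpha\rangle)(\kappa)$ is a set in $M$ of size $<\nu$. Using the $\gamma$-c.c.\ of the forcing (and $\gamma \le \nu$), I would extract a ground-model $W \in [\kappa]^{<\nu}$ that is forced to cover $Z^* \cap \kappa$, and then define $Y := \{\alpha \in Z : Z_\alpha \cap \alpha \subset W\}$; the fact that $\kappa \in j(Y)$ is forced (because $j(W) \cap \kappa = W \supseteq Z^* \cap \kappa$) makes $Y$ $J$-positive. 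Elements of $Z_\alpha \cap Z_\beta$ below $\max(\alpha,\beta)$ are then automatically in $W$, and the ``tail'' part (elements above $\max(\alpha,\beta)$) is handled by a parallel argument in a second forcing step, iterating $P(\kappa)/J$ to constrain the portion $Z_\alpha \cap [\alpha,\kappa)$.

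For the full two-dimensional case, I split according to the three possible roots of $a \simeq b \in [\kappa]^2$: common minimum, common maximum, and disjoint. The common-minimum case follows by applying the one-dimensional version to each family $\langle X_{(\alpha,\beta)} : \beta \in (X)_\alpha\rangle$ for $\alpha \in X_0 := \{\alpha : (X)_\alpha \in J^+\}$, producing $W^{\min}(\{\alpha\})$ and $Y^{\min}_\alpha \subset (X)_\alpha$ in $J^+$. The common-maximum case is analogous along columns, exploiting that for normal $J$ the Fubini product $J^2$ is symmetric enough that $J$-positively many $\gamma$ satisfy $\{\alpha < \gamma : (\alpha,\gamma) \in X\} \in J^+$; this yields $W^{\max}(\{\gamma\})$, and I set $W(\{\delta\}) := W^{\min}(\{\delta\}) \cup W^{\max}(\{\delta\})$. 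For the disjoint-root case, I iterate the generic embedding: force $P(\kappa)/J$ twice to obtain $j_1, j_2$ and $k := j_2 \circ j_1$, with $(\kappa_1, \kappa_2) := (\kappa, j_1(\kappa)) \in k(X)$; the set $X^{**} := k(\langle X_a\rangle)(\kappa_1, \kappa_2)$ has size $<\nu$ in the extension, and its portion below $\kappa_1$ is covered by a ground-model $W(\emptyset) \in [\kappa]^{<\nu}$ via the $\gamma$-c.c.\ of the two-step iteration (which is itself $\gamma$-c.c.\ by Fact~\ref{fact: standard}). The final $Y$ is assembled by a diagonal intersection, with the normality of $J^2$ from Lemma~\ref{lemma: highernormality} ensuring $J^2$-positivity.

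The main obstacle I expect is twofold. First, controlling the ``tail'' contribution to $Z_\alpha \cap Z_\beta$ in the one-dimensional building block requires a second forcing step or a pre-pruning via normality, so that the produced $Y$ enforces the inclusion globally and not only below $\min(\alpha, \beta)$. Second, the cardinal-arithmetic pullback yields a covering set of size at most $\theta \cdot \gamma$ where $\theta < \nu$, which is strictly $<\nu$ in the regular case $\nu > \gamma$ but demands extra care at the boundary $\nu = \gamma$; this likely has to be addressed by refining the covering argument using the fine structure of the generic ultrapower rather than the crude $\gamma$-c.c.\ bound.
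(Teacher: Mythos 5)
Your one-dimensional building block and the common-minimum case are essentially Lemma~\ref{lemma: 1DShrinking} of the paper and the preliminary pruning that follows, and are on the right track. The concern you raise about the boundary $\nu=\gamma$ is a real subtlety but is handled in the paper by the same $\gamma$-c.c.\ covering argument (with $\nu$ regular one gets a cover of size $\theta\cdot\gamma<\nu$); it is not where the argument breaks.

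The genuine gap is the common-maximum case. You assert that ``for normal $J$ the Fubini product $J^2$ is symmetric enough that $J$-positively many $\gamma$ satisfy $\{\alpha<\gamma:(\alpha,\gamma)\in X\}\in J^+$.'' Since $J$ is uniform, $\{\alpha<\gamma:(\alpha,\gamma)\in X\}$ is a subset of the bounded set $\gamma<\kappa$ and is therefore \emph{always} in $J$; there are no such $\gamma$, and the Fubini product of a uniform ideal is genuinely asymmetric. Consequently there is no ``transpose'' $J$-positive column structure to which the 1D lemma can be applied. The paper's Proposition~\ref{proposition:2-DOverlapping} deals with this case by a reflection argument that is not a transposition: force with $P(\kappa)/J$ to obtain $j:V\to M$ with $\mathrm{crit}(j)=\kappa$, arrange $B=\{\alpha:\kappa\in j((X)_\alpha)\}\in\bar J^+$, and apply the 1D shrinking lemma \emph{in $V[G]$} to the family $\{j(\vec E)_{\alpha,\kappa}:\alpha\in B\}$ indexed by the \emph{first} coordinates, with $\kappa$ itself playing the role of the common last coordinate; a pullback then yields the function $g$ on the roots. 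Relatedly, your plan for the disjoint-root case by a two-step iteration would only visibly control pairs $a<b$ with $\max a<\min b$; the interleaved configurations $\alpha_0<\alpha_1<\beta_0<\beta_1$ (and the symmetric one) need the extra diagonal pruning of Proposition~\ref{proposition:2-DNonOverlapping}, which requires no generic embedding at all.
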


Fix the ideal $J$ and $\nu\in [\gamma, \kappa)\cap cof(\geq \gamma)$ as in the hypothesis of Theorem \ref{theorem: 2DWeakDelta}.

\begin{lemma}\label{lemma: 1DShrinking}
For any $X\in J^+$ and $\vec{E}=\langle E_\alpha\in [\kappa]^{<\nu}: \alpha\in X\rangle$, there exist $Y\in [\kappa]^{<\nu}$ and $C\in J^*$ such that for any $\alpha<\beta\in X\cap C$, $\sup E_\alpha < \beta$ and $E_\alpha\cap \alpha \subset Y$, in particular, $E_\alpha\cap E_\beta \subset Y$.
\end{lemma}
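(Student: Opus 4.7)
The plan is to handle the two requirements separately and then intersect the resulting $J^*$-sets. The ``bounding'' condition $\sup E_\alpha<\beta$ follows directly from normality: since each $E_\alpha$ is bounded in $\kappa$ (because $|E_\alpha|<\nu<\kappa$), letting $g(\alpha):=\sup E_\alpha+1$, the set of closure points
\[
C_0:=\{\beta<\kappa:\forall\alpha<\beta,\ g(\alpha)<\beta\}
\]
is a club, hence in $J^*$. For $\alpha<\beta$ in $X\cap C_0$, $\sup E_\alpha<\beta$. It remains to produce the root $Y$ and a $J^*$-set on which $E_\alpha\cap\alpha\subset Y$.

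For the root I would set
\[
Y:=\{\beta<\kappa:A_\beta\in J^+\},\quad\text{where } A_\beta:=\{\alpha\in X:\alpha>\beta,\ \beta\in E_\alpha\},
\]
the set of ``heavy'' points. Let $B:=\{\alpha\in X:E_\alpha\cap\alpha\not\subset Y\}$. A standard Fodor argument gives $B\in J$: were $B\in J^+$, the map $\alpha\mapsto\min(E_\alpha\cap\alpha\setminus Y)$ would be regressive on $B$, so by normality of $J$ it would be constant $\equiv\beta_0$ on a $J^+$-set $B'\subset B$; but then $B'\subset A_{\beta_0}$ would witness $\beta_0\in Y$, contradicting $\beta_0\notin Y$. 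With $B\in J$, the set $C:=C_0\setminus B\in J^*$ satisfies both conclusions on $X\cap C$.

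The main obstacle is showing $|Y|<\nu$; this is where $\gamma$-saturation enters. I would argue via the generic ultrapower. Force with $P(\kappa)/J$ below $X$, yielding $j:V\to M$ with $\mathrm{crit}(j)=\kappa$; since $X\in G$, $\kappa\in j(X)$ and $E^*:=j(\vec E)_\kappa\in M$ has $M$-cardinality $<j(\nu)=\nu$. By \L o\'s's theorem for generic ultrapowers, $\beta\in Y^*:=E^*\cap\kappa$ iff $A_\beta\in G$, so $Y$ is exactly the set of ordinals which some $p\in J^+$ forces into $\dot Y^*$, and $\Vdash|\dot Y^*|<\nu$. Using that $P(\kappa)/J$ is $\gamma$-c.c., fix a maximal antichain $\mathcal A$ below $X$ with each $q\in\mathcal A$ deciding $|\dot Y^*|=\mu_q<\nu$; enumerate $\dot Y^*$ below $q$ by names $\dot y^q_i$ ($i<\mu_q$), each decided by an antichain of size $<\gamma$, and let $V^q_i$ be its set of possible values. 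Every $\beta\in Y$ is of the form $\dot y^q_i$ for some $q\in\mathcal A$ compatible with $A_\beta$, so $Y\subset\bigcup_{q,i}V^q_i$ and $|Y|\leq|\mathcal A|\cdot\sup_q(\mu_q\cdot\gamma)$.

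The hardest part is verifying that this last bound is strictly $<\nu$, not just $\leq\nu$. For $\nu$ regular with $\gamma<\nu$, both factors are $<\nu$, so the product is $<\nu$. In the edge case $\gamma=\nu$ one uses the regularity of $\gamma$ to conclude that a union of $<\gamma$ many sets of size $<\gamma$ still has size $<\gamma$. Singular $\nu$ with $\mathrm{cf}(\nu)<\gamma$ would require extra care (e.g., replacing the $\dot y^q_i$-enumeration with a more uniform covering), but the above suffices under the regularity conditions that are present in the intended applications.
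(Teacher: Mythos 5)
Your proof is correct and takes the paper's approach in its essentials: both pass to a generic ultrapower $j:V\to M$ by $P(\kappa)/J$ below $X$, and both exploit the $\gamma$-c.c.\ of $P(\kappa)/J$ to produce a ground-model $Y\in[\kappa]^{<\nu}$ with $\Vdash_{P(\kappa)/J}\dot E^*\cap\kappa\subset Y$. You diverge in a small but pleasant way: rather than fixing an arbitrary such cover $Y$ and then reading the inclusion $E_\alpha\cap\alpha\subset Y$ (for $J^*$-many $\alpha\in X$) off \L o\'s's theorem and elementarity, as the paper does, you define $Y=\{\beta:A_\beta\in J^+\}$ explicitly in $V$ and obtain the inclusion from a direct pressing-down argument using normality of $J$. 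The two routes coincide, since below $X$ the Boolean value $[[\beta\in\dot E^*\cap\kappa]]$ is exactly $[A_\beta]$, so your $Y$ is precisely the minimal cover that the paper's one-line $\gamma$-c.c.\ claim produces. You are also right to flag the cardinal arithmetic hidden in that claim: the union of fewer than $\gamma$-many sets, each of size $<\nu$, is comfortably of size $<\nu$ when $\nu$ is regular or $\mathrm{cf}(\nu)\geq\gamma$ (which is the situation in the paper's applications), but does require extra care for $\nu$ singular of small cofinality --- a subtlety the paper passes over silently.
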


\begin{proof}
Let $G\subset P(\kappa)/J$ be generic over $V$ containing $X$. Let $j: V\to M\simeq \mathrm{Ult}(V,G)$ be the generic ultrapower embedding. 
Let $E^*= j(\vec{E})(\kappa)$. Then $E^*\cap \kappa$ is a $<\nu$-subset of $\kappa$. Since $P(\kappa)/J$ satisfies $\gamma$-c.c and $\gamma\leq cf(\nu)$, there is some $Y\in V \cap [\kappa]^{<\nu}$ such that $\Vdash_{P(\kappa)/J} \dot{E}^*\cap \kappa\subset Y$. By the elementarity, we know that $\{\alpha<\kappa: E_\alpha\cap \alpha \subset Y\}\in G$. Since $G$ is chosen arbitrarily containing $X$, we know there must exist some $C'\in J^*$ such that $C'\cap X\subset \{\alpha<\kappa: E_\alpha\cap \alpha \subset Y\}$. Let $C=\{\alpha\in C': \forall \beta<\alpha, \sup X_\beta <\alpha\}$ and it is easy to see that $C\in J^*$, by the normality of $J$, is as desired.
\end{proof}

Let us fix $X\in (J^2)^+$ and $\langle E_a \in [\kappa]^{<\nu}: a\in X\rangle$. Applying Lemma \ref{lemma: 1DShrinking} repeatedly on both coordinates to shrink $X$ if necessary, we may assume there is $W: [\kappa]^{\leq 1}\to [\kappa]^{<\nu}$ such that for any $a,b\in X$, 
	\begin{enumerate}
	\item if $a_0=b_0$ and $a_1<b_1$, then $E_a\cap a_1\subset W(a_0)$, $E_b\cap b_1\subset W(b_0)$ and $\sup E_a < b_1$. In particular, $E_a\cap E_b\subset W(a\cap b)$.
	\item for $\alpha<\beta \in X_0$, $W(\alpha)\cap \alpha\subset W(\emptyset)$, $W(\beta)\cap \beta\subset W(\emptyset)$ and $\sup W(\alpha)<\beta$. In particular, $W(\alpha)\cap W(\beta)\subset W(\emptyset)$.
	\end{enumerate}
	Without loss of generality, we may assume any $B\in range(W)$ is of limit order type.	All that is left is to take care of pairs of the following types: 
	\begin{enumerate}
	\item $a,b$ such that $a\cap b =\emptyset$,
	\item $a,b$ such that $a_1 = b_1$.
	\end{enumerate}

\begin{proposition}\label{proposition:2-DNonOverlapping}
There exists a $J^2$-positive $Y\subset X$ such that for any $a,b\in Y$ with $a\cap b =\emptyset$, we have that $E_a \cap E_b \subset W(\emptyset)$.
\end{proposition}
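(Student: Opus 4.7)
The approach is via iterated generic $J^2$-ultrapowers. First, force with $P([\kappa]^2)/J^2$ above $X$, obtaining a generic $G$ and ultrapower $j : V \to M$ whose generic pair is $(\kappa, \kappa_1) = (\kappa, j_0(\kappa)) \in j(X)$ in the standard two-step factorization. Setting $E^* := j(\vec{E})(\kappa, \kappa_1)$, the elementary image of preprocessing item (1) yields $E^* \cap \kappa_1 \subset j(W)(\kappa)$, and the elementary image of item (2) --- using that $\kappa \in j(X_0)$ and any $\alpha \in X_0$ provides a witness below $\kappa$ in $j(X_0)$ --- gives $j(W)(\kappa) \cap \kappa \subset j(W)(\emptyset) = W(\emptyset)$. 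Hence $E^* \cap \kappa \subset W(\emptyset)$ in $V[G]$.

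Next, iterate: in $V[G]$, let $G'$ be generic for $j(P([\kappa]^2)/J^2)$ above $j(X)$ with ultrapower $j' : M \to M'$, and set $j'' := j' \circ j$. The resulting second generic pair $b^* = (j(\kappa), j'_0(j(\kappa)))$ lies in $j''(X)$ and is disjoint from $a^* := (\kappa, \kappa_1)$ since every coordinate of $a^*$ lies below $j(\kappa) = \mathrm{crit}(j')$. Because $E^*$ is a bounded subset of $\mathrm{crit}(j')$ of size $<\nu$, $j'$ is continuous on it and $E^*_{a^*} := j''(\vec{E})(a^*) = j'(E^*) = E^*$, so $E^*_{a^*} \subset j(\kappa)$. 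Applying the first paragraph's conclusion inside $M$ via elementarity (with $j'$ playing the role of the $J^2$-ultrapower) gives $E^*_{b^*} := j''(\vec{E})(b^*)$ satisfying $E^*_{b^*} \cap j(\kappa) \subset j(W)(\emptyset) = W(\emptyset)$. Combining, $E^*_{a^*} \cap E^*_{b^*} \subset W(\emptyset)$ in $V[G][G']$.

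The iterated forcing is equivalent to $P([\kappa]^4)/J^4$ above $X_4 := \{(\alpha_0, \alpha_1, \alpha_2, \alpha_3) \in [\kappa]^4 : (\alpha_0, \alpha_1), (\alpha_2, \alpha_3) \in X\}$, so the forcing theorem yields that the set of bad 4-tuples for the separated grouping is $J^4$-null. The two interleaved groupings $(\kappa_0, \kappa_2), (\kappa_1, \kappa_3)$ and $(\kappa_0, \kappa_3), (\kappa_1, \kappa_2)$ are handled by rerunning the argument on analogously defined $J^4$-positive subsets of $[\kappa]^4$, which remain positive after a preliminary refinement such as $X \subset X_0 \times X_0$; the bounded-support telescoping through the four-step factorization again forces $E^*_a \cap E^*_b \subset W(\emptyset)$ in each case. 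Fubini then gives $B_a := \{b \in X : a \cap b = \emptyset, \, E_a \cap E_b \not\subset W(\emptyset)\} \in J^2$ for $J^2$-almost every $a \in X$, and a diagonal intersection of the complements $[\kappa]^2 \setminus B_a$ via Lemma~\ref{lemma: highernormality} produces the required $Y \subset X$. The hard part is the final diagonalization: Lemma~\ref{lemma: highernormality} naturally controls only pairs with $a \subset \min b$, so one must carefully combine the three $J^4$-null sets (one per disjoint shape) with the diagonal indexing to ensure that interleaved and enclosed disjoint pairs are also excluded from $Y$.
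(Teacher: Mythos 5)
There is a genuine gap. Your first two paragraphs correctly establish $E^*_{a^*}\cap E^*_{b^*}\subset W(\emptyset)$ for the \emph{separated} generic quadruple, but this is the easy case: the paper dispenses with $a<b$ in a single sentence by noting that on a club of $\beta\in X_0$ one has $\sup E_a<\beta$ for all $a<\beta$, so $E_a\subset\min b$ and then $E_a\cap E_b\subset E_b\cap b_0\subset W(b_0)\cap b_0\subset W(\emptyset)$. The actual content of the proposition lies in the two interleaved configurations $\alpha_1<\alpha_0<\beta_0<\beta_1$ and $\alpha_0<\alpha_1<\beta_0<\beta_1$, and these are exactly what your third paragraph defers. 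Saying one handles them by \emph{rerunning the argument on analogously defined $J^4$-positive subsets} is not enough: a two-step iteration of $P([\kappa]^2)/J^2$ naturally produces the separated grouping $(\kappa_0,\kappa_1),(\kappa_2,\kappa_3)$, whereas the interleaved groupings require a four-fold $J$-iteration together with extra positivity conditions, and the bounding step that drives your second paragraph---$E^*_{a^*}$ lying entirely below the next critical point---no longer comes for free. For $a^*=(\kappa_0,\kappa_2)$ there is no a priori reason that $\sup j(\vec E)(\kappa_0,\kappa_2)<\kappa_3$ unless one arranges that $(\kappa_0,\beta)\in j(X)$ for some $\beta\in(\kappa_2,\kappa_3]$, an additional constraint on the $J^4$-positive set below which one forces. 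You also assert without proof that the two-step iteration is forcing-equivalent to $P([\kappa]^4)/J^4$ above $X_4$; the second factor is a quotient of $P([j(\kappa)]^2)$ in $V[G]$, not of $P([\kappa]^2)$ in $V$, so a genuine reduction is required. Finally, you explicitly flag the diagonalization as \emph{the hard part} and leave it open, yet the diagonalization is precisely where one must reconcile the three $J^4$-null sets with the interleaving, i.e.\ it \emph{is} the proposition.

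For comparison, the paper's proof is entirely elementary and uses no ultrapowers at this step: after discarding the increasing case as above, it uses the normality of $J$ to shrink each fiber $(X)_\alpha$ so that for $\beta\in(Y)_\alpha$ one has $\min\bigl(E_{\{\alpha,\beta\}}\setminus W(\alpha)\bigr)\ge\sup\bigcup_{\gamma<\beta}W(\gamma)$; the conclusion for both interleaved configurations is then a short case analysis reading off the preprocessing of $W$. If you want to pursue the ultrapower route, you would need to (i) establish the iteration--$J^4$ equivalence, (ii) handle each of the three disjoint shapes with its own iterated factorization and boundedness argument, and (iii) do the final diagonalization, none of which are present in the current write-up.
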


\begin{proof}
Note that $\{\beta\in X_0: \forall a<\beta, \sup E_a <\beta\}$ is in $J^*\restriction X_0$. As a result, we may without loss of generality assume that for any $a<b\in X$, $E_a\cap E_b \subset W(\emptyset)$. 

Observe that for any $H\in [\kappa]^{<\nu}$ and any $\alpha\in X_0$, the collection $\{\beta\in (X)_\alpha: H\cap E_{\{\alpha,\beta\}}\subset W(\alpha)\}\in J^*\restriction (X)_\alpha$. By the normality of $J$, we define $J^2$-positive $Y\subset X$ such that for each $\alpha\in X_0=Y_0$, $$(Y)_\alpha=_{def}$$$$\{\beta\in (X)_\alpha: \forall a<\beta, \sup E_a < \beta,$$$$ \text{if $\min (E_{\{\alpha,\beta\}}-W(\alpha))\neq \emptyset$, then } \min (E_{\{\alpha,\beta\}}-W(\alpha))\geq\sup \bigcup_{\gamma<\beta} W(\gamma)\}$$$$\in J^*\restriction (X)_\alpha.$$
Let us check that $Y$ works. It is only left to verify $a=\{\alpha_0, \beta_0\}$ and $b=\{\alpha_1,\beta_1\}$ such that $\beta_0>\alpha_1$ or $\beta_1>\alpha_0$. For definiteness, assume $\beta_1>\alpha_0$. The proof for the other case is similar. The possible configurations are $\alpha_1<\alpha_0<\beta_0<\beta_1$ or $\alpha_0<\alpha_1<\beta_0<\beta_1$. In either case, we have that $E_a\cap E_b \subset E_b\cap \beta_1\subset W(\alpha_1)$. On the other hand, $\min (E_a - W(\alpha_0))\geq \sup W(\alpha_1)$. As a result, $E_a\cap E_b\subset W(\alpha_0)\cap W(\alpha_1)\subset W(\emptyset)$.\end{proof}

\begin{proposition}\label{proposition:2-DOverlapping}
There exist a $J^2$-positive $Y\subset X$ and $g: \kappa\to [\kappa]^{<\nu}$ such that for any $a,b\in Y$ with $a_1=b_1$, we have that $E_a \cap E_b \subset g(a\cap b)$.
\end{proposition}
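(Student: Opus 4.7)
The plan is to use the generic ultrapower $j : V \to M$ induced by forcing with $P(\kappa)/J$ below $X_0$, so that $\kappa = [\mathrm{id}]_G$ is the critical point and $V[G] \models {}^\kappa M \subseteq M$ by Fact \ref{fact: standard}. For each $\alpha < \kappa$ with $(X)_\alpha \in G$, equivalently $(\alpha, \kappa) \in j(X)$, I would set $E^*_\alpha := j(E)_{(\alpha, \kappa)} \in M \cap [j(\kappa)]^{<\nu}$; informally this represents $E_{(\alpha,\beta)}$ at the generic top coordinate $\beta = \kappa$. From the pre-shrinking $E_{(\alpha, \beta)} \cap \beta \subseteq W(\alpha)$ and the ultrapower membership criterion, one obtains $E^*_\alpha \cap \kappa \subseteq W(\alpha)$: any $\xi < \kappa$ with $\xi \notin W(\alpha)$ has $\{\beta : \xi \in E_{(\alpha, \beta)}\} \subseteq [0, \xi]$, which is $J$-small. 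Therefore the ``below-$\kappa$'' part of each $E^*_{\alpha_0} \cap E^*_{\alpha_1}$ already lies in $W(\alpha_0) \cap W(\alpha_1) \subseteq W(\emptyset)$; the fresh work concerns only the ``above-$\kappa$'' part.

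For that, I would apply Lemma \ref{lemma: 1DShrinking} inside $V[G]$ (interpreted with the lifted ideal $\bar J$, which remains normal, uniform, and $\nu$-saturated by Fact \ref{fact: standard}) to the family $\{E^*_\alpha : (\alpha, \kappa) \in j(X)\}$, extracting a single $<\nu$-sized set $\tilde g \subseteq j(\kappa)$ that bounds pairwise intersections on a $\bar J$-positive subfamily. Using that each ordinal in $[\kappa, j(\kappa))$ is represented as $[h]_G$ for some ground-model $h : \kappa \to \kappa$, combined with the $\gamma$-c.c.\ of $P(\kappa)/J$ and $\gamma \leq \nu$, the name for $\tilde g$ can be realized as $j(g)(\kappa)$ for a ground-model $g : \kappa \to [\kappa]^{<\nu}$ (after adjoining $W(\emptyset)$ to each $g(\beta)$ to absorb the below-$\kappa$ piece). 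Elementarity applied to the containment $j(E)_{(\alpha_0, \kappa)} \cap j(E)_{(\alpha_1, \kappa)} \subseteq j(g)(\kappa)$ then yields back in $V$ the statement that for $J^*$-almost every $\beta$ and a $J^+$ set of pairs $(\alpha_0, \alpha_1)$, $E_{(\alpha_0, \beta)} \cap E_{(\alpha_1, \beta)} \subseteq g(\beta)$.

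Finally, the higher normality of $J^2$ (Lemma \ref{lemma: highernormality}) lets us diagonalize across these pointwise conclusions and produce a single $J^2$-positive $Y \subseteq X$ witnessing the overlapping case. The main obstacle I foresee is the realization step: the $V[G]$-set $\tilde g$ and the choice of ``good pairs'' both depend on the generic $G$, and one must coordinate the $\gamma$-c.c.\ extraction with the Fubini structure of $J^2$ so that a single $V$-function $g$, depending only on $\beta$ and not on the pair $(\alpha_0, \alpha_1)$, works simultaneously for all the required pairs. A closely related subtlety is verifying the $\bar J$-positivity of $\{\alpha < \kappa : (X)_\alpha \in G\}$ in $V[G]$, which may require shrinking $G$ further by an appropriate density argument.
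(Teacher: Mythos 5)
Your proposal is correct and follows essentially the same route as the paper: pass to the generic ultrapower, form $B=\{\alpha:\kappa\in j((X)_\alpha)\}$ (arranged to be $\bar J$-positive by choosing the generic suitably), apply Lemma \ref{lemma: 1DShrinking} in $V[G]$ to the sets $j(\vec E)_{\alpha,\kappa}$, represent the resulting bound as $j(g)(\kappa)$ using ${}^{\kappa}M\subseteq M$, and pull back to $V$ with a normality diagonalization over the sets $Z_\alpha$ forcing $\alpha\in\dot B$. The two ``obstacles'' you flag are exactly the points the paper resolves in the standard way, so no gap remains.
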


\begin{proof}
First we claim that there is a condition $A\in P(\kappa)/J$ forcing that $B=\{\alpha: \kappa\in j((X)_\alpha)\}\in \bar{J}^+$. Suppose not for the sake of contradiction. By the fact that $P(\kappa)/J$ is $\gamma$-c.c, we can find $\langle B_i\in J: i<\gamma'\rangle$ for some $\gamma'<\gamma$ such that $\Vdash_{P(\kappa)/J} B\subset \bigcup_{i<\gamma'} B_i$. Since $J$ is $\kappa$-complete, we know $B^*=\bigcup_{i<\gamma'} B_i\in I$. Since $X_0\in J^+$, we can find $\alpha\in X_0\cap (B^*)^c$. However, the fact that $\alpha\in X_0$ implies that $(X)_\alpha\Vdash \alpha\in B$ and the fact that $\alpha\in (B^*)^c$ implies that $\Vdash \alpha\not\in B$, which is a contradiction.

Find some generic extension $G\subset P(\kappa)/J$ containing $A$ so that $B\in \bar{J}^+$, where $j: V\to M\simeq \mathrm{Ult}(V,G)$ is the generic elementary embedding satisfying that 
\begin{itemize}
\item $M$ is a transitive class,
\item $V[G]\models {}^\kappa M\subset M$.
\end{itemize}
Such generic extension exists by the $\gamma$-c.c-ness of $P(\kappa)/J$ (see \cite[Propositions 2.9 and 2.14]{ForemanChapter} for more details).

By Fact \ref{fact: standard}, $\bar{J}$ is $\gamma$-saturated in $V[G]$. Apply Lemma \ref{lemma: 1DShrinking} to $\{j(\vec{E})_{\alpha,\kappa}=_{def} E^*_{\alpha,\kappa}: \alpha\in B\}$ and $\bar{J}$ in $V[G]$, we can find some $W\in [j(\kappa)]^{<\nu}$ and $C\in J^*$ such that for all $\alpha, \beta\in C\cap B$, $E^*_{\alpha,\kappa}\cap E^*_{\beta,\kappa}\subset W$. Since $V[G]\models {}^\kappa M\subset M$, all of the sets found above are members of $M$. In particular, there is some $g\in V$ with $g: \kappa\to [\kappa]^{<\nu}$ such that $W=j(g)(\kappa)$. For each $\alpha\in C\cap X_0$, find some $J$-positive $Z_\alpha\subset (X)_\alpha$ forcing that $\alpha\in \dot{B}$ if we can. Otherwise $Z_\alpha$ is undefined.

	\begin{claim}
	$Z'=_{def}\{\alpha\in X_0: Z_\alpha \text{ is defined}\}\in J^+$.
	\end{claim}
	\begin{proof}[Proof of the Claim]
	Otherwise, there is $D\in J^*$ such that for any $\alpha\in D\cap X_0$, $(X)_\alpha\Vdash \alpha\not \in \dot{B}$. In $V[G]$, find $\alpha\in D\cap B$. Since $\kappa\in j((X)_\alpha)$, we have $(X)_\alpha\in G$, hence $\alpha\not\in B$, which is a contradiction.
	\end{proof}
	Finally, let $Z\subset X$ be the $J^2$-positive set $\{(\alpha,\alpha'): \alpha\in Z'\cap C, \alpha'\in Z_\alpha\}$. For any $\alpha<\beta\in Z_0$, if $(Z)_\alpha\cap (Z)_\beta$ is in $J^+$, then it forces that $\alpha,\beta\in C\cap \dot{B}$, in particular, $E^*_{\alpha,\kappa}\cap E^*_{\beta,\kappa}\subset j(g)(\kappa)$. As a result, there exists $C_{\alpha,\beta}\in J^*$ such that either $C_{\alpha,\beta}\cap (Z)_\alpha\cap (Z)_\beta =\emptyset$ or for any $\eta\in C_{\alpha,\beta}\cap (Z)_\alpha\cap (Z)_\beta \in J^+$, it is the case that $E_{\alpha, \eta}\cap E_{\beta, \eta}\subset g(\eta)$. Finally, let $Y=\{(\alpha,\alpha')\in Z: \alpha'\in \bigcap_{\delta<\alpha}C_{\delta,\alpha}\}$. It is easy to see $Y$ and $g$ are as desired.

\end{proof}

\begin{proof}[Proof of Theorem \ref{theorem: 2DWeakDelta}]
Apply Lemma \ref{lemma: 1DShrinking}, Propositions \ref{proposition:2-DNonOverlapping} and \ref{proposition:2-DOverlapping} successively. The final function as desired will be $W^*: [\kappa]^{\leq 1} \to [\kappa]^{<\nu}$ such that $W^*(\emptyset)=W(\emptyset)$ and $W^*(\beta)=W(\beta) \cup g(\beta)$, where $W$ and $g$ are as produced in the proofs above.
\end{proof}

\section{$J^{n+1}$-weak-$\Delta$-system lemma}\label{section: n+1Knaster}
We prove the general case in this section, by induction on $n\in \omega$. We restate Theorem \ref{theorem: nDWeakDelta} for the convenience of the reader.
\begin{theorem*}
Let $J$ be a normal uniform $\gamma$-saturated ideal on $\kappa$ for some $\gamma<\kappa$. Then for $\nu\in [\gamma,\kappa)\cap cof(\geq \gamma)$, the $|\nu|$-$J^{n+1}$-weak-$\Delta$-system lemma holds where $n\in \omega$.
\end{theorem*}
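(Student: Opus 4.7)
The plan is to proceed by induction on $n$, taking the base case $n=1$ from Theorem~\ref{theorem: 2DWeakDelta} (with the trivial case $n = 0$ reducing directly to Lemma~\ref{lemma: 1DShrinking}, since all singletons are pairwise $\simeq$). Fix $X \in (J^{n+2})^+$ and $\langle E_a : a \in X \rangle$. Pairs $a \simeq b$ in $[\kappa]^{n+2}$ are classified by their matching set $S(a,b) = \{i : a_i = b_i\} \subseteq \{0, 1, \ldots, n+1\}$, and the goal is to produce a single function $W \colon [\kappa]^{\leq n+1} \to [\kappa]^{<\nu}$ and a $J^{n+2}$-positive $Y \subseteq X$ bounding $E_a \cap E_b$ uniformly across all such pairs. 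I would follow the three-phase template of the 2D proof, extending each phase to accommodate the richer family of types.

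In Phase 1, I would handle types with $0 \in S$ (that is, $a_0 = b_0$) by applying the inductive hypothesis to each slice $(X)_\alpha \in (J^{n+1})^+$ with the sets $\langle E_{(\alpha, \vec{b})} : \vec{b} \in (X)_\alpha \rangle$, obtaining local data $(W^\alpha, Y^\alpha)$. Lemma~\ref{lemma: 1DShrinking} applied to the families $\langle W^\alpha(c) : \alpha \in X_0 \rangle$ (for various $c$), together with the higher-dimensional normality of Lemma~\ref{lemma: highernormality}, aligns these into a global function $W_1 \colon [\kappa]^{\leq n+1} \to [\kappa]^{<\nu}$ whose value on a set $c$ with $\min c = \alpha$ encodes $W^\alpha(c \setminus \{\alpha\})$ and whose value at $\emptyset$ absorbs the parts of the $W^\alpha$'s range lying below $\alpha$; the $Y^\alpha$'s assemble into a $J^{n+2}$-positive $X_1 \subseteq X$. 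In Phase 2, mirroring Proposition~\ref{proposition:2-DNonOverlapping}, I would shrink further so that disjoint pairs $a \cap b = \emptyset$ satisfy $E_a \cap E_b \subseteq W_1(\emptyset)$, via a normality argument leveraging the control already established.

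Phase 3 is the main challenge, handling the remaining types with $0 \notin S$ and $S \neq \emptyset$. Following Proposition~\ref{proposition:2-DOverlapping}, I would take a generic ultrapower $j \colon V \to M$ from $G \subseteq P(\kappa)/J$ with $\mathrm{crit}(j) = \kappa$. For each $k \in \{1, \ldots, n+1\}$, the types $S$ with $\min S = k$ would be handled by analyzing pairs in $j(X)$ whose $k$-th coordinate equals the generic point $\kappa$; applying a dimension-reduced form of the inductive hypothesis (in $V[G]$ using that $\bar{J}$ is $\nu$-saturated by Fact~\ref{fact: standard}, or equivalently inside $M$ by elementarity) to the appropriately-indexed family of sets $j(E)_a$ produces a function $W^{(k)} \in M$, whose representation $W^{(k)} = j(g^{(k)})(\kappa)$ for some $g^{(k)} \in V$ reflects the desired bound back to $V$. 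The main obstacle is exactly this phase: whereas the 2D proof handles only the single type $S = \{1\}$, in dimension $n+2$ there are $2^{n+1} - 1$ types to cover, and the per-$k$ generic-embedding arguments must be organized so that the representing functions $g^{(k)}$ feed coherently into one global $W$ — either by iterating the ultrapower (once per matching position) with lower-dimensional inductive hypotheses applied in successive extensions, or by a single-embedding formulation capturing all positions at once. Lemma~\ref{lemma: highernormality} remains essential throughout for combining the various positive subsets into a final $J^{n+2}$-positive $Y$.
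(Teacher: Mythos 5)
Your overall architecture (induction on dimension, a preliminary alignment phase, a normality argument for disjoint pairs, and a generic-ultrapower step for overlapping pairs) matches the paper's, and your Phases 1 and 2 are workable. The genuine gap is Phase 3, which you yourself flag as unresolved: the plan to treat each type with $0\notin S\neq\emptyset$ by making the $k$-th coordinate the generic point $\kappa$ does not reduce to the inductive hypothesis. If $0<k<n+1$ and $a_k=b_k=\kappa$, the coordinates of $a$ and $b$ above position $k$ are ordinals in the interval $(\kappa,j(\kappa))$, so the family $\{\,j(\vec E)_a : a_k=\kappa\,\}$ is not indexed by a $\bar J^m$-positive subset of $[\kappa]^m$ for any $m$, and neither the induction hypothesis in $V[G]$ nor elementarity inside $M$ hands you a controlling function representable as $j(g)(\kappa)$ that can be pulled back to $V$. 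Iterating ultrapowers, your alternative suggestion, raises its own unaddressed issues (preservation of saturation and normality through the iteration, and coherence of the successive representing functions). So as written the argument does not close.

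The paper resolves exactly this difficulty differently, and uses the generic embedding only once, for the single type $a_n=b_n$ (Lemma \ref{lemma:lastoverlapDelta}), where everything below the generic point is an honest $n$-tuple and the induction hypothesis applies cleanly in $V[G]$. All remaining overlapping types are handled purely combinatorially. The key device, absent from your proposal, is a preliminary pruning (Lemma \ref{lemma: rootsDelta}) that builds an enlarged, tree-indexed family $\langle F_a : a\in Y\restriction i,\ i\leq n+1\rangle$ with $E_a\subseteq F_a$, with $F_{a'}\subseteq F_a$ for $a'\sqsubseteq a$, and with the separation properties $\sup F_{a^\frown \eta_0}<\eta_1$ and $F_{a^\frown \eta}\cap\eta\subseteq F_a$ along each level of the tree. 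Pairs are then classified not by the matching set $S$ but by the number $k$ of elements of $b$ lying above $\max a$; for $k\in(0,n+1)$ the separation properties force $F_a\cap F_b$ down into an intersection of sets attached to proper initial segments of $a$ and $b$, which is bounded by the induction hypothesis applied in $V$ to the level-$n$ family (Proposition \ref{proposition: typekDelta}), with normality (Lemma \ref{lemma: highernormality}) assembling the positive set. To salvage your outline, replace Phase 3 for all types with $b_n>a_n$ by this tree-pruning argument and reserve the ultrapower for the type $a_n=b_n$ alone.
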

We may assume $n\geq 2$ and $\nu\geq \gamma$. Fix $X\in (J^{n+1})^+$ and $\langle E_{a}\in [\kappa]^{<\nu}: a\in X\rangle$ as in the hypothesis.

\subsection{Preliminary pruning}

For $i\leq n+1$, define $X\restriction i = \{a\in [\kappa]^i: \exists c\in [\kappa]^{n+1-i} (a,c)\in X\}$. It is clear that for all $i\leq n+1$, $X\restriction i \in (J^i)^+$. For each $a\in X\restriction i$ and $j> i$, let $(X\restriction j)_a = \{c\in [\kappa]^{j-i}: (a,c)\in X\restriction j\}$.

\begin{notation}
For two finite sequences $a$ and $b$, we use $a\sqsubseteq b$ to denote that either $a=b$ or $a$ is an initial segment of $b$.
\end{notation}

\begin{lemma}\label{lemma: rootsDelta}
There exists a $J^{n+1}$-positive $Y\subset X$ and $\langle F_{a}\in [\kappa]^{<\nu}: a\in Y\restriction i, i\leq n+1\rangle$ such that 
\begin{enumerate}
	\item if $a\in Y$, then $E_a \subseteq F_a$,
	\item for any $i\leq n$, $a\in Y\restriction i$ and for any $\eta_0<\eta_1\in (Y\restriction i+1)_{a}$, $\sup F_{a^\frown \eta_0}<\eta_1$ and $F_{a^\frown \eta_i}\cap \eta_i \subset F_a$ for $i=0,1$, in particular, $F_{a^\frown \eta_0} \cap F_{a^\frown \eta_1}\subset F_a$,
	\item for any $a'\sqsubseteq a$, $F_{a'}\subseteq F_{a}$.
\end{enumerate}
\end{lemma}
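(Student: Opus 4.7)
The plan is to perform a downward induction on $i$ from $n+1$ down to $0$, at each stage building ``raw roots'' $G_a$ for $|a|=i$ and progressively shrinking $X$. At the base $i=n+1$, set $G_a := E_a$ for $a \in X$. For the inductive step, assume $X$ has been shrunk to $X^{(i+1)} \in (J^{n+1})^+$ with $G_c$ already defined for $c \in X^{(i+1)} \restriction j$ for all $j > i$. For each $a \in X^{(i+1)} \restriction i$ the projection $(X^{(i+1)} \restriction (i+1))_a$ lies in $J^+$ by the standing convention on sections of positive sets, so I apply the one-dimensional Lemma \ref{lemma: 1DShrinking} to the family $\langle G_{a^\frown \eta} : \eta \in (X^{(i+1)}\restriction(i+1))_a\rangle$ to obtain a raw root $G_a \in [\kappa]^{<\nu}$ and a $C_a \in J^*$ such that for any $\eta_0 < \eta_1$ in $(X^{(i+1)}\restriction(i+1))_a \cap C_a$ one has $\sup G_{a^\frown \eta_0} < \eta_1$ and $G_{a^\frown \eta_j} \cap \eta_j \subseteq G_a$ for $j\in\{0,1\}$.

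Next I shrink $X^{(i+1)}$ to $X^{(i)} := X^{(i+1)} \cap Z_i$, where $Z_i := \{b \in [\kappa]^{n+1} : b(i) \in C_{b\restriction i}\}$. The crucial observation is $Z_i \in (J^{n+1})^*$: for each $a \in [\kappa]^i$, the $a$-section $(Z_i)_a = \{d \in [\kappa]^{n+1-i} : d(0) \in C_a\}$ is $(J^{n+1-i})$-co-null, because $C_a \in J^*$ constrains only the first coordinate, which by the recursive definition of $J^{n+1-i}$ is enough. Hence $X^{(i)} \in (J^{n+1})^+$ and the induction proceeds.

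After the induction terminates, set $Y_0 := X^{(0)}$ and close the raw roots under initial segments by $F_a := \bigcup_{a' \sqsubseteq a} G_{a'}$; this is a finite union of $<\nu$-sets and hence still of size $<\nu$. Conditions (1) and (3) are then immediate, and the containment halves of (2) follow from $F_{a^\frown \eta_j} \cap \eta_j = (F_a \cap \eta_j) \cup (G_{a^\frown \eta_j} \cap \eta_j) \subseteq F_a$, using $G_{a^\frown \eta_j} \cap \eta_j \subseteq G_a \subseteq F_a$. The remaining clause $\sup F_{a^\frown \eta_0} < \eta_1$ requires one last pruning: set $Y := \{b \in Y_0 : \forall i \leq n,\ b(i+1) > \sup F_{b\restriction(i+1)}\}$. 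Since each $F_a$ is bounded in $\kappa$, this trims only a $J$-null tail at each coordinate, and the same Fubini computation leaves $Y \in (J^{n+1})^+$. The main obstacle in the whole argument is the bookkeeping of the iterated prunings; each individual step reduces to either one-dimensional shrinking or a direct verification, via the recursive definition of $J^{n+1}$, that single-coordinate $J$-co-null constraints lift to $(J^{n+1})^*$.
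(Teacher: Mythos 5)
Your argument is correct and is essentially the paper's proof: the paper phrases it as an induction on $n$ (apply Lemma \ref{lemma: 1DShrinking} once per $a\in X\restriction n$ to produce roots and fibers $Z_a$, then recurse on $X\restriction n$ with those roots), which unrolled is exactly your downward loop over levels, followed by the same closure $F_a=\bigcup_{a'\sqsubseteq a}G_{a'}$ and a final uniformity/normality pruning for the $\sup$ clause. The only blemish is an off-by-one in that last pruning: since $\sup F_{a^\frown\eta_0}=\max(\sup F_a,\sup G_{a^\frown\eta_0})$, you need $b(j)>\sup F_{b\restriction j}$ for all $j\leq n$, in particular $b(0)>\sup F_\emptyset$, whereas your condition starts at $b(1)$ and vacuously references $b(n+1)$.
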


\begin{proof}
We induct on $n\geq 0$. When $n=0$, Lemma \ref{lemma: 1DShrinking} gives the desired conclusion. Suppose $n>0$. For each $a\in X\restriction n$, apply Lemma \ref{lemma: 1DShrinking} to get $F_a\in [\kappa]^{<\nu}$ and $J$-positive $Z_a\subset (X)_a$ such that for any $\eta'<\eta\in Z_a$, $F_{a^\frown \eta}\cap \eta \subset F_a$ and $\sup F_{a^\frown \eta'}<\eta$.

Apply the induction hypothesis to $X\restriction n$ and $\langle F_a: a\in X\restriction n\rangle$. We get a $J^n$-positive $Y^*\subset X\restriction n$ and $\langle F_a: a\in Y^*\restriction i, i\leq n\rangle$ satisfying the conclusion of the lemma. 

So far we have obtained the following sets $Y=\{(a,\gamma): a\in Y^*, \gamma\in Z_a\}$ and $\langle F_a: a\in Y^*\restriction i, i\leq n\rangle \cup \langle F_a=_{def}E_a: a\in Y\rangle$ satisfying the first two requirements. In order to satisfy the third requirement, let us modify the sets by thickening $F_a$'s and thinning out $Y$. Let $F^*_{a}=\bigcup_{a'\sqsubseteq a} F_{a'}$. Utilizing the normality of $J$ to shrink $Y$ if necessary, we can without loss of generality assume that whenever $a\in Y\restriction n$ and $\alpha'\in (Y)_a$, it is the case that for all  $b\in [\alpha']^{n+1}$ with $b\in Y$, we have $\sup F_{b}+1 < \alpha'$.
We check the second requirement is still maintained: for any $a$, $\eta_0<\eta_1$ as given, $\sup F^*_{a^\frown \eta_0}=\sup_{a'\sqsubseteq a} F_{a'}<\eta_1$ by the choice of $Y$ and $F^*_{a^\frown \eta_i}\cap \eta_i = \bigcup_{a'\sqsubseteq a^\frown \eta_i} F_{a'}\cap \eta_i \subseteq \bigcup_{a'\sqsubseteq a} F_{a'} \cup (F_{a^\frown \eta_i}\cap \eta_i)\subseteq F^*_a \cup F_a = F^*_a$.\end{proof}

\subsection{Non-overlapping increasing types}
Applying Lemma \ref{lemma: rootsDelta} to shrink $X$ if necessary, we may without loss of generality assume that there is $\langle F_a: a\in X\restriction i: i\leq n+1\rangle$ satisfying the conclusion of Lemma \ref{lemma: rootsDelta}.

The following is an easy consequence of Lemma \ref{lemma: rootsDelta}. 
\begin{lemma}\label{lemma:n+1increasingdisjoint}
There exist a $J^{n+1}$-positive $Y\subset X$ and $r\in [\kappa]^{<\nu}$ such that for any $a<b\in Y$, $F_a\cap F_b \subset r$.
\end{lemma}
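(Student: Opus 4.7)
The plan is to take $r := F_\emptyset$ and find a $J^{n+1}$-positive $Y'\subseteq Y$ on which, whenever $a<b$, we have $F_a\subseteq \min b$; combined with a "vertical" bound showing $F_b\cap \min b\subseteq F_\emptyset$, this immediately gives $F_a\cap F_b\subseteq F_\emptyset=r$.

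The first ingredient is purely a consequence of Lemma~\ref{lemma: rootsDelta}(2). Write $b=(b_0,\dots,b_n)\in Y$ and apply part (2) iteratively, intersecting with $b_0\le b_i$ at each step:
\[
F_b\cap b_0\ \subseteq\ F_{b\restriction n}\cap b_0\ \subseteq\ F_{b\restriction n-1}\cap b_0\ \subseteq\ \cdots\ \subseteq\ F_{(b_0)}\cap b_0\ \subseteq\ F_\emptyset.
\]
So every $b\in Y$ automatically satisfies $F_b\cap \min b\subseteq F_\emptyset$, which takes care of the ``low'' part of $F_b$.

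The second ingredient controls $F_a$ from above and is where Lemma~\ref{lemma: highernormality} enters. For each $a\in Y\cap[\kappa]^{n+1}$, $\sup F_a<\kappa$, so the tail set $D_a:=\{b\in[\kappa]^{n+1}:\min b>\sup F_a\}$ lies in $(J^{n+1})^*$; for all other $a\in[\kappa]^{<\omega}$ set $D_a:=[\kappa]^{n+1}$. By Lemma~\ref{lemma: highernormality}, the diagonal intersection
\[
\Delta\, D_a\ =\ \{b\in[\kappa]^{n+1}:\forall a\in [\min b]^{<\omega},\ b\in D_a\}
\]
belongs to $(J^{n+1})^*$, which unpacks to: for every $b$ in this set and every $a\in Y\cap[\min b]^{n+1}$, one has $\sup F_a<\min b$. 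Setting $Y':=Y\cap\Delta\, D_a\in(J^{n+1})^+$, any pair $a<b$ in $Y'$ satisfies $a\in Y\cap[\min b]^{n+1}$, so $F_a\subseteq \min b$, and therefore $F_a\cap F_b\subseteq F_b\cap \min b\subseteq F_\emptyset=r$.

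There is no real obstacle here: the statement says the ideal-theoretic hypotheses already suffice, and indeed the argument only repackages uniformity (to get $\sup F_a<\kappa$), higher-dimensional normality (to collapse the dependence of the tail condition on $a$ into a single positive set), and the "root containment" property of Lemma~\ref{lemma: rootsDelta}(2) (to push the part of $F_b$ below $\min b$ all the way down to the empty root $F_\emptyset$). Identifying $F_\emptyset$ as the correct choice of $r$ and setting up the right family $\{D_a\}$ for the diagonal intersection is the only small piece of bookkeeping.
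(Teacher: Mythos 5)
Your proof is correct and matches the intended argument: the paper omits the proof entirely, noting only that it is ``an easy consequence'' of uniformity and normality via Lemma~\ref{lemma: highernormality}, and your argument fills in exactly those details. Identifying $r=F_\emptyset$, collapsing $F_b\cap\min b$ down to $F_\emptyset$ by iterating Lemma~\ref{lemma: rootsDelta}(2), and then using higher normality (and uniformity, which gives $\sup F_a<\kappa$) to diagonalize so that $F_a\subseteq\min b$ whenever $a<b$, is the straightforward route and the one the authors clearly had in mind.
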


The key point is that one can repeatedly apply (2) of Lemma \ref{lemma: rootsDelta} to yield $F_{a}\cap \min a\subset F_\emptyset$. Hence, in fact, we could take $r$ to be $F_\emptyset$.

\subsection{Overlapping at the last coordinate}

\begin{lemma}\label{lemma:lastoverlapDelta}
There exist a $J^{n+1}$-positive $Y\subset X$ and $g: [\kappa]^{\leq n} \to [\kappa]^{<\nu}$ such that for any $a\simeq b\in Y$ with $a_n=b_n$, $F_a\cap F_b\subset g(a\cap b)$.
\end{lemma}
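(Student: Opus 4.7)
The plan is to generalize the argument of Proposition \ref{proposition:2-DOverlapping}, using the $n$-dimensional weak $\Delta$-system lemma as the inductive hypothesis. The idea is to lift the shared last coordinate to the critical point of a generic ultrapower $j:V\to M$ for $P(\kappa)/J$, apply the inductive hypothesis to the resulting $n$-tuple family in $V[G]$, and descend the root function back to $V$.

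Force with $P(\kappa)/J$ and arrange that in $V[G]$ the set $B := \{\bar a\in [\kappa]^n : (X)_{\bar a}\in G\}$ is $(\bar J^n)^+$: otherwise Fact \ref{fact: standard}(3) supplies some $S^*\in J^n$ with $\Vdash \dot B\subseteq \check S^*$, while any $\bar a\in (X\restriction n)\setminus S^*\in (J^n)^+$ has $(X)_{\bar a}\in J^+$ by the standing top-slice convention, yet $\Vdash (X)_{\bar a}\notin \dot G$ forces $(X)_{\bar a}\in J$, a contradiction. In $V[G]$, $\bar J$ remains normal, uniform, and $\gamma$-saturated (Fact \ref{fact: standard}), so the $|\nu|$-$\bar J^n$-weak-$\Delta$-system lemma applied to $\langle F^*_{\bar a,\kappa} := j(\vec F)(\bar a,\kappa) : \bar a\in B\rangle$ yields $B'\subseteq B$ in $(\bar J^n)^+$ together with $W^*:[\kappa]^{\leq n-1}\to [j(\kappa)]^{<\nu}$ satisfying $F^*_{\bar a,\kappa}\cap F^*_{\bar b,\kappa}\subseteq W^*(\bar a\cap \bar b)$ whenever $\bar a\simeq \bar b$ in $B'$. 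By the closure $V[G]\models {}^\kappa M\subseteq M$ and the ultrapower representation, choose a ground-model $H : [\kappa]^{\leq n-1}\times \kappa \to [\kappa]^{<\nu}$ with $W^*(\bar c) = j(H)(\bar c,\kappa)$ for every $\bar c \in [\kappa]^{\leq n-1}$.

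Back in $V$, for each $\bar a \in [\kappa]^n$ pick a $J$-positive $Z_{\bar a}\subseteq (X)_{\bar a}$ forcing $\bar a\in \dot B'$, if any exists; otherwise leave $Z_{\bar a}$ undefined. Let $Z' := \{\bar a : Z_{\bar a}\text{ defined}\}$. A density argument paralleling the 2-dimensional case shows $Z'\in (J^n)^+$: if $Z'\in J^n$, any $\bar a\in B'\setminus Z'$ in $V[G]$ would have $(X)_{\bar a}\in G$, so by density some $J$-positive subset would force $\bar a\in \dot B'$, contradicting $\bar a\notin Z'$. For $\bar a\simeq \bar b\in Z'$ with $Z_{\bar a}\cap Z_{\bar b}\in J^+$ and $\bar c := \bar a\cap \bar b$, this condition forces $F^*_{\bar a,\kappa}\cap F^*_{\bar b,\kappa}\subseteq j(H)(\bar c,\kappa)$; by {\L}o\'s this is equivalent to forcing the ground-model set $S_{\bar a,\bar b} := \{\eta : F_{(\bar a,\eta)}\cap F_{(\bar b,\eta)}\subseteq H(\bar c,\eta)\}$ into $\dot G$, so $C_{\bar a,\bar b} := S_{\bar a,\bar b}\cup (\kappa\setminus(Z_{\bar a}\cap Z_{\bar b}))$ lies in $J^*$.

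Fix a well-ordering $\prec$ of $[\kappa]^n$ whose proper initial segments have size $<\kappa$. For each $\bar b\in Z'$ set $\hat Z_{\bar b} := Z_{\bar b}\cap \bigcap\{C_{\bar a,\bar b} : \bar a\prec \bar b,\ \bar a\simeq \bar b\}\in J^+$ (by $\kappa$-completeness), put $Y := \{(\bar b,\eta) : \bar b\in Z',\ \eta\in \hat Z_{\bar b}\}\in (J^{n+1})^+$, and define $g : [\kappa]^{\leq n}\to [\kappa]^{<\nu}$ by $g(d) := H(d\setminus \{\max d\},\max d)$ for nonempty $d$. For $a = (\bar a,\eta)\simeq b = (\bar b,\eta)$ in $Y$ with $\bar a\prec \bar b$, one has $\eta\in C_{\bar a,\bar b}\cap Z_{\bar a}\cap Z_{\bar b}$, hence $F_a\cap F_b\subseteq H(\bar c,\eta) = g(a\cap b)$, as required. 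The main obstacle is the descent step: producing a single ground-model $H$ whose values simultaneously bound all intersections $F_{(\bar a,\eta)}\cap F_{(\bar b,\eta)}$ as $(\bar a,\bar b,\eta)$ vary; this rests on combining the $\kappa$-closure of $M$ (to realize $W^*\!\restriction [\kappa]^{\leq n-1}$ as $j(H)$), {\L}o\'s's theorem (to convert the extension-side inclusion into a ground-model $J^*$-statement), and $\kappa$-completeness of $J$ (to diagonalize over the $<\kappa$-many $\prec$-predecessors of each $\bar b$ and thereby assemble all pairwise witnesses into a single positive $Y$).
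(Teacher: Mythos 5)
Your proof is correct and follows essentially the same route as the paper's: lift the shared last coordinate to $\kappa$ in a generic ultrapower, apply the $n$-dimensional weak-$\Delta$-system lemma to $\langle j(\vec{F})_{\bar a^\frown \kappa}: \bar a\in B\rangle$ in $V[G]$, pull the root function back to $V$ via the $\kappa$-closure of $M$, and descend with the same density and diagonalization argument. The only (harmless) deviations are that you diagonalize using $\kappa$-completeness over a well-ordering of $[\kappa]^n$ with small initial segments where the paper uses normality and diagonal intersections, and that you should also declare $C_{\bar a,\bar b}$ (the same formula works) in the degenerate case $Z_{\bar a}\cap Z_{\bar b}\in J$ so that the final verification covers all pairs.
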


\begin{proof}

We first show that $\{a\in X\restriction n: (X)_a\Vdash \dot{B}\in (\bar{J})^+\}\in (J^n)^+$, where $B=\{a\in X\restriction n: \kappa\in j((X)_a)\}$ is defined in $V^{P(\kappa)/J}$ and $j$ is the generic ultrapower defined by the generic ultrafilter extending $J$ over $V$. Suppose otherwise for the sake of contradiction, there is $E\in (J^n)^*$ such that for any $a\in E\cap X\restriction n$, $(X)_a \Vdash \dot{B}\in \bar{J}$. Since $P(\kappa)/J$ satisfies $\gamma$-c.c and $J$ is $\kappa$-complete, we can find $G_a\in J$ such that $(X)_a\Vdash \dot{B}\subset G_a$ for each $a\in E\cap X\restriction n$. Since $X$ is $J^{n+1}$-positive, we can find $(a, \beta)\in X\cap E^*$ where $E^*=_{def}\{(a_0,a_1): a_0\in E, a_1\in G_a^c\}\in (J^{n+1})^*$. On one hand, the fact that $(a,\beta)\in X$ implies that $(X)_a\Vdash a\in \dot{B}$. On the other hand, the fact that $(a,\beta)\in E^*$ implies that $(X)_a\Vdash a\not\in \dot{B}$. This is a contradiction. 

By shrinking $X$ if necessary, we may assume without loss of generality that for any $a\in X\restriction n$, $(X)_a\Vdash \dot{B}\in (\bar{J})^+$.
Let $G\subset P(\kappa)/J$ be generic over $V$ such that $B \in (\bar{J}^n)^+$. In $V[G]$, fix a generic elementary embedding $j: V\to M\simeq \mathrm{Ult}(V,G)$ with $\mathrm{crit}(j)=\kappa$ and $V[G]\models {}^\kappa M\subset M$.

Consider $D=\{j(\vec{F})_{a^\frown \kappa}=_{def}F^*_{a^\frown \kappa}: a\in B\}$. By Fact \ref{fact: standard}, $\bar{J}$ is $\gamma$-c.c in $V[G]$. Apply the induction hypothesis, more precisely Theorem \ref{theorem: nDWeakDelta} for dimension $n-1$, in $V[G]$ to $D$, $B$ and $\bar{J}$, we get $B^*\subset B$ which is $\bar{J}^n$-positive and a function $L$ such that for any $c,d\in B^*$, if $c\simeq d$, then $F^*_{a^\frown \kappa}\cap F^*_{b^\frown \kappa}\subset L(a\cap b)$. Since $V[G]\models {}^\kappa M\subset M$, we know that $L\in M$. As a result, there is some $l\in V$ such that $j(l)(\kappa)=L$. We may assume that for each $\gamma<\kappa$, $l(\gamma)$ is a function on $[\gamma]^{\leq n-1}$ to $[\kappa]^{<\nu}$.

Back in $V$. For each $a\in X\restriction n$, find $Z_a\subset (X)_a\in J^+$ forcing over $P(\kappa)/J$ that $a\in \dot{B}^*$ if such a set exists. Note that each such $(X)_a$ forces $\dot{B}\in (\bar{J})^+$, so we have no problem defining $\dot{B}^*$.

	\begin{claim}
	$Y^*=\{a\in X\restriction n: Z_a\text{ exists}\}\in (J^n)^+$.
	\end{claim}
	\begin{proof}[Proof of the Claim]
	Suppose otherwise for the sake of contradiction. There exists $C\in (J^n)^*\restriction (X\restriction n)$ such that for any $a\in C$, $(X)_a\Vdash a\not\in \dot{B}^*$. In $V[G]$, since $B^*\subset X\restriction n$ is in $(\bar{J}^n)^+$, we know that $B^*\cap C\neq \emptyset$. Fix $d\in B^*\cap C$. In particular, since $d\in B$, $\kappa\in j((X)_d)$, which implies that $(X)_d\in G$. As $(X)_d\Vdash d\not\in \dot{B}^*$, we have $d\not\in B^*$. This is a contradiction.
	\end{proof}
	
	For any $a\simeq b\in Y^*$, find $C_{a,b}\in J^*$ such that 
	\begin{itemize}
	\item either $C_{a,b}\cap Z_a\cap Z_b=\emptyset$ or 
	\item if $Z_a\cap Z_b\in J^+$, then for any $\gamma\in C_{a,b}\cap Z_a\cap Z_b$, $F_{a^\frown \gamma} \cap F_{b^\frown \gamma}\subset l(\gamma)(a\cap b)$. 
 \end{itemize}		
	The reason we can find $C_{a,b}$ is that $Z_a\cap Z_b \Vdash a, b\in \dot{B}^*$. 
	
	For each $b\in Y^*$, let $$Z_b'=\{\eta\in Z_b: \forall a<\eta, a\in Y^*, a\simeq b, \eta\in C_{a,b}\}.$$
	
	Note that $Z_b'=_J Z_b$ since $J$ is normal. Finally, let $Y=\{(b,\eta): b\in Y^*, \eta\in Z_b'\}$. It is easy to see that $Y\subset X$ is in $(J^{n+1})^+$. Let $g: [\kappa]^{\leq n} \to [\kappa]^{<\nu}$ defined such that $g(a)=l(\max(a))(a-\{\max (a)\})$.
	To see that $Y$ and $g$ are as desired, fix $a\simeq b\in Y$ with $a_n=b_n=\eta$. Let $a_0=a-\{\eta\}$ and $b_0=b-\{\eta\}$. By the definition of $Z_{b_0}'$ and $Z_{a_0}'$, $\eta\in C_{a_0,b_0}\cap Z_{a_0}\cap Z_{b_0}$. In particular, $Z_{a_0}\cap Z_{b_0}\in J^+$. As a result of the choice of $C_{a_0,b_0}$ in this case, $F_{a_0^\frown \eta} \cap F_{b_0^\frown \eta}\subset l(\eta)(a_0\cap b_0) = g(a\cap b)$.	\end{proof}

\subsection{Other types}

We classify the remaining types as follows. 

\begin{definition}\label{definition: typek}
We say $(a,b)\in [X]^2_{\simeq} =\{(c,d)\in X\times X: c_n\leq d_n, c\simeq d\}$ is of \emph{type $k$} for $0<k\leq n$ if $|b-a_n|=k$.
\end{definition}

\begin{remark}
The reason why we focus on $k\in (0,n+1)$ is that $(a,b)$ of type $0$ implies $a_n=b_n$ and $(a,b)$ of type $n+1$ implies $a<b$. Both of these types have been dealt with in the previous subsections.
\end{remark}

\begin{proposition}\label{proposition: typekDelta}
Fix $k\in (0,n+1)$. There exist a $J^{n+1}$-positive $Y\subset X$ and $l_k: [\kappa]^{<\omega}\to [\kappa]^{<\nu}$ such that for any $(a,b)\in [Y]^2_{\simeq}$ of type $k$, it is true that $F_a\cap F_b\subset l_k(a\cap b)$.
\end{proposition}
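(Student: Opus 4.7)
The plan is to follow the generic-ultrapower strategy of Lemma~\ref{lemma:lastoverlapDelta}, iterated $k$ times to account for the $k$ coordinates of $b$ that lie strictly above $a_n$ rather than the single shared coordinate $a_n = b_n$. Iterate the generic ultrapower $k$ times to obtain, in a final extension $V^*$, an elementary embedding $j : V \to M$ with critical sequence $\kappa = \kappa_0 < \kappa_1 < \cdots < \kappa_{k-1} < j(\kappa)$; by iteratively applying Fact~\ref{fact: standard}, both $\gamma$-saturation of the generated ideals and ${}^\kappa M$-closure are preserved through the iteration.

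In $V^*$, let $B := \{c \in X\restriction(n+1-k) : c^\frown(\kappa_0, \ldots, \kappa_{k-1}) \in j(X)\}$; by the pulling-back claim used inside the proof of Lemma~\ref{lemma:lastoverlapDelta}, $B \in (\bar{J}^{n+1-k})^+$. Set $F^*_c := j(\vec{F})_{c^\frown(\kappa_0, \ldots, \kappa_{k-1})}$ for each $c \in B$, and apply the inductive hypothesis---the $|\nu|$-$J^{n+1-k}$-weak-$\Delta$-system lemma, valid by the outer induction on $n$ since $n - k < n$---in $V^*$ to the family $\{F^*_c : c \in B\}$, producing a $\bar{J}^{n+1-k}$-positive $B^* \subseteq B$ and a function $L : [\kappa]^{\leq n-k} \to [j(\kappa)]^{<\nu}$ with $F^*_c \cap F^*_{c'} \subseteq L(c \cap c')$ for all $c \simeq c' \in B^*$. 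The ${}^\kappa M$-closure gives $L \in M$, so $L = j(\ell)(\kappa_0, \ldots, \kappa_{k-1})$ for some $\ell \in V$ with $\ell : [\kappa]^k \to ([\kappa]^{\leq n-k} \to [\kappa]^{<\nu})$.

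Pulling back to $V$ mirrors the final argument of Lemma~\ref{lemma:lastoverlapDelta}, but the witnessing sets are now $J^k$-positive and the diagonalization uses the higher-dimensional normality of Lemma~\ref{lemma: highernormality}. For each $c \in X\restriction(n+1-k)$, locate (if possible) a $J^k$-positive $Z_c \subseteq (X)_c$ forcing $c \in \dot{B}^*$ over the iterated forcing; the set $Y^* := \{c : Z_c \text{ is defined}\}$ is $J^{n+1-k}$-positive by the analogous contradiction argument. For each pair $c \simeq c' \in Y^*$, choose $C_{c, c'} \in (J^k)^*$ so that on $C_{c, c'} \cap Z_c \cap Z_{c'}$ (when this is $J^k$-positive) the intersection $F_{c^\frown \vec\eta} \cap F_{c'^\frown \vec\eta}$ is contained in $\ell(\vec\eta)(c \cap c')$; then diagonalize over $c'$ via Lemma~\ref{lemma: highernormality} to obtain $Z_c' \subseteq Z_c$ in $(J^k)^+$, and set $Y := \{(c, \vec\eta) : c \in Y^*, \vec\eta \in Z_c'\} \in (J^{n+1})^+$. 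For a type-$k$ pair $(a, b) = (a'^\frown e, c^\frown d)$ in $Y$, the bound $F_{a'} \cap F_c \subseteq L(a' \cap c)$ descends by iterating condition~(2) of Lemma~\ref{lemma: rootsDelta}, which gives $F_a \cap a_{n-k} \subseteq F_{a'}$ and $F_b \cap b_{n-k+1} \subseteq F_c$, reducing the control of $F_a \cap F_b$ to that of the bottom parts.

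The main obstacle is the iterated ultrapower setup combined with the $k$-dimensional diagonalization, both notably more delicate than their single-ultrapower, one-dimensional counterparts in Lemma~\ref{lemma:lastoverlapDelta}. A further subtle point is that $L$ is naturally indexed in the ultrapower by $\vec\kappa$, which on pulling back corresponds to the top $k$ coordinates of $b$ lying above $a_n$---coordinates that are disjoint from $a \cap b$. So extracting a bound $l_k(s)$ depending only on $s = a \cap b$ requires either a canonical choice of $\vec\eta$ as a function of $s$ (with $Y$ pruned so this choice is realized), or one further round of absorption/diagonalization to eliminate the $\vec\eta$-dependence.
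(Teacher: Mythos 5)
There is a genuine gap, and it is the one you half-noticed in your last paragraph, but it is worse than a parametrization issue. The statement you reflect and pull back controls the wrong pairs. In $V^*$ your $B^*$ and $L$ give $F^*_c\cap F^*_{c'}=j(\vec F)_{c^\frown\vec\kappa}\cap j(\vec F)_{c'^\frown\vec\kappa}\subseteq L(c\cap c')$, i.e.\ a bound for pairs whose top $k$ coordinates are \emph{identical} (both equal to the critical sequence). Pulled back, this yields control of $F_{c^\frown\vec\eta}\cap F_{c'^\frown\vec\eta}$ for pairs of elements of $X$ sharing the \emph{same} top-$k$ block $\vec\eta$ --- a $k$-coordinate generalization of the type-$0$ configuration of Lemma \ref{lemma:lastoverlapDelta}. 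A type-$k$ pair $(a,b)$ is asymmetric: the top $k$ coordinates of $b$ lie strictly above $\max a$ and are shared with nothing in $a$, while $a$ contributes its own top coordinate $a_n$ not in $b$. So the bound $F_{a'}\cap F_c\subseteq L(a'\cap c)$ you invoke in the descent step is not delivered by your ultrapower argument, and the $\vec\eta$-dependence cannot be eliminated afterwards: for a fixed root $a\cap b$ the block $\vec\eta=b^1$ ranges over a $J^k$-positive set disjoint from $a\cap b$, so no "canonical choice" or further diagonalization turns $\ell(\vec\eta)(\cdot)$ into a function of $a\cap b$ alone. A secondary gap is the $k$-fold iterated generic ultrapower itself: Fact \ref{fact: standard} is a single-step statement, and preservation of $\gamma$-saturation and ${}^\kappa M$-closure through the iteration is asserted rather than proved.

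The paper's proof of this proposition uses no forcing at all, which is why the asymmetry causes it no trouble. After the preliminary pruning of Lemma \ref{lemma: rootsDelta}, the top block $b^1$ of $b$ lies above $\sup F_a$ and $a_n$ lies above $\sup F_{b^0}$, so the separation properties $F_b\cap\min b^1\subseteq F_{b^0}$ and $F_a\cap a_n\subseteq F_{a^0}$ collapse $F_a\cap F_b$ into $F_{a^0}\cap F_{b^0}$ with $a^0=a\restriction n$. The induction hypothesis is then applied \emph{in $V$} to the level-$n$ root family $\langle F_c: c\in X\restriction n\rangle$, and monotonicity $F_{b^0}\subseteq F_{b''}$ for a length-$n$ initial segment $b''$ of $b$ produces the bound $l_k(a\cap b)$; the remaining work is a diagonalization via Lemma \ref{lemma: highernormality} over the "potentially type $k$" pairs. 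If you want to repair your argument, replace the symmetric reflection by this combinatorial reduction (or by an asymmetric reflection in which only $b$'s top block is sent to the critical sequence, which is essentially Lemma \ref{lemma: 1DShrinking} again).
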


We remark that the definition of $l_k$ does not depend on $k$ and only depends on the choice of $Y$. I choose the indexing to emphasize that we are dealing with tuples of type $k$.

\begin{proof}
Recall that $\langle F_a: a\in X\restriction i, i\leq n+1\rangle$ satisfies the conclusion of Lemma \ref{lemma: rootsDelta}. Apply the induction hypothesis to $\langle F_a: a\in X\restriction n\rangle$ to get $J^n$-positive $Z\subset X\restriction n$ and $l_k$ such that for any $c,d\in Z$ with $c\simeq d$, $F_c\cap F_d\subset l_k(c\cap d)$. 
We say $(a^0, {b^0})$ is \emph{potentially of type $k$} if 
	\begin{itemize}
	\item $a^0\in Z$,
	\item ${b^0}\in Z\restriction n-k$,
	\item there are ${a^1}, {b^1}$ such that $({a^0}^\frown {a^1}, {b^0}^\frown {b^1})\in [X]^2_{\simeq}$ of type $k$.
	\end{itemize}

	\begin{claim}\label{claim: assym}
	For any $(a^0,{b^0})$ potentially of type $k$, there are $C_{a^0}=C_{a^0,{b^0}}^0\in J^*$ and $C_{{b^0}}=C^1_{a^0,{b^0}}\in (J^k)^*$ such that for any ${a^1}\in C_{a^0}\cap (X)_{a^0}$ and ${b^1}\in C_{{b^0}}\cap (X)_{{b^0}}$ with ${a^1}<{b^1}$, $F_{{a^0}^\frown {a^1}} \cap  F_{{b^0}^\frown {b^1}} \subset l_k(a^0\cap b^0)$.
	\end{claim}
	\begin{proof}[Proof of the claim]
	Let $C_{a^0}$ consist of $\alpha$ such that for all $b'<\alpha$, $\sup F_{b'} <\alpha$. Similarly $C_{b^0}$ consists of $c$ such that for all $b'<\min c$, $\sup F_{b'}<\min c$. Fix $a^1<b^1$ with $a^1\in C_{a^0}$ and $b^1\in C_{b^0}$. In particular, $a^1>\sup F_{b^0}$ and $\min b^1>\sup F_{{a^0}^\frown a^1}$.
	 By Lemma \ref{lemma: rootsDelta}, we know that $F_{{b^0}^\frown {b^1}}\cap \min b^1 \subset F_{b^0}$ and $F_{{a^0}^\frown a^1}\cap a^1 \subset F_{a^0}$. As a result, $F_{{b^0}^\frown b^1}\cap F_{{a^0}^\frown a^1}\subset F_{b^0}\cap F_{a^0}\subset F_{{b^0}^\frown b^1\restriction k-1}\cap F_{a^0} \subset l_k(a^0\cap b^0)$.
	\end{proof}
	
For each $b\in Z\restriction n-k$, let $$W_b = \{{b^1}: \forall a^0\subset \min {b^1}, (a^0,b) \text{ is potentially of type $k$} \rightarrow {b^1}\in C^1_{a^0, b} \}.$$
By Lemma \ref{lemma: highernormality}, $W_b\in (J^k)^*$. Let $$Y'=\{(c,d): c\in Z\restriction n-k, d\in W_c\cap (Z)_c\}.$$
Then $Y'$ is a $J^{n+1}$-positive subset of $X$.
For each $a\in Y'\restriction n$, let
$$E_a=\{\eta: \forall {b^0}\subset \eta, (a,{b^0}) \text{ is potentially of type $k$}\rightarrow \eta\in C^0_{a,{b^0}}\}.$$
By Lemma \ref{lemma: highernormality}, $E_a\in J^*$. Let $Y=\{(a,\eta)\in Y': \eta\in E_a\cap (Y')_a\}$. It is immediate that $Y$ is a $J^{n+1}$-positive subset of $Y'$.

Finally, let us verify that $Y$ is as desired. Suppose $(a,b)\in [Y]^2_{\simeq}$ of type $k$. Let $a^0=a\restriction n$ and ${b^0}=b\restriction n-k$. Then $(a^0, {b^0})$ is potentially of type $k$. Since ${b^0}<a_n$, $a_n\in C^{0}_{a^0, {b^0}}$ and since $a^0< b_{n-k+1}$, $b\restriction_{[n-k+1, n]}\in C^1_{a^0, {b^0}}$. Since $a_n < b\restriction_{[n-k+1, n]}$, by Claim \ref{claim: assym}, $F_{a}\cap F_{b}\subset l_k(a^0\cap b^0)= l_k(a\cap b)$.
\end{proof}

By going through Proposition \ref{proposition: typekDelta} for each value $k\in (0,n+1)$, we get the following.

\begin{corollary}\label{cor: alltypes1}
There exist a $J^{n+1}$-positive $Y\subset X$ and $l^*: [\kappa]^{<\omega}\to [\kappa]^{<\nu}$ such that for any $k\in (0,n+1)$, and any $(a,b)\in [Y]^2_{\simeq}$ of type $k$, it is true that $F_a\cap F_b\subset l^*(a\cap b)$.
\end{corollary}

\begin{proof}[Proof of Theorem \ref{theorem: nDWeakDelta}]
Apply successively Lemma \ref{lemma:n+1increasingdisjoint}, Lemma \ref{lemma:lastoverlapDelta} and Corollary \ref{cor: alltypes1}.
\end{proof}

\section{Applications to Ramsey-type results}\label{section: Ramsey}

In this section, we give some examples of $J^n$-$m$-linked posets.

\begin{theorem}\label{theorem: Cohen}
For any cardinal $\lambda$, any $\gamma$-saturated normal uniform ideal $J$ on $\kappa$ where $\gamma<\kappa$ and $m,n\in \omega$, $Add(\omega, \lambda)$ is $J^n$-$m$-linked.
\end{theorem}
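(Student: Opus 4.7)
The plan is to apply the $J^n$-weak-$\Delta$-system lemma (Theorem~\ref{theorem: nDWeakDelta}) to the domains of the given conditions and then stabilize, on each resulting root, the finite partial function that each $p_a$ restricts to there; once this is done, pairwise compatibility on every intersection of the domains is automatic, and a common extension is obtained by taking unions. A preliminary reduction absorbs $\lambda$ into $\kappa$: since $|X|\leq\kappa$ and each $p_a$ has finite support, $T:=\bigcup_{a\in X}\mathrm{dom}(p_a)$ has size at most $\kappa$, and $\{p_a:a\in X\}$ lies in the complete sub-poset $\mathrm{Add}(\omega,T)$, which after fixing a bijection $\kappa\times\omega\leftrightarrow\kappa$ embeds into $\mathrm{Add}(\omega,\kappa)$. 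I may thus assume $\lambda=\kappa$ and view $E_a:=\mathrm{dom}(p_a)$ as an element of $[\kappa]^{<\omega}$. Fixing any $\nu\in[\gamma,\kappa)$ and applying Theorem~\ref{theorem: nDWeakDelta} to $\langle E_a:a\in X\rangle$, I obtain a $J^n$-positive $Y_0\subseteq X$ and $W:[\kappa]^{\leq n-1}\to[\kappa]^{<\nu}$ such that $E_a\cap E_b\subseteq W(a\cap b)$ whenever $a\simeq b$ lie in $Y_0$.

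The second and principal step is to stabilize, for each potential root $r\in[\kappa]^{\leq n-1}$, the restriction $p_a\restriction(W(r)\times\omega)$. Each such restriction is a finite partial function into $2$ whose domain is a finite subset of $W(r)\times\omega$, so it takes values in a set of cardinality at most $|W(r)|+\aleph_0<\kappa$. Depending on the position of $r$ inside $a$, I would stabilize by either the $\kappa$-completeness of $J^n$ (for the empty root), slice-by-slice $\kappa$-completeness of $J$ diagonalized by the higher normality of $J^n$ recorded in Lemma~\ref{lemma: highernormality} (for initial roots), or a generic ultrapower argument in the spirit of Proposition~\ref{proposition:2-DOverlapping} (for roots involving later coordinates of $a$). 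The outcome is a $J^n$-positive $Y\subseteq Y_0$ and a function $\Phi$ assigning to each relevant $r$ a finite partial function $\Phi(r)$ on $W(r)\times\omega$, such that $p_a\restriction(W(r)\times\omega)=\Phi(r)$ whenever $a\in Y$ and $r$ arises as $a\cap b$ for some $b\simeq a$ in $Y$.

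The verification of $J^n$-$m$-linkedness is then routine: for pairwise $\simeq$-equivalent $a_0,\ldots,a_{m-1}\in Y$ and any $i\neq j$, we have $E_{a_i}\cap E_{a_j}\subseteq W(a_i\cap a_j)$, and on that set both $p_{a_i}$ and $p_{a_j}$ agree with $\Phi(a_i\cap a_j)$; hence $\bigcup_{i<m}p_{a_i}$ is a finite function in $\mathrm{Add}(\omega,\lambda)$ extending each $p_{a_i}$. The main technical obstacle is the simultaneous stabilization step: for any single root $r$ the $\kappa$-completeness of $J^n$ readily extracts a positive piece, but the complement of that piece is typically positive as well, so the $\kappa$-many choices cannot be combined by a naive intersection; threading them together without destroying positivity, particularly at the non-initial roots, is what forces the use of the generic-ultrapower diagonalization developed in Sections~\ref{section: 2d} and~\ref{section: n+1Knaster}.
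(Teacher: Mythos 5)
Your overall skeleton --- apply the weak $\Delta$-system lemma to the domains, stabilize the behaviour of the conditions on the root sets $W(r)$, and then observe that finitely many pairwise-agreeing Cohen conditions have a union which is a common lower bound --- matches the paper's, and your preliminary reduction of $\lambda$ to $\kappa$ is fine (the paper leaves it implicit). The problem is the middle step, which you correctly identify as the crux and then do not carry out. Requiring the actual restrictions $p_a\restriction (W(r)\times\omega)$ to depend only on the root $r$ (your function $\Phi$) is genuinely strong: for a fixed position of the root inside $a$, the value $\Phi(a\cap b)$ varies with $a$, so this is not the stabilization of a single $<\kappa$-valued function of $a$ but a second $\Delta$-system-type statement with a root function; for non-initial roots it would indeed require rerunning the generic-ultrapower diagonalization of Sections \ref{section: 2d} and \ref{section: n+1Knaster}. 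You assert the outcome (``there is a $J^n$-positive $Y$ and a function $\Phi$\dots'') without proving it, and the sketched case analysis is not an argument. That is the gap.

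The paper avoids this entirely with one observation you are missing: one does not need the restriction of $p_a$ to $W(r)$ to be a function of $r$ alone; it suffices that its \emph{code relative to the increasing enumeration of $W(r)$} be constant in $a$. Concretely, after fixing the shape of the conditions (the size of $\mathrm{dom}(p_a)$ and the values listed in increasing order of the domain), for each of the $2^n$ subsets $a^i\subseteq a$ one records the finite set $w^a_i$ of pairs $(j,v)$ such that the $j$-th element of $W(a^i)$ lies in $\mathrm{dom}(p_a)$ with value $v$. The tuple $\langle w^a_i : i<2^n\rangle$ is a single function of $a$ into a set of size less than $\kappa$, so one application of the completeness of $J^n$ stabilizes it on a $J^n$-positive set; no normality, no diagonalization, and no generic ultrapowers are needed at this stage. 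If then $a\simeq b$ and $\eta\in \mathrm{dom}(p_a)\cap \mathrm{dom}(p_b)\subseteq W(a\cap b)$, the root $a\cap b$ equals $a^i=b^i$ for the same index $i$, so $\eta$ occupies the same position $j$ of $W(a\cap b)$ from both points of view and $p_a(\eta)=p_b(\eta)$ is read off from the stabilized $w_i$. Replacing your $\Phi$-stabilization with this position-coding argument closes the gap and makes the whole proof a corollary of Theorem \ref{theorem: nDWeakDelta} plus completeness.
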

\begin{proof}
Given $X\in (J^n)^+$ and $\langle p_a\in Add(\omega,\lambda): a\in X\rangle$, apply Theorem \ref{theorem: nDWeakDelta} to $\langle dom(p_a): a\in X\rangle$ and we get $J^n$-positive $Y\subset X$ and $W: [\kappa]^{\leq n-1}\to [\lambda]^{\aleph_0}$ such that for any $a\simeq b\in Y$, $dom(p_a)\cap dom(p_b)\subset W(a\cap b)$. We may assume there is some $z\in \omega$ and some function $l: z\to \omega$ such that for any $a\in Y$, $|dom(p_a)|=z$ and $p_a(dom(p_a)(j))=l(j)$ for all $j\in z$, where $dom(p_a)(j)$ is the $j$-th element of the natural enumeration of $dom(p_a)$. This is possible since $J$ is countably complete. From a natural numeration of $P(z)$, we can induce a natural enumeration of $P(a)$, as $\{a^i: i<2^z\}$ for any set $a$ of size $z$. For each $a\in Y$,  we record the following information: $\langle w^a_i: i<2^z\rangle$ where each $$w^a_i=\{(j, n): j<otp(W(a^i)), W(a^i)(j)\in dom(p_a), p_{a}(W(a^i)(j))=n\}.$$ Since $J^n$ is countably complete, we can find $J^n$-positive $Y'\subset Y$ and $\langle w_i: i<2^z\rangle$ such that for all $a\in Y'$, for all $i<2^z$, $w^a_i=w_i$. As a result, if $a\simeq b \in Y'$, then $p_a\cup p_b$ is a function. To see this, let $\eta\in dom(p_a)\cap dom(p_b)\subset W(a\cap b)$. Since $a\simeq b$, there is $i<2^z$ such that $c=a\cap b= a^i = b^i$. As a result, there is some $j$ such that $W(c)(j)=\eta$. Then we have $p_b(\eta)=n$ iff $(j, n)\in w_i$ iff $p_a(\eta)=n$. Suppose we have pairwise aligned $\{a^*_k: k<m\}\subset Y'$, namely for any $k_0,k_1<m$, $a^*_{k_0}\simeq a^*_{k_1}$. Then since for all $k,k'<m$, $p_{a^*_k}\cup p_{a^*_{k'}}$ is a function, we know that $\bigcup_{k<m} p_{a^*_k}$ is a function, as desired.
\end{proof}

Recall that a poset $P$ is \emph{$\sigma$-$m$-linked} for $m\in \omega$ if $P$ can be written as $\bigcup_{i\in \omega} P_i$ such that for each $i\in \omega$ and any $\langle r_j: j<m\rangle\subset P_i$, there is $r\in P$ with $r\leq_P r_j$ for any $j<m$.

\begin{theorem}\label{theorem: sigma-m-linked}
Fix a $\gamma$-saturated normal uniform ideal $J$ on $\kappa$ where $\gamma<\kappa$ and $m,n\in \omega$. Let $\langle P_\alpha, \dot{Q}_\beta: \alpha\leq \lambda, \beta<\lambda\rangle$ be a finite support iteration of $\sigma$-$m$-linked posets of length $\lambda$ for some ordinal $\lambda$. Then $P=P_\lambda$ is $J^n$-$m$-linked.
\end{theorem}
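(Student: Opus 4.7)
The plan is to adapt the proof of Theorem \ref{theorem: Cohen}, where the finite support of a condition $p \in P$ plays the role of the domain of a Cohen condition, with the key new ingredient being the use of the $\sigma$-centering function at each coordinate to convert the name-based ``value'' of a condition into a ground-model integer. First I would apply Theorem \ref{theorem: nDWeakDelta} to the family $\langle \mathrm{supp}(p_a) : a \in X\rangle$ of finite subsets of $\kappa$, with some $\nu \in [\gamma, \kappa)$, obtaining a $J^n$-positive $Y \subseteq X$ and a function $W : [\kappa]^{\leq n-1} \to [\kappa]^{<\nu}$ such that $\mathrm{supp}(p_a) \cap \mathrm{supp}(p_b) \subseteq W(a \cap b)$ for all $a \simeq b$ in $Y$. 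Using the $\kappa$-completeness of $J^n$ I would then uniformize the basic discrete invariants, as in the Cohen argument: a common support size $z \in \omega$, and, via the canonical enumeration $\{a^i : i < 2^z\}$ of $P(a)$, the assignment of each position $j < z$ in $\mathrm{supp}(p_a)$ to a fixed root set $W(a^i)$.

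Next, fix for each $\beta < \kappa$ a $P_\beta$-name $\dot c_\beta$ for a $\sigma$-centering function on $\dot Q_\beta$. For each $a \in Y$ I would build a refinement $p_a^* \leq p_a$ that decides all the relevant colors: processing the coordinates $\beta \in \mathrm{supp}(p_a)$ in increasing order, at each $\beta$ I strengthen the current $P_\beta$-part so that it decides $\dot c_\beta(p_a(\beta)) = \check n_a(\beta)$ for some $n_a(\beta) \in \omega$. Because such deciding refinements may introduce finitely many new coordinates below $\beta$, $\mathrm{supp}(p_a^*)$ will in general strictly contain $\mathrm{supp}(p_a)$, so I would re-apply Theorem \ref{theorem: nDWeakDelta} to $\langle \mathrm{supp}(p_a^*) : a \in X\rangle$ and re-uniformize the positional data. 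The colors $n_a(\beta)$ at each of the finitely many structural positions then lie in $\omega$, so countable completeness of $J^n$ together with Lemma \ref{lemma: highernormality} (via diagonal intersections over the root parameter $r \in [\kappa]^{<n}$) yields a $J^n$-positive $Y' \subseteq Y$ on which the tuple of colors depends only on the structural position within the refined support, not on $a$.

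The conclusion then mimics the Cohen proof: for any $\{a_k : k < m\} \subseteq Y'$ with $a_k \simeq a_{k'}$ pairwise, the supports of the $p_{a_k}^*$ agree on their common coordinates, and at each common coordinate $\beta$ the $m$ values $p_{a_k}^*(\beta)$ carry the same decided $\sigma$-centering color, so $\sigma$-centeredness of $\dot Q_\beta$ furnishes a common lower bound in the $P_\beta$-extension; a straightforward induction on $\beta$ assembles these local lower bounds into a single condition $q \in P$ with $q \leq p_{a_k}$ for all $k < m$. The main obstacle I expect is controlling the support expansion in the color-deciding step: a single deciding refinement may introduce new coordinates that themselves require further color decisions, so the refinement, the re-application of the $\Delta$-system lemma, and the uniformization must be carefully interleaved, for instance by exploiting the finite-support structure to decide all colors at each level before moving on, and by verifying that the iterated uniformization does not drive $Y'$ out of $(J^n)^+$.
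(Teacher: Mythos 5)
Your proposal is correct and follows essentially the same route as the paper: the paper packages your color-deciding step as the observation that the conditions deciding their own centering indices form a dense set $D\subseteq P_\kappa$, and then factors the whole argument through the induced map $p\mapsto f_p$ from $D$ into $Add(\omega,\kappa)$ so that Theorem \ref{theorem: Cohen} can be cited as a black box rather than re-proved inline. In particular, the support-expansion issue you flag is absorbed into the standard density argument for $D$ (decide the colors coordinate by coordinate along the iteration), after which a single application of the weak $\Delta$-system lemma to the refined supports suffices.
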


\begin{proof}
We can apply Theorem \ref{theorem: Cohen} granted that we have shown the following: 
there is a map $g: P_\lambda \to Add(\omega,\lambda)$ such that if $\langle p_i: i<m\rangle \subset P_\kappa$ satisfies that $\langle g(p_i): i<m\rangle$ admits a lower bound in $Add(\omega,\lambda)$, then $\langle p_i: i<m\rangle$ admits a lower bound in $P_\lambda$. It is enough to define such a $g$ on a dense subset of $P_\lambda$. For each $\beta\in \lambda$, let $\Vdash_{P_\beta} \dot{Q}_\beta=\bigcup_{n\in \omega} \dot{Q}_{\beta, n}$ witness that $\Vdash_{P_\beta}\dot{Q}_\beta$ is $\sigma$-$m$-linked. Then $D=\{p\in P_\lambda: \exists f=f_p: support(p) \to \omega, \forall \beta\in support(p), p\restriction \beta\Vdash_{P_\beta} p(\beta)\in \dot{Q}_{\beta, f(\beta)} \}$ is easily seen to be dense in $P_\lambda$. Then the map $p\in D\mapsto f_p\in Add(\omega, \lambda)$ is as desired.
\end{proof}

The following theorem can be thought of as a higher dimensional generalization of a result due to Fremlin \cite{fremlin}.

\begin{theorem}\label{theorem: Random}
For any $\gamma$-saturated normal uniform ideal $J$ on $\kappa$ where $\gamma<\kappa$, $m,n\in \omega$ and any measure algebra $\mathbb{M}$, $\mathbb{M}$ is $J^n$-$m$-linked. 
\end{theorem}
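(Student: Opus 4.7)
The plan is to adapt the argument of Theorem \ref{theorem: Cohen} to the measure algebra setting: reduce compatibility to a combinatorial condition via finite-support approximation, apply the weak $\Delta$-system lemma to the supports, and use a type-stabilization together with measure-theoretic independence arising from disjoint supports. Realize $\mathbb{M}$ as the measure algebra of some $(2^\lambda,\mu)$. Given $X\in(J^n)^+$ and $\langle p_a : a\in X\rangle$, first use $\sigma$-completeness of $J^n$ to pass to a $J^n$-positive subset on which $\mu(p_a)\geq\varepsilon$ for a fixed $\varepsilon>0$. Fix a small $\delta>0$ (depending on $\varepsilon$ and $m$); since finite-support cylinders are dense in $\mathbb{M}$, select for each $a$ a finite-support $q_a$ with $\mu(p_a\triangle q_a)\leq\delta$. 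Each $q_a$ is encoded by a finite $s_a\subset\lambda$ together with $q_a^{\flat}\subset 2^{s_a}$; by further $\sigma$-complete shrinking, assume $|s_a|=k$ is constant, and by re-indexing (using $|X|\leq\kappa$ and $|s_a|<\aleph_0$) assume $s_a\subset\kappa$.

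Apply Theorem \ref{theorem: nDWeakDelta} to $\langle s_a : a\in X\rangle$ to obtain $J^n$-positive $Y_0\subset X$ and a root function $W:[\kappa]^{\leq n-1}\to[\kappa]^{<\aleph_0}$ with $s_a\cap s_b\subset W(a\cap b)$ whenever $a\simeq b\in Y_0$. Stabilize types: shrink $Y_0$ to $Y\in(J^n)^+$ on which, for each position $j<k$, a fixed label $(c_j,r_j)$ records which $W(a^{c_j})$ contains $s_a(j)$ (with $c_j=*$ flagging off-root) along with its position there, and a fixed $q^*\subset 2^k$ identifies $q_a^{\flat}$ with $q^*$ via the natural enumeration of $s_a$. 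For any pairwise-$\simeq$ $\{a_1,\ldots,a_m\}\subset Y$, set $R=\bigcup_{i<j}(s_{a_i}\cap s_{a_j})\subset\bigcup_{i<j}W(a_i\cap a_j)$; the off-root pieces $s_{a_i}\setminus R$ are pairwise disjoint, so the corresponding coordinates are independent and Fubini yields
\[
\mu\Bigl(\bigcap_{i<m}q_{a_i}\Bigr)=\mathbb{E}_{\sigma\in 2^R}\Bigl[\prod_{i<m}\mu_{s_{a_i}\setminus R}\bigl(q_{a_i}^{(\sigma)}\bigr)\Bigr].
\]
Type-uniformity together with the $\simeq$-invariance (which is the identity on the intersections that underlie $R$) forces the root-projections of $q_{a_1},\ldots,q_{a_m}$ to align, so a positive-measure set of $\sigma$ makes every conditional fiber non-null; this gives a positive lower bound $c=c(q^*,k,m)>0$ for $\mu(\bigcap_i q_{a_i})$ that is independent of the chosen $m$-tuple. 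Choosing $\delta<c/(2m)$ then delivers $\mu(\bigcap_i p_{a_i})>0$, providing a common lower bound in $\mathbb{M}$.

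The main technical obstacle is the last step, controlling $m$-wise coherence when distinct pairs $(a_i,a_j)$ produce genuinely different roots $W(a_i\cap a_j)$. A naive type-stabilization can fail to suppress bad patterns: if $q^*\subset 2^k$ is XOR-like, the three-wise intersection may vanish when the pairwise roots form a ``triangle'', so an additional Ramsey-style refinement of $Y$ may be required to exclude such unsatisfiable configurations before the Fubini estimate is applied. The structural properties of $W$ from Lemma \ref{lemma: rootsDelta}—especially the initial-segment nesting $F_{a'}\subseteq F_a$ and the containment $F_{a^\frown\eta_0}\cap F_{a^\frown\eta_1}\subseteq F_a$—should be central here, as they constrain the combinatorics of the collective root $R$ and keep the coherence estimate uniform over the choice of pairwise-$\simeq$ $m$-tuple.
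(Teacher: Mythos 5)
Your plan diverges from the paper in a substantive way, and the divergence is precisely where the proof gets stuck. You approximate each $p_a$ by a \emph{clopen} set $q_a$ with finite support $s_a$ and small symmetric-difference error, then apply the weak $\Delta$-system lemma to the finite sets $s_a$. The paper instead applies Theorem \ref{theorem: nDWeakDelta} to the full \emph{countable} supports $A_a=supp(F_a)$, and then uses the Lebesgue density theorem to choose a finite partial assignment $s_a$ (a single cylinder condition, not a clopen set) with $\mu_{[s_a]}(F_a)>f(m,0.1)$, where $f$ is the threshold function of Lemma \ref{lemma: intersection} fixed \emph{in advance}, depending only on $m$. The heart of the paper's argument is the claim $(dom(s^*)\setminus dom(s_{a_l}))\cap A_{a_l}=\emptyset$: once the patterns $w_i$ are stabilized, any coordinate $\eta$ appearing in some $s_{a_{l'}}$ that also lies in $A_{a_l}$ must already be in $dom(s_{a_l})$ with the same value. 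This dichotomy (either $\eta\in dom(s_{a_l})$ with the agreed value, or $F_{a_l}$ is independent of $\eta$) feeds Lemma \ref{lemma: enlarging} and shows $\mu_V(F_{a_l})=\mu_{[s_{a_l}]}(F_{a_l})$, after which Lemma \ref{lemma: intersection} finishes. In the paper's set-up, the conditions $s_a$ are single assignments, so ``alignment'' on overlaps is automatic in exactly the way it is for Cohen forcing.

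The gap you flag yourself is real and not merely cosmetic. Because your $q_a^\flat\subset 2^{s_a}$ is an arbitrary clopen shape rather than a single cylinder, stabilizing the type $q^*$ and the per-position labels does \emph{not} make the projections $\pi_{s_{a_i}\cap R}(q_{a_i}^\flat)$ simultaneously realizable: each such projection only has measure at least $\mu(q_{a_i})\geq\varepsilon-\delta$ inside $2^{s_{a_i}\cap R}$, and for $m$ large (or $\varepsilon-\delta<1/m$) the corresponding good-$\sigma$ sets $A_i\subset 2^R$ can have empty intersection. Nothing you have proved rules out the XOR-type triangle pattern you describe, and the ``additional Ramsey-style refinement'' you gesture at is exactly where a new idea is required; invoking the nesting properties of Lemma \ref{lemma: rootsDelta} is a hint, not an argument. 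There is also a quantifier circularity you cannot close: you want $\delta<c(q^*,k,m)/(2m)$, but $k$ (the support size of the $\delta$-approximation) and hence $q^*$ and $c$ are only determined after $\delta$ has been fixed and the $\sigma$-complete shrinking performed. The Lebesgue-density route avoids this entirely: the target threshold $f(m,0.1)$ is chosen first and depends on nothing but $m$, and the density theorem then produces an $s_a$ meeting it, with no back-reference from the bound to the size of $s_a$.
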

By Maharam's theorem \cite{maharam}, we may assume that $\mathbb{M}= (2^\lambda, \mu)$ where $\mu$ is the product $(\frac{1}{2},\frac{1}{2})$-probability measure. Fix such $\mathbb{M}$. For each $A\in \mathbb{M}$, there exist a countable $D\subset \lambda$ and $A'\subset 2^D$ such that for almost all $f\in 2^\lambda$, $f\in A$ iff $f\restriction D\in A'$. We call the $<^*$-least such $D$ \emph{the least support of $A$}, written as $D=supp(A)$, where $<^*$ is some well ordering of $H(2^{2^\lambda})$. Any $D$ satisfying above is called \emph{a support of $A$}. For each $A\in \mathbb{M}$ and a basic open set $U$, let $\mu_{U}(A)= \dfrac{\mu(A\cap U)}{\mu(U)}$ denote the measure $\mu$ relative to $U$. 

\begin{lemma}\label{lemma: enlarging}
Let $A\in \mathbb{M}$ and $U$ be a basic open set. Suppose $V$ is another basic open set. Let $S_A, S_U, S_V$ be some supports for $A, U ,V$ respectively. If $S_A\cap S_V =S_U\cap S_V=\emptyset$. Then $\mu_{U}(A)=\mu_{U \cap V} (A)$.
\end{lemma}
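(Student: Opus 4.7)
The plan is to reduce the claim to the standard fact that in a product probability measure, events whose supports are disjoint are independent. Concretely, I will show that $V$ is independent from both $U$ and $A\cap U$, from which the identity $\mu_U(A)=\mu_{U\cap V}(A)$ follows by dividing.

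First I would observe that since the hypotheses give $S_V\cap S_A=\emptyset$ and $S_V\cap S_U=\emptyset$, the set $S_A\cup S_U$ is a support of $A\cap U$ and is disjoint from $S_V$. Using the product structure of $\mu$ on $2^\lambda$ and the fact that supports are countable, each of $A\cap U$ and $V$ is (up to null sets) determined by the projection to a disjoint collection of coordinates. By Fubini applied to the product decomposition $2^\lambda\cong 2^{(S_A\cup S_U)}\times 2^{S_V}\times 2^{\lambda\setminus(S_A\cup S_U\cup S_V)}$, this immediately yields the two independence equalities
\begin{equation*}
\mu\bigl((A\cap U)\cap V\bigr)=\mu(A\cap U)\cdot\mu(V),\qquad \mu(U\cap V)=\mu(U)\cdot\mu(V).
\end{equation*}

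Next I would note that since $U$ and $V$ are basic open sets in $2^\lambda$, each is determined by finitely many coordinates and hence has strictly positive measure; moreover $\mu(U\cap V)=\mu(U)\mu(V)>0$, so the conditional measures appearing in the statement are well-defined. Dividing the first displayed equation by the second gives
\begin{equation*}
\mu_{U\cap V}(A)=\frac{\mu(A\cap U\cap V)}{\mu(U\cap V)}=\frac{\mu(A\cap U)\mu(V)}{\mu(U)\mu(V)}=\frac{\mu(A\cap U)}{\mu(U)}=\mu_U(A),
\end{equation*}
which is the desired equality.

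There is no real obstacle: the only point requiring a small amount of care is the justification that $S_A\cup S_U$ (rather than the least support of $A\cap U$) can be used to witness the independence. This is fine because any superset of a support is itself a support in the relevant sense, so Fubini on the disjoint product decomposition applies directly.
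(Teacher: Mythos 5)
Your proof is correct and follows essentially the same route as the paper's: observe that $S_A\cup S_U$ supports $A\cap U$ and is disjoint from $S_V$, deduce the two independence identities from the product structure of $\mu$, and divide. The extra remarks about positivity of $\mu(U\cap V)$ and supersets of supports are fine but not needed beyond what the paper already does.
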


\begin{proof}
Since $B=S_A\cup S_U$ is a support for $A\cap U$, by the hypothesis, we know that $B\cap S_V=\emptyset$. Therefore, $\mu(A\cap U \cap V)= \mu(A\cap U)\cdot \mu(V)$. Similarly, $\mu(U\cap V)=\mu(U)\cdot \mu(V)$.
Hence $\mu_{U\cap V}(A)=\dfrac{\mu(A\cap U\cap V)}{\mu(U\cap V)}=\dfrac{\mu(A\cap U)\cdot \mu(V)}{\mu(U)\cdot \mu(V)}=\mu_U(A)$.
\end{proof}

\begin{lemma}\label{lemma: intersection}
For each $m\in \omega-\{0\}$ and $\epsilon\in (0,1)$, there exists some $f(m,\epsilon)\in (0,1)$ such that for any $\{A_i: i<m\}\subset \mathbb{M}$ such that $\mu(A_i)>f(m,\epsilon)$, then $\mu(\bigcap_{i<m} A_i)>\epsilon$.
\end{lemma}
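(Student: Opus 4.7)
The plan is to reduce to a union bound on the complements. Note that
\[
\mu\Bigl(\bigcap_{i<m} A_i\Bigr) \;=\; 1 - \mu\Bigl(\bigcup_{i<m} A_i^c\Bigr) \;\geq\; 1 - \sum_{i<m}\mu(A_i^c) \;=\; 1 - \sum_{i<m}\bigl(1-\mu(A_i)\bigr),
\]
where the inequality is subadditivity of $\mu$. So if every $\mu(A_i) > f$ for some common threshold $f \in (0,1)$, the right-hand side strictly exceeds $1 - m(1-f)$.

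I would therefore set
\[
f(m,\epsilon) \;=\; 1 - \frac{1-\epsilon}{m}
\]
for $m \geq 1$ (and declare $f(0,\epsilon)$ to be any element of $(\epsilon,1)$, since $\mu(\bigcap_{i<0}A_i) = \mu(2^\lambda) = 1 > \epsilon$). With this choice, $f(m,\epsilon) \in (0,1)$ whenever $\epsilon \in (0,1)$, and the bound above yields
\[
\mu\Bigl(\bigcap_{i<m} A_i\Bigr) \;>\; 1 - m\cdot\frac{1-\epsilon}{m} \;=\; \epsilon,
\]
as required.

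There is no serious obstacle: the only content is countable additivity (hence subadditivity) of $\mu$, which holds in any measure algebra and in particular in $(2^\lambda,\mu)$.
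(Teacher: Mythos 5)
Your proof is correct, and it takes a genuinely different route from the paper. The paper defines $f(m,\epsilon)$ recursively (roughly, $f(m+1,\epsilon)=\max\{f(m,f(2,\epsilon)),f(2,\epsilon)\}$, reducing the $m+1$-fold intersection to an $m$-fold intersection plus one more set) and never produces a closed form. Your argument instead applies a union bound to the complements, yielding the explicit threshold $f(m,\epsilon)=1-\tfrac{1-\epsilon}{m}$ directly from finite subadditivity of $\mu$. This is simpler, shorter, and gives a concrete formula where the paper's induction gives only existence; it also sidesteps the slight awkwardness in the paper's recursion of needing $f(2,\epsilon)$ as a building block before the general inductive step kicks in. Both proofs use nothing beyond additivity of the measure, so neither is more general in substance, but yours is the more transparent of the two.
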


\begin{proof}
Let us verify that $f(m,\epsilon)=\frac{\epsilon+m-1}{m}$ works. Given $\langle A_i: i<m\rangle$ with $\mu(A_i)>f(m,\epsilon)$, we know that $\mu(\bigcup_{i<m} A_i^c)\leq \Sigma_{i<m} \mu(A_i^c) < \Sigma_{i<m} 1-\frac{\epsilon+m-1}{m}=1-\epsilon$. Therefore, $\mu(\bigcap_{i<m} A_i)=\mu((\bigcup_{i<m} A_i^c)^c)>\epsilon$.
\end{proof}

\begin{proof}[Proof of Theorem \ref{theorem: Random}]
Let $X\in (J^n)^+$ and $\langle F_{a}\in \mathbb{M}: a\in X\rangle$ be given. For each $a\in X$, let $A_a=supp(F_a)$. We will slightly abuse the notations in the following in that for $F\subset \lambda$, we naturally identify a subset $B\subset 2^F$ as a subset in $2^\lambda$, which is $B'=\{f\in 2^\lambda: f\restriction F\in B\}$. Hence when we write $\mu(B)$, we really mean $\mu(B')$.

Apply Theorem \ref{theorem: nDWeakDelta} to $\langle A_a: a\in X\rangle$ to get $J^n$-positive $Y\subset X$ and $W: [\kappa]^{\leq n-1}\to [\lambda]^{\aleph_0}$ such that for any $a\simeq b\in Y$, $A_a\cap A_b\subset W(a\cap b)$.
 
By shrinking $Y$ to a $J^n$-positive subset if necessary, we may assume that there is $\delta<\omega_1$ such that each $A\in range(W\restriction Y)$ satisfies $otp(A)\leq \delta$. The reason is that for any $a\in Y$, we can associate it with $\delta_a=otp(W(a))<\omega_1$. As $J^n$ is $\aleph_2$-complete, there is a positive subset of $Y$ on which the function $a\mapsto \delta_a$ is constant.

 For each such $A\in range(W\restriction Y)$, we let $\langle A(k): k<otp(A)\rangle$ be its increasing enumeration.
For each $a\in Y$, apply the Lebesgue density theorem to find $s_a$ which is a finite partial function from $A_a$ to $2$ such that $\dfrac{\mu(F_a\cap [s_a])}{\mu([s_a])}>f(m,0.1)$, where $f$ is the function given by the conclusion of Lemma \ref{lemma: intersection}. From a canonical enumeration of $P(n)$, we can induce a canonical enumeration $\langle a^i: i<2^n\rangle$ of $P(a)$ for any $a$ of size $n$.
For each $a\in Y$ and for each $i<2^n$, define $w_i^a$ records the index of $dom(s_a)\cap W(a^i)$. Namely, $w_i^a=\{(j,n): j<\delta, W(a^i)(j)\in dom(s_a), s_a(W(a^i)(j))=n\}$. Find $J^n$-positive $Y'\subset Y$, $\langle w_i: i<2^n\rangle$ and $z\in \omega$ such that for each $a \simeq a'\in Y'$ 
\begin{itemize}
\item $|s_a|=z$,
\item for each $i<2^n$, $w^a_i=w_i$.
\end{itemize} 
We can do this by the $\sigma$-completeness of $J^n$. Given $\langle a_l: l<m\rangle\subset Y'$ with $a_l\simeq a_{l'}$ for any $l,l'<m$, we demonstrate that $\mu(\bigcap_{l<m} F_{a_l})>0$. 

Let $s^*=\bigcup_{l<m}s_{a_l}$ and $V=\bigcap_{l<m}[s_{a_l}]=[s^*]$. Note that  $s^*$ is a function, reasoning as in Theorem \ref{theorem: Cohen}.

We next show that $\mu_V(F_{a_l})=\mu_{[s_{a_l}]}(F_{a_l})$ for any $l<m$. To see this, fix $l<m$.  

	\begin{claim}
	$(dom(s^*)-dom(s_{a_l}))\cap A_{a_l} =\emptyset$.
	\end{claim}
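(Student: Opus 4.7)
The plan is to prove the claim by contradiction: suppose there exists $\eta \in (dom(s^*) - dom(s_{a_l})) \cap A_{a_l}$. Since $\eta \in dom(s^*) = \bigcup_{l'<m} dom(s_{a_{l'}})$, there is some $l' \neq l$ with $\eta \in dom(s_{a_{l'}})$. Because $s_{a_{l'}}$ is a partial function from $A_{a_{l'}}$ to $2$, we have $\eta \in A_{a_{l'}}$. Combined with $\eta \in A_{a_l}$, this gives $\eta \in A_{a_l} \cap A_{a_{l'}}$.

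Next I would invoke the $\Delta$-system property obtained from Theorem~\ref{theorem: nDWeakDelta}: since $a_l \simeq a_{l'}$ and both lie in $Y \supseteq Y'$, we have $A_{a_l} \cap A_{a_{l'}} \subseteq W(a_l \cap a_{l'})$. Using the isomorphism $a_l \simeq a_{l'}$, fix the (unique) index $i < 2^n$ such that $a_l^i = a_{l'}^i = a_l \cap a_{l'}$. Then $\eta \in W(a_l^i) = W(a_{l'}^i)$.

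Now I would use the stabilization of the codes $\langle w^a_i : i < 2^n \rangle$ arranged during the pruning to $Y'$. Since $a_l, a_{l'} \in Y'$, we have $w^{a_l}_i = w^{a_{l'}}_i = w_i$. Recall that $w^a_i$ records exactly the indices $j < otp(W(a^i))$ for which $W(a^i)(j) \in dom(s_a)$ together with the value $s_a$ takes there. Because $W(a_l^i)$ and $W(a_{l'}^i)$ are literally the same set with the same enumeration, equality of $w^{a_l}_i$ and $w^{a_{l'}}_i$ forces
\[
dom(s_{a_l}) \cap W(a_l^i) = dom(s_{a_{l'}}) \cap W(a_{l'}^i).
\]
Since $\eta \in dom(s_{a_{l'}}) \cap W(a_{l'}^i)$, we conclude $\eta \in dom(s_{a_l})$, contradicting the assumption that $\eta \notin dom(s_{a_l})$.

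The heart of the argument is the combination of two pieces: the weak-$\Delta$-system lemma traps the overlap of supports inside $W(a_l \cap a_{l'})$, and the stabilization of the patterns $w_i$ (possible by $\sigma$-completeness of $J^n$) ensures that $s_{a_l}$ and $s_{a_{l'}}$ are determined on $W(a_l^i)$ by the same recipe. Everything else is routine bookkeeping; there is no substantive obstacle beyond making sure the canonical enumeration of $P(a)$ used to define $a^i$ is coherent across $a_l \simeq a_{l'}$, which is immediate from the definitions.
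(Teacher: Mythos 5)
Your proof is correct and follows essentially the same route as the paper's: trap $\eta$ in $W(a_l\cap a_{l'})$ via the weak-$\Delta$-system conclusion, then use the stabilized codes $w_i=w_i^{a_l}=w_i^{a_{l'}}$ to transfer $\eta\in dom(s_{a_{l'}})\cap W(a_{l'}^i)$ into $\eta\in dom(s_{a_l})$, contradicting the hypothesis. The only difference is cosmetic: the paper works directly with the pair $(j,s^*(\eta))\in w_i$ rather than stating the equality of the intersections $dom(s_{a_l})\cap W(a_l^i)=dom(s_{a_{l'}})\cap W(a_{l'}^i)$, but the content is identical.
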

	\begin{proof}
	Suppose for the sake of contradiction that there is some $\eta\in A_{a_l} \cap  (dom(s_{a_{l'}})-dom(s_{a_l}))$. As a result, $\eta\in A_{a_l}\cap A_{a_{l'}}\subset W(a_{l}\cap a_{l'})$. Since $a_l\simeq a_{l'}$, there is some $i<2^n$ such that $a^*=a_{l}\cap a_{l'}=(a_l)^i=(a_{l'})^i$. Therefore, there exists $j<\delta$ such that $W(a^*)(j)=\eta$. In particular, $(j, s^*(\eta))\in w_i$. Since $w_i=w_i^{a_l}$, we know that $W((a_l)^i)(j)=W(a^*)(j)=\eta\in dom(s_{a_l})$, which is a contradiction.
	\end{proof}
	
By the previous claim, we can apply Lemma \ref{lemma: enlarging} to $F_{a_l}$, $[s_{a_l}]$, $[s^*\restriction (dom(s^*)-dom(s_{a_l}))]$ to conclude that $\mu_V(F_{a_l})=\mu_{[s_{a_l}]}(F_{a_l})$. Finally, apply Lemma \ref{lemma: intersection} relative to $V$, we get $\mu_V(\bigcap_{l<m}F_{a_l})>0.1>0$, as desired.\end{proof} 

Theorem \ref{theorem: CohenRandom} follows from Theorem \ref{theorem: sigma-m-linked} and \ref{theorem: Random}.

We finish this section by showing if the quotient algebra by an ideal on $\kappa$ satisfies the higher dimensional chain conditions, then $\kappa$ satisfies a certain Ramsey-type properties that have implications in the vanishing of higher derived limits of a certain inverse system. We refer the readers to \cite{bannister2023descriptive} and \cite{MR4275058} for more information concerning the historical background and the homological applications. The following definitions are from \cite{bannister2023descriptive}.

\begin{definition}
A function $F: [\kappa]^{\leq n}\to \kappa$ is cofinal if 
	\begin{enumerate}
	\item for any $\alpha\in \kappa$, $\alpha < F(\alpha)$,
	\item for any $x \subsetneq y$, $F(x) < F(y)$.
	\end{enumerate}
\end{definition}

\begin{definition}
Let $[\kappa]^{[n]}$ denote the set $$\{ \langle \sigma(i): i<n\rangle | \sigma(i)\in [\kappa]^{i+1}, \sigma(i)\subset \sigma(i+1) \text{ for all }i<n-1\}.$$
\end{definition}

Given $a\in [\kappa]^n$, let $\bar{\sigma}_a $ be the collection of $\bar{\sigma}\in [\kappa]^{[n]}$ such that $a =\bigcup_{i<n} \sigma(i)$. We call $\bar{\sigma}\in \bar{\sigma}_a$ an \emph{enumeration} of $a$.

\begin{definition}
Given $F: [\kappa]^{\leq n} \to \kappa$, define $F^*: [\kappa]^{[n]}\to [\kappa]^n$ as follows: $F^*(\sigma)=\{F(\sigma(i)): i<n\}$
\end{definition}

\begin{definition}\label{definition: PH}
$\mathrm{PH}_n(\kappa)$ abbreviates: for any $c: [\kappa]^{n+1}\to \omega$, there exists a cofinal $F: [\kappa]^{\leq n+1}\to \kappa$, such that $c\circ F^* $ is constant.
\end{definition}

\begin{theorem}\label{theorem: PH}
Let $I$ be a uniform normal ideal on $\kappa$. Suppose that $P(\kappa)/I$ is $I^n$-$m$-linked for all $n,m\in \omega$.
	Then $\mathrm{PH}_n(\kappa)$ holds for all $n\in \omega$.
\end{theorem}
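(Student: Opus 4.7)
The plan is to prove by induction on $n$ a slight strengthening of $\mathrm{PH}_n(\kappa)$: for every $X\in (I^{n+1})^+$ there is a cofinal injective map $F\colon [\kappa]^{\leq n+1}\to\kappa$ with $F^*(\sigma)\in X$ for every $\sigma\in [\kappa]^{[n+1]}$. Applying this to $X=c^{-1}(k)$ for any $k$ with $c^{-1}(k)\in (I^{n+1})^+$---existence ensured by $\kappa$-completeness of $I^{n+1}$, which follows from Lemma~\ref{lemma: highernormality}---then yields $\mathrm{PH}_n(\kappa)$. The base case $n=0$ is immediate: $X$ is cofinal in $\kappa$ by uniformity of $I$, so one may pick $F(\{\alpha\})\in X$ above $\alpha$ and above all previously committed values, keeping $F$ cofinal and injective.

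For the inductive step $n\to n+1$, fix $X\in (I^{n+2})^+$, pruned so that every proper slice $(X)_a$ lies in $(I^{n+2-|a|})^+$. Apply the hypothesis that $P(\kappa)/I$ is $I^{n+1}$-$m$-linked for every $m\in\omega$ to the family $\langle X_a\colon a\in X{\restriction}(n+1)\rangle$ of conditions in $P(\kappa)/I$, producing $Y\subseteq X{\restriction}(n+1)$ in $(I^{n+1})^+$ such that $\bigcap_j X_{a_j}\in I^+$ for every finite pairwise-$\simeq$ subset $\{a_j\}\subseteq Y$. The induction hypothesis applied to $Y$ produces a cofinal injective $F^-\colon [\kappa]^{\leq n+1}\to\kappa$ with $(F^-)^*(\sigma)\in Y$ for every $\sigma\in [\kappa]^{[n+1]}$. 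Enumerate $[\kappa]^{n+2}$ in order type $\kappa$ and extend $F^-$ stage by stage: for each $b\in [\kappa]^{n+2}$ set $a_\sigma:=(F^-)^*(\sigma{\restriction}(n+1))$ for each chain $\sigma\in [\kappa]^{[n+2]}$ with $\sigma(n+1)=b$, and pick $F(b)\in \bigcap_\sigma X_{a_\sigma}$ above $\max(b)$, above every $F^-(\tau)$ for $\tau\subsetneq b$, above every previously committed $F(b')$, and distinct from the image of $F^-$.

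The pivotal observation is that the $(n+2)!$ tuples $\{a_\sigma\colon \sigma(n+1)=b\}$ are pairwise $\simeq$: if $x\in a_\sigma\cap a_{\sigma'}$, then $x=F^-(\sigma(i))=F^-(\sigma'(j))$, and the injectivity of $F^-$ forces $\sigma(i)=\sigma'(j)$, hence $i=j$; cofinality of $F^-$ then places $x$ at sorted position $i$ in both $a_\sigma$ and $a_{\sigma'}$. Consequently, the refined linkedness of $Y$ makes $\bigcap_\sigma X_{a_\sigma}$ $I$-positive, hence cofinal in $\kappa$, and the choice of $F(b)$ always succeeds. The one technical obstacle I foresee is reconciling the avoidance of the $\kappa$-sized image of $F^-$ with the positivity of the color intersection; my plan is to thread this through the induction by further demanding that $F^-$ be \emph{level-typed}---that is, $F^-(\tau)\in A_{|\tau|-1}$ for a fixed sequence of pairwise disjoint $I^+$ sets $A_0,A_1,\ldots$ (enough of them for each finite $n$ are available by splitting in $P(\kappa)/I$)---so that $F(b)$ can be drawn from a fresh $A_{n+1}$, automatically disjoint from the image of $F^-$. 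Verifying that this is compatible with the linkedness refinement amounts to checking that the subfamily $\langle X_a\cap A_{n+1}\colon a\in S\rangle$ with $S=\{a\colon X_a\cap A_{n+1}\in I^+\}$ is indexed by an $(I^{n+1})^+$ set, which can be arranged by a suitable generic-ultrapower reflection using the normality of $I$.
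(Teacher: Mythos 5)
Your overall strategy is the paper's: choose a monochromatic $(I^{n+1})^+$ set, apply $m$-linkedness of $P(\kappa)/I$ to the family of slices to get a positive set on which pairwise $\simeq$ tuples have $I$-positive intersections of slices, and then build a cofinal $F$ recursively, using exactly the paper's key observation that injectivity plus monotonicity of $F$ on $\subsetneq$ forces the finitely many tuples $(F^-)^*(\sigma\restriction (n+1))$ associated to a given $b$ to be pairwise $\simeq$. The difference is purely organizational — you complete $F^-$ on all of $[\kappa]^{\leq n+1}$ before touching $[\kappa]^{n+2}$ — and that is precisely where the gap opens. At the top stage you must pick $F(b)$ inside an $I$-positive set while avoiding the entire image of $F^-$, which has size $\kappa$; an $I$-positive set minus a $\kappa$-sized set can be empty, so the unpatched construction does not go through, and some form of injectivity (at least level-separation) is genuinely needed for the pairwise-$\simeq$ claim at the next inductive stage. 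You correctly flag this, but your patch is only a plan, and it has two unresolved problems. First, splitting into pairwise disjoint $I$-positive sets $A_0,A_1,\dots$ is impossible if $P(\kappa)/I$ has an atom, a case the hypotheses do not exclude (it is degenerate — it makes $\kappa$ measurable — but it must be addressed or ruled out). Second, and more seriously, the compatibility of level-typing with the rest of the argument is asserted rather than proved: you would need to fix the $A_j$ in advance, strengthen the inductive statement so that $F^-$ is level-typed relative to $A_0,\dots,A_n$, and verify at \emph{every} level $j$ that the relevant slices meet $A_j$ positively on an $(I^{j})^+$ set of indices, coherently with the shrinking already performed by the linkedness application; a positive $X$ can concentrate all of its coordinates on a single positive set, so this requires choosing the $A_j$ \emph{after} (and depending on) $X$, level by level, not merely "by splitting in $P(\kappa)/I$."

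The paper dissolves this obstruction rather than patching it: it defines $F$ on all of $[\kappa]^{\leq n+1}$ by a single recursion along the colexicographic-type well-order $<_l$ (ordering sets primarily by their maxima), and requires $F$ to be strictly $<_l$-increasing. Since every $<_l$-initial segment of $[\kappa]^{\leq n+1}$ has size $<\kappa$, at each stage only a bounded set of values has been committed, so $F(a)$ can always be taken large enough inside the $I$-positive intersection, and injectivity comes for free. I recommend you replace the level-by-level completion with this interleaved recursion; your pivotal observation and your use of linkedness then close the argument exactly as in the paper, with no need for level-typing or disjoint positive sets.
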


\begin{proof}
Fix a coloring $c: [\kappa]^{n+1}\to \omega$.
For each $a\in [\kappa]^n$, by the $\sigma$-completeness of $I$, find $i_a\in \omega$, such that $X_a =\{\gamma: c(a\cup \{\gamma\})=i_a\}\in I^+$. Let $l=n!$.

By the hypothesis on $P(\kappa)/I$, we can find $X\in (I^n)^+$ such that
%\todo{maybe you need to clarify how}
\begin{itemize}
\item there is $i\in \omega$, for all $a\in X$, $i_a=i$, 
\item for any $\{a_i: i<l\} \subset X$ such that $a_i\simeq a_j$ for any $i,j<l$, $\bigcap_{i<l} X_{a_i} \in I^+$, and 
\item for any $k<n$ and any $\langle c_i: i<l\rangle \subset X\restriction k$ such that $c_i\simeq c_j$ for any $i,j<l$, then $\bigcap_{i<l}(X\restriction k+1)_{c_i}\in I^+$.
\end{itemize}

Let $<_l$ be the following ordering on $[\kappa]^{\leq n+1}$ defined recursively: $a<_l b$ iff $a\neq b$ and either $a=\emptyset$ or $\max a < \max b$ or $\max a =\max b$ and $a-\{\max a\}<_l b-\{\max b\}$.

We will define $F$ on $[\kappa]^{\leq n+1}$ recursively such that in addition to being cofinal it satisfies the following: 

	\begin{enumerate}
	\item for $a<_l b$, $F(a)<F(b)$,
	\item for any $k\leq n+1$, $a \in [\kappa]^{k}$ and $\bar{\sigma}\in \bar{\sigma}_a$, $F(a)\in (X\restriction k)_{\langle F(\sigma(i)): i<k-1\rangle}$
	\end{enumerate}

Suppose the construction is successful. Let us check that $c\circ F^*\equiv \{i\}$. Given some $\bar{\sigma}\in [\kappa]^{[n+1]}$, $c(F^*(\bar{\sigma}))=c(\langle F(\sigma(i)): i<n+1\rangle)$. By property (2), we know that $F(\sigma(n))\in (X)_{F(\sigma(0)), F(\sigma(1)),\cdots, F(\sigma(n-1))}$, which means $c(F(\sigma(0)), F(\sigma(1)),\cdots, F(\sigma(n-1)), F(\sigma(n)))=i$ as desired.

Finally, let us check that the construction can be carried out. Suppose we are at the stage of defining $F(a)$ for $a\in [\kappa]^k$ where $F(c)$ for $c<_l a$ has been defined and the two properties are satisfied.
It suffices to show that $\bigcap_{\bar{\sigma}\in \bar{\sigma}_a} (X\restriction k)_{\langle F(\sigma(i)): i<k-1\rangle}\in I^+$. Since we can then define $F(a)$ to be a large enough element in this intersection.
By the property of $X$, we only need to show for any $\bar{\sigma}_0, \bar{\sigma}_1\in \bar{\sigma}_a$, $\langle F(\sigma_0(i)): i<k-1\rangle \simeq \langle F(\sigma_1(i)): i<k-1\rangle$. To see this, note that by property (1), $F \restriction \{c: c<_l a\}$ is strictly increasing. Hence if there are $h,l$ such that $F(\sigma_0(h))=F(\sigma_1(l))$, it must be that $h=l$ and $\sigma_0(h)=\sigma_1(l)$. As a result, $\langle F(\sigma_0(i)): i<k-1\rangle \simeq \langle F(\sigma_1(i)): i<k-1\rangle$ as desired.
\end{proof}

\begin{corollary}\label{cor: applicationRandom}
Let $J$ be a uniform normal ideal on $\kappa$ such that $P(\kappa)/J \simeq \mathbb{M}$ where $\mathbb{M}$ is a non-trivial measure algebra,
then $\mathrm{PH}_n(\kappa)$ holds for all $n\in \omega$. In particular, if $\kappa$ is real-valued measurable, then $\mathrm{PH}_n(\kappa)$ holds for all $n\in \omega$.
\end{corollary}

\begin{proof}
This follows from Theorem \ref{theorem: Random} and Theorem \ref{theorem: PH}.
\end{proof}

\begin{corollary}\label{cor: applicationSigmaCentered}
Let $\kappa$ be a measurable cardinal and $P$ be a finite-support iteration of $\sigma$-centered posets of length $\kappa$. Then in $V^{P}$, $\mathrm{PH}_n(\kappa)$ holds for all $n\in \omega$.
\end{corollary}

\begin{proof}
Fix $\kappa$-complete normal measure $U$ on $\kappa$. Let $j: V\to M\simeq Ult(V,U)$ be the ultrapower embedding derived from $U$.
Let $P$ be a finite-support iteration of $\sigma$-centered posets of length $\kappa$. Let $G\subset P$ be generic over $V$. We show that in $V[G]$, letting $J$ be the ideal on $\kappa$ defined by $X\in J$ iff $\Vdash_{j(P)/G} \kappa\not\in j(\dot{X})$. It is routine to check that $J$ is $\kappa$-complete, normal and $\aleph_1$-saturated. By the elementarity of $j$, $M[G]\models j(P)/G$ is a finite-support iteration of $\sigma$-centered posets. As in the proof of Theorem \ref{theorem: sigma-m-linked}, in $M[G]$ there exists a map $g: j(P)/G\to Add(\omega, j(\kappa))$ such that any finite $\{p_i:i<k\}\subset j(P)/G$ admits a lower bound in $j(P)/G$ whenever $\{g(p_i): i<k\}$ admits a lower bound in $Add(\omega, j(\kappa))$. It is easy to see that such property of $g$ also holds in $V[G]$. Applying Theorem \ref{theorem: Cohen} to $Add(\omega, j(\kappa))$ and $J$ in $V[G]$, we can conclude that $j(P)/G$ is $J^n$-$m$-linked for all $m,n\in \omega$. Finally, we show that $P(\kappa)/J\simeq \mathbb{B}(j(P)/G)$. Consider the map $f: P(\kappa)/J\to \mathbb{B}(j(P)/G)$ such that $f(X)=\llbracket \kappa\in j(\dot{X}) \rrbracket$. It is clear that $f$ is an order preserving embedding. We check that the image of $f$ is dense in $\mathbb{B}(j(P)/G)$, which is clearly sufficient. Let $p\in j(P)/G$. Fix some $l\in V$ such that $p=j(l)(\kappa)$. Consider $X=\{\alpha<\kappa: l(\alpha)\in G\}$. Note that $X\in J^+$ since $p\Vdash_{j(P)/G} \kappa\in j(\dot{X})$. We claim that $f(X)\leq_{\mathbb{B}(j(P)/G)} p$. Let $H\subset \mathbb{B}(j(P)/G)$ be generic over $V[G]$ containing $f(X)$. Then in $V[G*H]$, letting $j^+: V[G]\to M[G*H]$ be the canonical lift of $j$, we get that $\kappa\in j^+(X)= (j(\dot{X}))^{G*H}$. By the definition of $X$, we know that $p=j(l)(\kappa)\in j^+(G)=G*H$. As a result, $f(X)\leq_{\mathbb{B}(j(P)/G)} p$.
To summarize, we have shown that in $V[G]$, $J$ is a uniform normal ideal on $\kappa$ such that $P(\kappa)/J\simeq \mathbb{B}(j(P)/G)$ is $J^n$-$m$-linked for all $m,n\in \omega$. Apply Theorem \ref{theorem: PH} to finish.\end{proof}
Previously in \cite{bannister2023descriptive} building on \cite{MR4275058}, it was shown that $\mathrm{PH}_n(\kappa)$ holds for all $n\in \omega$ in the forcing extension by any $\kappa$-length finite support iteration of Hechler forcing over a ground model where $\kappa$ is a weakly compact cardinal.

To finish this section, we remark on the situation on longer iterations. 

\begin{definition}
A poset $P$ is \emph{$\sigma$-centered-plus} if 
	\begin{enumerate}
	\item $P$ is $\sigma$-centered witnessed by $\bigcup_{n\in\omega} P_n$;
	\item for any $p\in P$, $n\in \omega$ and finite $\{q_i: i<k\}\subset P_n$, if $p || q_i$ for each $i<k$, then there is a lower bound for $\{p, q_i: i<k\}$;
	\item for any $p || q\in P$, there exists a greatest lower bound $p\wedge q\in P$. Namely, there is a condition $r\leq p,q$ such that for any $r'\leq p, q$, we have that $r'\leq r$.
	\end{enumerate}
\end{definition}

Examples of $\sigma$-centered-plus posets include any Cohen forcing for adding $\leq 2^\omega$ subsets of $\omega$, Hechler forcing, ultrafilter Mathias forcing and so on.

\begin{corollary}\label{cor: longer}
Let $\kappa$ be a measurable cardinal and $P$ be a finite-support iteration of $\sigma$-centered-plus posets of length $\lambda$ for some ordinal $\lambda$. Then in $V^P$, $\mathrm{PH}_n(\kappa)$ holds for all $n\in \omega$.
\end{corollary}
\begin{proof}
We may assume $\lambda>\kappa$.
We follow the notations and argument as in Corollary \ref{cor: applicationSigmaCentered}. The main difference, which is also the reason for considering the plus-strengthening of $\sigma$-centered-ness, is that we need to show that in $V[G]$, $j(P)/j''G$ is $J^n$-$m$-linked for all $n, m\in \omega$ but $j'' P $ is not necessarily a member of $M$. Hence, the previous argument using the elementarity of $j$ does not work. To overcome this, we proceed as follows. Let $g\in M$ be the function $g: j(P)\to Add(\omega, j(\lambda))$ as in the proof of Theorem \ref{theorem: sigma-m-linked}. Namely, $\{q_i\in j(P): i<k\}$ admits a lower bound whenever $\{g(q_i): i<k\}$ admits a lower bound in $Add(\omega,j(\lambda))$. We claim that in $V[G]$, the property above for $g$ still holds when $j(P)$ is replaced with $j(P)/j''G$ everywhere. 
Given $\bar{q}=\{q_i: i<k\}\subset j(P)/j''G$, with $r_i=g(q_i)$ and $r^*=\bigcup_{i < k} r_i\in Add(\omega,j(\lambda))$, we need to find a lower bound in $j(P)/j''G$. 

Let us work in $V$ and suppose otherwise for the sake of contradiction. Let $p\in P$ forcing all the statements above about $\bar{q}$ and that $\bar{q}$ does not admit a lower bound in $j(P)/j''\dot{G}$. We will recursively define some $r\in j(P)$ extending $\{j(p), q_i: i<k\}$. This is enough since if $H\subset j(P)$ is $V$-generic containing $r$, then $G'=j^{-1}(H)$ is $V$-generic containing $p$. But then $r$ is a lower bound for $\bar{q}$ in $j(P)/j''G'$, contradicting with our assumption. Let us construct such $r$ recursively. We will make sure that for each $\alpha<j(\lambda)$, 
\begin{enumerate}
\item $r\restriction \alpha\leq j(p)\restriction \alpha, q_i\restriction \alpha$ for all $i<k$, and
\item $r\restriction \alpha \Vdash_{j(P)\restriction \alpha} j(p)\downharpoonright \alpha$ is compatible in $j(P)\downharpoonright \alpha$ with $q_i\downharpoonright \alpha$ for any $i<k$.
\end{enumerate}
The construction when $\alpha\not\in support(j(p))$ is clear. Let us demonstrate the construction when $\alpha\in support(j(p))$, in which case there is $\beta$ such that $j(\beta)=\alpha$. First note that $r\restriction \alpha\Vdash j(p)(\alpha)$ is compatible with $\{q_i(\alpha): i<k\}\subset \dot{Q}_{\alpha,r^*(\alpha)}$. By the $\sigma$-centered-plus assumption, we can find a $j(P)\restriction \alpha$-name $\dot{\tau}\in \dot{Q}_\alpha$ such that $r\restriction \alpha \Vdash\dot{\tau}\leq \{j(p)(\alpha), q_i(\alpha): i<k\}$. Let us define $r\restriction \alpha+1$ to be $r\restriction \alpha ^\frown \dot{\tau}$. Let us verify the second requirement. By an inductive argument, one can show that $r\restriction \alpha \Vdash$ ``the greatest lower bound $\dot{\sigma}_i=(j(p)\downharpoonright \alpha) \wedge (q_i\downharpoonright \alpha)$ exists in $j(P)\downharpoonright\alpha$ for any $i<k$".
As a result, $r\restriction \alpha \Vdash \dot{\tau}\leq \dot{\sigma}_i(\alpha)$ for all $i<k$. Therefore, $r\restriction \alpha+1 \Vdash j(p)\downharpoonright \alpha+1$ is compatible in $j(P)\downharpoonright \alpha+1$ with $q_i\downharpoonright \alpha+1$ for any $i<k$.\end{proof}

\section{A rectangular strengthening of the $\sigma$-finite-chain-condition}\label{section: wedge}

In this section we are motivated by the following natural question: will satisfying a certain 1-dimensional strong chain condition have higher dimensional chain condition consequences?

Recall the following definition.

\begin{definition}[Horn and Tarski, \cite{horntarski}]
A poset $P$ satisfies \emph{$\sigma$-finite-c.c} if there exists a decomposition $P=\bigcup_{n\in \omega} P_n$ such that for each $n\in \omega$, $P_n$ does not contain an infinite antichain in $P$.
\end{definition}

\begin{definition}\label{definition: compatibility}
Let $P$ be a poset and $p,q\in P$. Let $p || q$ denote $p$ and $q$ are compatible in $P$, and let $p\perp q$ denote that $\neg (p || q)$.
\end{definition}

This question is indeed natural in light of the following propositions.

\begin{proposition}\label{proposition: prop1d}
Any $\sigma$-finite-c.c poset $Q$ is $J$-Knaster, where $J$ is a uniform normal ideal on a regular uncountable cardinal $\lambda$.
\end{proposition}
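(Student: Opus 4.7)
The plan is to combine the countable completeness of $J$ (which follows from normality and uniformity on the regular uncountable cardinal $\lambda$) with a Fodor-type iteration that exploits the no-infinite-antichain property of each piece $Q_n$ in the decomposition $Q = \bigcup_{n \in \omega} Q_n$. First, given $X \in J^+$ and $\langle q_\alpha : \alpha \in X \rangle \subset Q$, countable completeness of $J$ lets me pass to a $J$-positive subset of $X$ on which every $q_\alpha$ lies in a single fixed $Q_n$. I will keep calling this set $X$.

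For this restricted family, I would define $f : X \to X \cup \{*\}$ by letting $f(\alpha)$ be the least $\beta \in X$ with $\beta < \alpha$ and $q_\alpha \perp q_\beta$ if such a $\beta$ exists, and setting $f(\alpha) = *$ otherwise. Split $X = X^0 \sqcup X^1$ according to whether $f(\alpha) = *$. If $X^0 \in J^+$ the proof is complete, since $\{q_\alpha : \alpha \in X^0\}$ is pairwise compatible. Otherwise $X^1 \in J^+$ and $f$ is regressive on $X^1$, so by Fodor's lemma (the normality of $J$) there is a $J$-positive $X' \subset X^1$ and a fixed $\beta^* \in X$ with $q_\alpha \perp q_{\beta^*}$ for every $\alpha \in X'$; note that $\beta^* \notin X'$.

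Next I would iterate this dichotomy, feeding $X'$ back in place of $X$. If no stage ever lands in the first alternative, the process produces elements $\beta^*_0, \beta^*_1, \ldots$ together with a descending chain of $J$-positive sets $X_0 \supsetneq X_1 \supsetneq \cdots$ such that $\beta^*_k \in X_k \setminus X_{k+1}$ and $q_\alpha \perp q_{\beta^*_k}$ for every $\alpha \in X_{k+1}$. For any $k < l$ one has $\beta^*_l \in X_l \subset X_{k+1}$, hence $q_{\beta^*_l} \perp q_{\beta^*_k}$, and therefore $\{q_{\beta^*_k} : k < \omega\}$ is an infinite antichain contained in $Q_n$, contradicting the choice of $n$. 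So some finite stage must land in the first alternative, and the $J$-positive pairwise compatible set it produces is the desired witness to the Knaster condition.

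The only point I expect to require care is verifying that the incompatibilities accumulated through countably many applications of Fodor's lemma genuinely assemble into a single antichain in $Q_n$: one needs each $\beta^*_l$ to remain in every earlier filtered set $X_{k+1}$, so that the meaning of $f$ at stage $k$ can be applied to it. This is automatic from the nested construction of the $X_k$'s, but it is the step where the argument would break down if one did not track the descending chain of $J$-positive sets carefully. Everything else is routine given $\lambda$-completeness and normality of $J$.
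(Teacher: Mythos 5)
Your proof is correct and follows essentially the same route as the paper's: reduce to a single piece $Q_n$ by countable completeness, then show that if no $J$-positive pairwise-compatible set emerges, a nested sequence of $J$-positive sets produces an infinite antichain inside $Q_n$, contradicting $\sigma$-finite-c.c. The only cosmetic difference is that you invoke normality via Fodor's lemma applied to the regressive ``least incompatible predecessor'' function, whereas the paper runs a ``goodness'' dichotomy and finishes with a diagonal intersection; these are the same use of normality in different packaging.
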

\begin{proof}
Let $\bigcup_{n\in \omega} Q_n=Q$ witness the $\sigma$-finite-c.c-ness of $Q$.
Given $X\in J^+$ and $\langle q_\alpha\in Q: \alpha\in X\rangle$, we need to find $J$-positive $Y\subset X$ such that for any $\alpha<\beta\in Y$, $q_\alpha || q_\beta$. We may assume without loss of generality that there is some $m\in \omega$ such that $q_\alpha\in Q_m$ by the $\sigma$-completeness of $J$.
For any $Z\in J^+$ and $\alpha<\lambda$, we say $\alpha$ \emph{is good for $Z$} if there exists $C\in J^*$ such that for all $\beta \in C\cap Z$, $q_\alpha || q_\beta$. 
	\begin{claim}
	There exists a $J$-positive $Y'\subset X$ such that any $\alpha\in Y'$ is good for $Y'$.
	\end{claim}
	\begin{proof}[Proof of the claim]
	Suppose otherwise for the sake of contradiction. We construct a sequence $\langle \alpha_i, X_i: i\in \omega\rangle$ such that 	
	\begin{enumerate}
	\item $X_{-1}=X$,
	\item $X_i\in J^+$ for all $i\in \omega$ and $X_{i+1}\subset X_i$,
	\item $\alpha_i\in X_{i-1}$ is not good for $X_{i-1}$ as witnessed by $X_i$. Namely, for any $\beta\in X_i$, $q_{\alpha_i}\perp q_\beta$.
	\end{enumerate}
	The construction is possible by the hypothesis. As a result, $\{q_{\alpha_i}: i\in \omega\}\subset Q_m$ is an infinite antichain in $Q$, which is a contradiction.
	\end{proof}
Fix $Y'\subset X$ as in the claim. For each $\alpha\in Y'$, find $C_\alpha\in J^*$ such that for all $\beta\in C_\alpha\cap Y'$, $q_\alpha || q_\beta$. Let $Y=Y'\cap \Delta_{\alpha\in Y'} C_\alpha\in J^+$ by the normality of $J$. It is easy to see that $Y$ is as desired.
\end{proof}

\begin{proposition}
Let $I$ be a normal uniform ideal on $\kappa$ and $J_\lambda=[\lambda]^{<\lambda}$ be the bounded ideal on $\lambda\leq \kappa$, where $\lambda, \kappa$ are regular cardinals $>\omega$.
If $P(\kappa)/I$ satisfies $\sigma$-finite-c.c, then $P([\kappa]^n)/I^n$ is $J_\lambda$-Knaster (or more commonly known as $\lambda$-Knaster) for all $n\in \omega$. 
\end{proposition}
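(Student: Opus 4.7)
The plan is to induct on $n\in\omega$ to show that $P([\kappa]^n)/I^n$ is $\sigma$-finite-c.c., whereupon the desired $J_\lambda$-Knasterness follows via Proposition~\ref{proposition: prop1d} (the normality of $J$ in that proof is only used to verify that a diagonal intersection is $J^*$; for $J=J_\lambda$, which is merely uniform on the regular uncountable $\lambda$, one builds the witnessing unbounded $Y\subset Y'$ by a straightforward transfinite recursion of length $\lambda$, since each $C_\alpha\in J_\lambda^*$ is a tail and $\lambda$ is regular). The base case $n=1$ is the hypothesis. For the inductive step, fix $\sigma$-finite-c.c.\ decompositions $P([\kappa]^n)/I^n = \bigcup_{l\in\omega} Q^n_l$ and $P(\kappa)/I = \bigcup_{k\in\omega} P_k$, and for $X\in(I^{n+1})^+$ use countable completeness of $I$ to choose the least $l(X)\in\omega$ with $Z(X) := \{\eta:(X)_\eta\in Q^n_{l(X)}\}\in I^+$, and then the least $k(X)\in\omega$ with $Z(X)\in P_{k(X)}$. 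Setting $R^{n+1}_{l,k} := \{X:l(X)=l,\,k(X)=k\}$ gives a countable decomposition of $P([\kappa]^{n+1})/I^{n+1}$, so it suffices to show each $R^{n+1}_{l,k}$ contains no infinite antichain.

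Suppose toward contradiction that $\{X_j:j\in\omega\}\subset R^{n+1}_{l,k}$ is an infinite antichain; set $Z_j := Z(X_j)\in P_k\cap I^+$ and $S_\eta:=\{j:\eta\in Z_j\}$. The set $N:=\bigcup_{j_0<j_1}\{\eta:(X_{j_0})_\eta\cap(X_{j_1})_\eta\in(I^n)^+\}$ is a countable union of $I$-null sets (each summand is in $I$ because $X_{j_0}\cap X_{j_1}\in I^{n+1}$), hence $N\in I$; thus for every $\eta\in\kappa\setminus N$ the family $\{(X_j)_\eta : j\in S_\eta\}$ is a pairwise $I^n$-incompatible subset of $Q^n_l$. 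Finding a single $\eta\in\kappa\setminus N$ with $|S_\eta|=\omega$ therefore yields an infinite antichain in $Q^n_l$, contradicting the inductive hypothesis. A Ramsey reduction on $[\omega]^2$ (the monochromatic-incompatible case already being an infinite antichain inside $P_k$) lets us assume the $Z_j$'s are pairwise $I$-compatible, so it remains to show $T_\infty := \limsup_j Z_j\in I^+$.

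The crucial observation is that the sets $W_N := Z_N\setminus\bigcup_{j>N}Z_j$ ($N\in\omega$) are pairwise disjoint. Were $\{N : W_N\in I^+\}$ infinite, then pigeonhole on the countable decomposition $P(\kappa)/I=\bigcup_k P_k$ would place infinitely many of these $W_N$'s into a single piece $P_{k'}$, yielding an infinite antichain in $P_{k'}$ and contradicting the $\sigma$-finite-c.c.\ of $P(\kappa)/I$. Hence $W_N\in I$ for all $N\geq N_0$ for some $N_0$, so $\bigcup_{j\geq N}Z_j$ stabilizes modulo $I$ to a set $A\supseteq Z_{N_0}\in I^+$ for $N\geq N_0$; countable completeness of $I$ then gives $T_\infty\supseteq A\setminus E$ for some $E\in I$, whence $T_\infty\in I^+$ as required. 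This identification of $T_\infty$ via the disjoint sequence $(W_N)$ and a pigeonhole inside the $\sigma$-finite-c.c.\ decomposition is the main technical step; the rest is bookkeeping with cross-sections and the countable completeness of $I$.
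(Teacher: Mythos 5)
Your argument is correct, and it takes a genuinely different route from the paper's. The paper's proof goes by induction on $n$ through a generic ultrapower: force with $P(\kappa)/I$, push the cross-sections into $M=\mathrm{Ult}(V,G)$ via $\alpha\mapsto (j(X_\alpha))_\kappa$, apply the inductive hypothesis \emph{inside} $M$ (using elementarity to transfer $\sigma$-finite-c.c.\ of $P(j(\kappa))/j(I)$), and then reflect the resulting $\lambda$-linked family back to $V$ using that $P(\kappa)/I$ is $\lambda$-Knaster and c.c.c. You instead prove the purely combinatorial and strictly stronger statement that $P([\kappa]^n)/I^n$ is itself $\sigma$-finite-c.c.\ for every $n$. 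Your decomposition into the pieces $R^{n+1}_{l,k}$ (recording where the fibers concentrate in $\bigcup_l Q^n_l$ and where $Z(X)$ sits in $\bigcup_k P_k$) is the right one, and the key technical point --- that $T_\infty=\limsup_j Z_j\in I^+$ for a putative antichain --- is extracted cleanly: the pairwise disjoint $W_N=Z_N\setminus\bigcup_{j>N}Z_j$ can be $I$-positive only finitely often (else pigeonhole into some $P_{k'}$ yields an infinite antichain), so $\bigcup_{j\geq N}Z_j$ stabilizes mod $I$; together with $N\in I$ from $\sigma$-completeness, any $\eta\in T_\infty\setminus N$ gives an infinite antichain of fibers in $Q^n_l$, contradicting the inductive hypothesis. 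Your Ramsey pre-processing to make the $Z_j$ pairwise $I$-compatible is harmless but, as your own $W_N$ argument shows, actually redundant. Finally, you are right that Proposition~\ref{proposition: prop1d} as literally stated requires a \emph{normal} $J$, and that its proof only uses normality for the closing diagonal intersection; the recursion-of-length-$\lambda$ replacement for $J_\lambda=[\lambda]^{<\lambda}$ (each $C_\alpha\in J_\lambda^*$ contains a tail, and $\lambda$ is regular) is the standard fix, and is in fact also needed by the paper itself when it cites that proposition for the base case $n=1$. In short: your route avoids generic elementary embeddings and yields the sharper conclusion that $\sigma$-finite-c.c.\ lifts to the Fubini powers; the paper's route is heavier but uniform with the generic-ultrapower machinery used throughout the rest of the article.
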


\begin{proof}
Let us prove this by induction on $n\geq 1$ for all ideals $I$. Proposition \ref{proposition: prop1d} handles the case when $n=1$. More precisely, given $\langle X_\alpha\in P(\kappa)/I: \alpha\in A\rangle$ where $A\in [\lambda]^\lambda$, by re-indexing, we may assume $A=\lambda$. Then apply Proposition \ref{proposition: prop1d} to $\langle X_\alpha\in P(\kappa)/I: \alpha\in \lambda\rangle$ and the non-stationary ideal on $\lambda$. 

In general, Fix $n=k+1$ where $k\geq 1$. 
	Given $X\in [\lambda]^\lambda$ and $\langle X_\alpha\in (I^n)^+: \alpha\in X\rangle$, let $Y'_\alpha=\{\beta\in \kappa: (X_\alpha)_\beta\in (I^k)^+\}\in I^+$. Let $G\subset P(\kappa)/I$ be generic over $V$ such that $X'=\{\alpha\in X: Y'_\alpha\in G\}\in [\lambda]^\lambda$. This is possible by the fact that $P(\kappa)/I$ is $\lambda$-Knaster. Let $j: V\to M\simeq \mathrm{Ult}(V, G)$ be the generic elementary embedding. For each $\alpha\in X'$, let $Y^*_\alpha= (j(X_\alpha))_\kappa$. In particular, $Y^*_\alpha\in (P([j(\kappa)]^\kappa)/j(I^k))^M$. By the elementarity of $j$, $M\models P(j(\kappa))/j(I)$ satisfies $\sigma$-finite-c.c.  
As $P(\kappa)/I$ is c.c.c, $\lambda$ remains uncountable regular. Apply the induction hypothesis in $M$, we get $B\in [X']^\lambda$ such that for any $\alpha<\beta\in B$, $Y^*_\alpha\cap Y^*_\beta \in j((I^k)^+)$. Back to $V$. For each $\alpha\in X$, find $I$-positive $Y_\alpha\subset Y'_\alpha$ forcing that $\alpha\in \dot{B}$ if such set exists. Otherwise, $Y_\alpha$ is undefined. Note that $Z=\{\alpha\in X: Y_\alpha \text{ is defined}\}\in [X]^\lambda$. As $P(\kappa)/I$ is $\lambda$-Knaster, we can find $J_\lambda$-positive $Y\in [Z]^\lambda$ such that for any $\alpha<\beta\in Y$, $Y_\alpha\cap Y_\beta\in I^+$. We claim that $Y$ is as desired. To see this, for any $\alpha<\beta\in Y$, $Y_\alpha\cap Y_\beta$ forces that $\alpha,\beta\in\dot{B}$. As a result, $Y_\alpha\cap Y_\beta$ forces that $j(X_\alpha)_\kappa\cap j(X_\beta)_\kappa\in j((I^k)^+)$. Elementarity implies that $\{\eta\in Y_\alpha\cap Y_\beta: (X_\alpha)_\eta\cap (X_\beta)_\eta\in (I^k)^+\}\in J^*\restriction Y_\alpha\cap Y_\beta$. In particular, $X_\alpha\cap X_\beta \in (I^n)^+$. \end{proof}

It is a natural question whether we can prove the following stronger statement: ``if $P(\kappa)/I$ satisfies $\sigma$-finite-c.c, then $I$ is $J_\kappa^n$-Knaster for all $n\in \omega$"? While we do not know the answer to this question (see Section \ref{section: openquestions}), we do manage to isolate a strengthening of $\sigma$-finite-c.c-ness that admits such implications.

\begin{definition}\label{definition: sigmawedgefinitecc}
A poset $P$ is \emph{$\sigma$-w-finite-c.c} if there exists $P=\bigcup_{k\in \omega} P_k$ such that for any $k\in \omega$, 
\begin{itemize}
\item $P_k\subset P_{k+1}$, and
\item for any $\langle (p_i,q_i): i\in \omega\rangle\subset P_k\times P_k$ satisfying that $p_i$ is compatible with $q_i$ for each $i\in \omega$, there exist $l<j$ such that $p_l$ is compatible with $q_j$. 
\end{itemize}
\end{definition}

To explain the ``$w$" in our naming choice, notice that the conclusion insists that the configuration of ``$v$" appears in any given legitimate sequence. An easy application of Ramsey theorem gives that the configuration actually appears many times. ``$w$" is chosen to emphasize the repetition of such patterns. The following is an illustration. The red line between two conditions indicates their compatibility.

% https://q.uiver.app/#q=WzAsMTQsWzIsMCwiXFxidWxsZXQge3Ffe2krMn19Il0sWzEsMCwiXFxidWxsZXR7cV97aSsxfX0iXSxbMywwLCJcXGJ1bGxldHtxX3tpKzN9fSJdLFs0LDAsIlxcYnVsbGV0e3Ffe2krNH19Il0sWzUsMCwiXFxidWxsZXR7cV97aSs1fX0iXSxbMCwwLCJcXGJ1bGxldHtxX2l9Il0sWzAsMiwiXFxidWxsZXR7cF97aX19Il0sWzEsMiwiXFxidWxsZXR7cF97aSsxfX0iXSxbMiwyLCJcXGJ1bGxldHtwX3tpKzJ9fSJdLFszLDIsIlxcYnVsbGV0e3Bfe2krM319Il0sWzQsMiwiXFxidWxsZXR7cF97aSs0fX0iXSxbNSwyLCJcXGJ1bGxldHtxX3tpKzF9fSJdLFs2LDAsIlxcY2RvdHMiXSxbNiwyLCJcXGNkb3RzIl0sWzUsNiwiIiwwLHsiY29sb3VyIjpbMCw2MCw2MF0sInN0eWxlIjp7ImhlYWQiOnsibmFtZSI6Im5vbmUifX19XSxbNiwwLCIiLDAseyJjb2xvdXIiOlswLDYwLDYwXSwic3R5bGUiOnsiaGVhZCI6eyJuYW1lIjoibm9uZSJ9fX1dLFswLDgsIiIsMCx7ImNvbG91ciI6WzAsNjAsNjBdLCJzdHlsZSI6eyJoZWFkIjp7Im5hbWUiOiJub25lIn19fV0sWzgsMiwiIiwwLHsiY29sb3VyIjpbMCw2MCw2MF0sInN0eWxlIjp7ImhlYWQiOnsibmFtZSI6Im5vbmUifX19XV0=
\[\begin{tikzcd}
	{\bullet{q_i}} & {\bullet{q_{i+1}}} & {\bullet {q_{i+2}}} & {\bullet{q_{i+3}}} & {\bullet{q_{i+4}}}  & \cdots \\
	\\
	{\bullet{p_{i}}} & {\bullet{p_{i+1}}} & {\bullet{p_{i+2}}} & {\bullet{p_{i+3}}} & {\bullet{p_{i+4}}} &  \cdots
	\arrow[draw={rgb,255:red,214;green,92;blue,92}, no head, from=1-1, to=3-1]
	\arrow[draw={rgb,255:red,214;green,92;blue,92}, no head, from=1-3, to=3-3]
	\arrow[draw={rgb,255:red,214;green,92;blue,92}, no head, from=3-1, to=1-3]
	\arrow[draw={rgb,255:red,214;green,92;blue,92}, no head, from=3-3, to=1-4]
\end{tikzcd}\]

\begin{observation}
	 If $P$ satisfies $\sigma$-$w$-finite-c.c, then $P$ satisfies $\sigma$-finite-c.c: apply the property to $p_i=q_i$ as in the above.
\end{observation}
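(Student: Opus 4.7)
The plan is to use the very same decomposition $P = \bigcup_{k \in \omega} P_k$ that witnesses $\sigma$-$w$-finite-c.c. and argue that no $P_k$ can contain an infinite antichain, by a direct contrapositive. The hint embedded in the statement is precisely the choice $p_i = q_i$, so I would simply unpack this.

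Concretely, I would suppose for contradiction that for some fixed $k \in \omega$ there is an infinite antichain $\{r_i : i \in \omega\} \subseteq P_k$. Then I set $p_i = q_i = r_i$ for every $i \in \omega$, giving a sequence $\langle (p_i, q_i) : i \in \omega \rangle \subseteq P_k \times P_k$. Each pair $(p_i, q_i)$ is trivially compatible since $p_i = q_i$ is a common extension of itself. The hypothesis of $\sigma$-$w$-finite-c.c. therefore applies, yielding indices $i < j$ with $p_i \mathbin{||} q_j$, i.e.\ $r_i \mathbin{||} r_j$. This directly contradicts the assumption that $\{r_i : i \in \omega\}$ is an antichain, completing the proof.

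There is really no obstacle: the argument is essentially a one-line observation, and the only thing to verify is that the trivial choice $p_i = q_i$ satisfies the compatibility precondition of the $\sigma$-$w$-finite-c.c.\ definition, which it does because any element of $P$ is compatible with itself.
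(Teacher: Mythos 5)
Your proof is correct and is exactly the argument the paper intends: apply the $\sigma$-$w$-finite-c.c.\ property with $p_i=q_i=r_i$ for a putative infinite antichain $\{r_i:i\in\omega\}\subseteq P_k$, and the resulting $i<j$ with $r_i\mathbin{||}r_j$ give the contradiction. Nothing further is needed.
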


We give some non-trivial examples and non-examples of $\sigma$-$w$-finite-c.c posets. 

\begin{proposition}\label{prop: CohenExample}
For any $\lambda$, the Cohen forcing for adding $\lambda$ many subsets of $\omega$, $Add(\omega,\lambda)$, is $\sigma$-$w$-finite-c.c.
\end{proposition}

\begin{proof}
Fix $\lambda$. For each $r\in 2^{<\omega}$, define $C_r=\{p\in Add(\omega, \lambda): p\simeq r\}$. Here $p\simeq r$ means 
	\begin{itemize}
	\item $|support(p)|=dom(r)$,
	\item for any $i<dom(r)$, $p(support(p)(i))=r(i)$ where $support(p)(i)$ is the $i$-th element of $support(p)$ in the order of ordinals.
	\end{itemize}

Let $\langle r_n: n\in \omega\rangle$ be an enumeration of $2^{<\omega}$. Let $D_{m}=\bigcup_{i\leq m} C_{r_i}$. We show that $\bigcup_{m\in \omega} D_m = Add(\omega, \lambda)$ witnesses that $Add(\omega, \lambda)$ is $\sigma$-$w$-finite-c.c. 

To see this, let $\langle (p_i,q_i): i\in \omega\rangle \subset D_m\times D_m$ satisfying that $p_i || q_i$ for any $i\in \omega$ be given. Since the sizes of the domain of the relevant conditions are bounded above by $\max \{dom(r_j): j\leq m\}$, we can find an infinite $A\in [\omega]^{\aleph_0}$ such that 
	\begin{itemize}
	\item $\{support(p_i): i\in A\}$ forms a $\Delta$-system with root $r_p$ and there is $a: r_p\to 2$ such that each $p_i\restriction r_p = a$ for any $i\in A$, 
	\item  $\{support(q_i): i\in A\}$ forms a $\Delta$-system with root $r_q$ and there is $b: r_q\to 2$ such that each $q_i\restriction r_q = b$ for any $i\in A$. 
	\end{itemize}
Fix $i\in A$. Let us find some large enough $j\in A$ such that $support(q_j)\cap support(p_i)=r_q\cap support(p_i)$. The reason why this is possible is that for any $\alpha\in support(p_i)-r_q$, there is at most one $l\in A$ such that $\alpha\in support(q_l)$. Fix such $i<j\in A$. For any $\alpha\in support(p_i)\cap support(q_j)\subset r_q$, $q_j(\alpha)=b(\alpha)=q_i(\alpha)=p_i(\alpha)$, where the last equality is due to the fact that $p_i$ is compatible with $q_i$. We have then verified that $p_i || q_j$. This completes our proof.
\end{proof}

\begin{remark}
Proposition \ref{prop: CohenExample} and the forthcoming Theorem \ref{theorem: main} can be used to give another proof that if $P$ is a finite support iteration of $\sigma$-linked forcings of any length and $J$ is a uniform normal $\nu$-saturated ideal on $\kappa$ with $\nu<\kappa$, then $P$ is $J^n$-Knaster. To see this, fix $g: P\to Add(\omega,\lambda)$ as in Theorem \ref{theorem: sigma-m-linked} satisfying that whenever $g(p)$ and $g(q)$ are compatible in $Add(\omega,\lambda)$, $p$ and $q$ are compatible in $P$. That $P$ is $J^n$-Knaster follows from the fact that $Add(\omega,\lambda)$ is $J^n$-Knaster and such property is preserved by pulling back with $g$.
\end{remark}

The next proposition shows that the $\sigma$-$w$-finite chain condition is a proper strengthening of the $\sigma$-finite chain condition.
\begin{proposition}
Non-trivial measure algebras are not $\sigma$-$w$-finite-c.c.
\end{proposition}

\begin{proof}
Let $(\mathbb{M},\mu)$ be some non-trivial measure algebra. For concreteness, we can take $\mathbb{M}$ to be $2^\omega$ with the usual product topology and $\mu$ is the $(\frac{1}{2}, \frac{1}{2})$-probability measure.
Suppose for the sake of contradiction that $\mathbb{M}=\bigcup_{n\in \omega} M_n$ witnesses that $\mathbb{M}$ is $\sigma$-$w$-finite-c.c. We say $A\in \mathbb{M}$ is \emph{positive} if $\mu(A)>0$.

	\begin{claim}\label{claim: iterate}
For any positive $D\in \mathbb{M}$, there exist disjoint positive sets $A, C\subset D$ and $n\in \omega$ such that for any positive $C'\subset C$, there is positive $C''\subset C'$ such that $A\cup C''\in P_n$.
	\end{claim}
	\begin{proof}[Proof of the Claim]
	Otherwise, suppose some positive $D$ is a counter example. We will recursively define disjoint positive subsets $\{D^i_0: i\in \omega\}$ of $D$ as follows. At stage $0$, partition $D$ into disjoint positive $D^0_0, E^0_1$. By the hypothesis applied to $D^0_0, E^0_1$ and $0$, there is a positive $D^0_1\subset E^0_1$ such that for any positive $B'\subset D^0_1$, we have that $B'\cup D^0_0 \not \in M_0$. 
At stage $n+1$, suppose we have defined positive sets $\langle D^i_0, D^i_1: i\leq n\rangle$ satisfying for any $i\leq n$,
	\begin{enumerate}
	\item $D^i_0\cap D^i_1 =\emptyset$,
	\item $D^{j}_0, D^j_1\subset D^i_1$ for any $i<j\leq n$,
	\item for any positive $B'\subset D^i_1$, $B'\cup \bigcup_{l\leq n} D^{l}_0 \not \in M_i$.
	\end{enumerate}

Partition $D^n_1$ into positive $D^{n+1}_0, E^{n+1}_1$. By the hypothesis applied to $\bigcup_{l\leq n+1} D^{l}_0$, $E^{n+1}_1$ and $n+1$, there is some positive $D^{n+1}_1\subset E^{n+1}_1$ such that for any positive $B'\subset D^{n+1}_1$, it is the case that $B'\cup \bigcup_{l\leq n+1} D^{l}_0 \not \in M_{n+1}$. 

This process gives us a sequence of disjoint positive subsets of $D$, $\langle D^i_0: i\in \omega\rangle$ satisfying that for any $m\in \omega$, $\bigcup_{l\in \omega} D^l_0=(\bigcup_{l>m} D^l_0)\cup \bigcup_{l\leq m} D^l_0\not\in M_m$, which is a contradiction.
	\end{proof}
Apply Claim \ref{claim: iterate} twice to get disjoint positive sets $A, B, C\in \mathbb{M}$ and $m\in \omega$ satisfying: 
	\begin{enumerate}
	\item for any positive $C'\subset C$, there is a positive $C''\subset C$ such that $A\cup C''\in M_m$, 
	\item for any positive $C'\subset C$, there is a positive $C''\subset C$ such that $B\cup C''\in M_m$.
	\end{enumerate}
	
Partition $C$ into countably infinite pairwise disjoint positive sets $\langle C_n: n\in \omega\rangle$. For each $C_n$, by the hypothesis, we can find positive sets $C_n^0\subset C_n^1 \subset C_n$ such that $A\cup C^0_n \in M_m $ and $ B\cup C^1_n\in M_m$. 

Finally, consider $\langle (A\cup C^0_n, B\cup C^1_n): n\in \omega\rangle \subset M_m\times M_m$. Note that for each $n\in \omega$, $(A\cup C^0_n)\cap (B\cup C^1_n)\supset C^0_n$, which is positive. For any $i<j\in \omega$, we have that $(A\cup C^0_i)\cap (B\cup C^1_j)=\emptyset$. This is a contradiction as desired.
\end{proof}

For technical reasons, we need to extend the notion of compatibility to finite subsets of a poset.

\begin{definition}\label{definition: arity}
Let $P$ be a poset, $a,b\in [P]^{<\omega}$. Let 
\begin{enumerate}
\item $a || b$ denote that there are $r\in a$ and $t\in b$ such that $r || t$,
\item $a \perp b$ denote that $\neg (a || b)$.
\end{enumerate}
\end{definition}

%\begin{remark}
%If we order $[P]^{<\omega}$ in a way that $a \preceq b$ iff for any $p\in b$ there is $q\in a$ such that $q\leq_{P} p$, then $a||b$ iff $a,b$ are compatible under $\preceq$.
%\end{remark}

\begin{definition}\label{definition: Knaster}
Fix $n\in \omega, k\leq \omega$, a poset $P$ and an ideal $J$ on $\kappa$. We say 
\begin{enumerate}
\item $[P]^{<k}$ is $J^n$-$m$-linked if for any $X\in (J^n)^+$, and $\langle p_a\in [P]^{<k}: a\in X\rangle$, there exists $Y\subset X$ in $(J^n)^+$ such that for any $\{a_i: i<m\}\subset Y$ with $a_i\simeq a_j$ for any $i,j<m$, then $\{p_{a_i}: i<m\}$ has a lower bound in $P$; 
\item $[P]^{<k}$ is $J^n$-Knaster if it is $[P]^{<k}$ is $J^n$-$2$-linked.
\end{enumerate}
\end{definition}

\begin{remark}
$[P]^{<\omega}$ being $J^n$-$m$-linked is equivalent to $P$ being $J^n$-$m$-linked. The reason why we consider the shift to finite sets of conditions is the proof of Lemma \ref{lemma:delta}. We choose such exposition for a cleaner presentation.
\end{remark}

%\begin{definition}\label{definition: XORfinite}
%A poset $P$ is \emph{$\sigma$-XOR-finite-c.c} if there exists $P=\bigcup_{k\in \omega} P_k$ such that for each $k\in \omega$, for any $\langle (r_n, h_n): n\in \omega\rangle\subset P_k\times P_k$, 
%there exist $n<m\in \omega$ such that either
%	\begin{enumerate}
%	\item $r_n || h_n$ and $r_n || h_m$ or 
%	\item $r_n\perp h_n$ and $r_n \perp h_m$.
%	\end{enumerate}
%\end{definition}

%\begin{definition}\label{definition: XORbounded}
%A poset $P$ is \emph{$\sigma$-w-bounded-c.c} if there exists $P=\bigcup_{k\in \omega} P_k$ and $f: \omega\to \omega$ such that for each $k\in \omega$, for any $\langle (r_n, h_n): n < f(n)\rangle\subset [P_k]^2_{||}=_{\mathrm{def}}\{(r,h)\in P_k^2: r||h\}$, there exist $i<j<f(n)$ and $r\in P$ such that $r\leq r_i, h_i, r_j, h_j$.
%
%\end{definition}

%\begin{remark}
%Here is an explanation of the terminology choice: 
%\end{remark}

\begin{lemma}\label{lemma: finitesupup}
If $\bigcup_{k\in \omega} P_k = P$ witnesses that $P$ is $\sigma$-$w$-finite-c.c, then for any $k\in \omega$ and any $\langle (a_i, b_i): i\in \omega\rangle \subset [P_k]^{<\omega} \times [P_k]^{<\omega}$ with $a_i || b_i$ for all $i\in \omega$, there exist $l<j\in \omega$ such that $a_l || b_j$.
\end{lemma}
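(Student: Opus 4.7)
The plan is to reduce this finite-subset statement directly to the singleton version of $\sigma$-$w$-finite-c.c.\ given in Definition \ref{definition: sigmawedgefinitecc}. The key observation is that the extended compatibility relation for finite subsets from Definition \ref{definition: arity} is existentially defined: $a || b$ merely asserts the existence of a witnessing pair $(r,t) \in a \times b$ with $r || t$ in $P$. Once such witnesses are chosen for each index, the problem collapses entirely to the singleton case, where the hypothesis applies verbatim.

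Concretely, I would proceed as follows. Fix $k \in \omega$ and a sequence $\langle (a_i, b_i) : i \in \omega\rangle \subset [P_k]^{<\omega} \times [P_k]^{<\omega}$ with $a_i || b_i$ for every $i$. For each $i \in \omega$, invoke the definition of $a_i || b_i$ to select $r_i \in a_i$ and $t_i \in b_i$ such that $r_i$ and $t_i$ are compatible in $P$. Since $a_i, b_i \subseteq P_k$, we automatically have $r_i, t_i \in P_k$. The resulting sequence $\langle (r_i, t_i) : i \in \omega\rangle \subset P_k \times P_k$ satisfies exactly the hypothesis of Definition \ref{definition: sigmawedgefinitecc}, so by $\sigma$-$w$-finite-c.c.\ there exist $l < j$ in $\omega$ with $r_l || t_j$ in $P$. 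Because $r_l \in a_l$ and $t_j \in b_j$, this yields $a_l || b_j$ by the definition of compatibility for finite subsets.

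There is no genuine obstacle to overcome here: the lemma is essentially a formal unpacking of how the finite-subset compatibility relation interacts with the one-dimensional chain condition. In particular, one does not need to impose any uniformity on the sizes $|a_i|, |b_i|$, nor to invoke Ramsey-theoretic pigeonhole arguments to stabilize enumerations, since a single witness from each $a_i$ and each $b_i$ already suffices to feed into the singleton version of the hypothesis.
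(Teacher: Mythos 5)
Your proof is correct and follows essentially the same argument as the paper's: pick a witnessing pair $r_i \in a_i$, $t_i \in b_i$ with $r_i \,||\, t_i$ for each $i$, and apply the singleton $\sigma$-$w$-finite-c.c.\ hypothesis to $\langle (r_i, t_i) : i \in \omega\rangle \subset P_k \times P_k$. The paper phrases it as a proof by contradiction while you argue directly, but the content is identical.
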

\begin{proof}
Suppose $\langle (a_i, b_i): i\in \omega\rangle \subset [P_k]^{<\omega} \times [P_k]^{<\omega}$ witnesses otherwise for the sake of contradiction.
For each $i\in \omega$, pick $p_i\in a_i$ and $q_i\in b_i$ such that $p_i || q_i$. Then $\langle (p_i, q_i): i\in \omega\rangle \subset P_k \times P_k$ contradicts with the fact that $P=\bigcup_{k\in \omega} P_k$ is $\sigma$-$w$-finite-c.c.
\end{proof}

\begin{remark}
For $k<k'\leq \omega$ and $n\in \omega$, $[P]^{<k'}$ being $J^n$-Knaster implies that $[P]^{<k}$ is $J^n$-Knaster.
\end{remark}

The main result of this section is that:
\begin{theorem}\label{theorem: main}
Let $J$ be a uniform normal $\nu$-saturated ideal on $\kappa$ for some $\nu<\kappa$ and $P$ is a poset.
If $P$ is $\sigma$-w-finite-c.c, then for any $n\in \omega$, $[P]^{<\omega}$ is $J^n$-Knaster.
\end{theorem}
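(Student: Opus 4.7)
The plan is induction on $n$. For the base case $n=1$, we adapt Proposition \ref{proposition: prop1d} to finite subsets: given $X\in J^+$ and $\langle p_\alpha\in [P]^{<\omega}:\alpha\in X\rangle$, use $\sigma$-completeness of $J$ to arrange $p_\alpha\subset P_m$ for a fixed $m$, and call $\alpha$ good for $Z$ if some $C\in J^*$ makes $p_\alpha \| p_\beta$ (in the sense of Definition \ref{definition: arity}) hold for every $\beta\in Z\cap C$. The good/bad dichotomy from the proof of Proposition \ref{proposition: prop1d} still works: if the good $\alpha$'s failed to be $J$-positive, one would produce $\alpha_0<\alpha_1<\cdots$ with pairwise $p_{\alpha_i}\perp p_{\alpha_j}$, and feeding $\langle (p_{\alpha_i},p_{\alpha_i}):i\in\omega\rangle$ into Lemma \ref{lemma: finitesupup} (noting $p_{\alpha_i}\| p_{\alpha_i}$ trivially) would produce a contradictory compatible pair. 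A normality diagonalization of the witnessing $C_\alpha$'s then yields $Y$.

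For the inductive step, assume $[P]^{<\omega}$ is $J^n$-Knaster and take $X\in (J^{n+1})^+$ with $\langle p_a\rangle$, $p_a\subset P_m$. Mimicking Lemma \ref{lemma:lastoverlapDelta}, force with $P(\kappa)/J$, let $j:V\to M\simeq \mathrm{Ult}(V,G)$ be the generic ultrapower, and put $B=\{a\in X\restriction n:\kappa\in j((X)_a)\}$, which we may assume is $(\bar{J}^n)^+$-positive. In $V[G]$, $\bar{J}$ is still $\nu$-saturated, normal, and uniform (Fact \ref{fact: standard}), and $j(P)$ remains $\sigma$-w-finite-c.c (any countable witness to failure would lie in $M$ via ${}^\kappa M\subset M$). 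Applying the induction hypothesis in $V[G]$ to $\langle j(\vec{p})_{a^\frown \kappa}:a\in B\rangle$ yields $B^*\subset B$ in $(\bar{J}^n)^+$ with $j(\vec{p})_{a^\frown \kappa}\| j(\vec{p})_{b^\frown \kappa}$ in $j(P)$ for all $a\simeq b\in B^*$. Pulling back as in Section \ref{section: n+1Knaster}, we obtain $Y^*\in(J^n)^+$ and for each $a\in Y^*$ a $J^+$-positive $Z_a\subset (X)_a$ forcing $a\in\dot{B}^*$; for $a\simeq b\in Y^*$ with $Z_a\cap Z_b\in J^+$ we extract, after uniformizing the sizes of the $p_{a^\frown\gamma}$'s using countable completeness, indices into those finite sets and a $J^+$-positive $C_{a,b}\subset Z_a\cap Z_b$ on which specific elements of $p_{a^\frown \gamma}$ and $p_{b^\frown \gamma}$ are compatible in $P$.

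The final $Y\subset X$ in $(J^{n+1})^+$ is assembled via Lemma \ref{lemma: highernormality}: diagonalize the $C_{a,b}$'s to produce, for each $a\in Y^*$, a $J^*$-set $\tilde{Z}_a\subset Z_a$ so that $\gamma\in\tilde{Z}_a\cap \tilde{Z}_b$ forces $\gamma\in C_{a,b}$, which handles every $\simeq$-pair whose maximum coordinate is shared. The remaining $\simeq$-types, involving pairs with distinct maxima, split into the fully disjoint case, handled by a Proposition \ref{proposition: prop1d}-style good/bad dichotomy in the $J^{n+1}$-setting (again contradicted by Lemma \ref{lemma: finitesupup}), and the intermediate overlap types, handled by iterating the ultrapower-plus-induction reduction along the analogue of the type-$k$ analysis of Proposition \ref{proposition: typekDelta}. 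The main obstacle is precisely this type-by-type bookkeeping: the generic ultrapower directly addresses only overlaps containing the maximum coordinate, so capturing every overlap pattern requires combining several such reductions and carefully intersecting the resulting $J^*$-witnesses via higher normality.
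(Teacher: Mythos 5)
There is a genuine gap, and it is exactly at the point you flag as ``the main obstacle'': the intermediate overlap types. Your plan for a type-$k$ pair $(a,b)$ with $0<k<n+1$ is to ``iterate the ultrapower-plus-induction reduction,'' but the generic ultrapower argument only ever yields compatibility between two \emph{pushed-up} conditions $j(\vec p)_{c^\frown\kappa}$ and $j(\vec p)_{d^\frown\kappa}$ for $c\simeq d$ in the generic extension; for a type-$k$ pair the tail $b^1=b-a_n$ sits entirely above $\max a$ while $a$ itself stays fixed, so what you actually need is compatibility between a single ground-model condition $p_a$ and $J^k$-almost all conditions $p_{b^0{}^\frown b^1}$ extending the stem $b^0$. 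No amount of iterating the ultrapower produces that mixed statement. The paper's mechanism for this is an abstract weak $\Delta$-system lemma (Lemma \ref{lemma:delta}): for each stem one extracts a finite ``root'' $q$ (a union of finitely many of the $p_\gamma$'s) such that any $r$ compatible with $q$ is compatible with $J^*$-many $p_\beta$, and any $r$ incompatible with $q$ is incompatible with $J^*$-many. Organizing these roots into a tree $\langle q_a: a\in Y\restriction i\rangle$ (Lemma \ref{lemma: roots}) and applying the induction hypothesis to the roots themselves, one first gets $q_{a^0}\,\|\,q_{b^0}$ and then reflects compatibility down twice --- first testing $q_{b^0}$ against extensions of $a^0$, then testing $p_{a^0{}^\frown a^1}$ against extensions of $b^0$ --- to handle type $k$ with no further forcing.

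This omission matters for a second reason: Lemma \ref{lemma:delta} is the \emph{only} place the hypothesis $\sigma$-$w$-finite-c.c.\ (as opposed to plain $\sigma$-finite-c.c.) is used. Every application of Lemma \ref{lemma: finitesupup} in your sketch is of the diagonal form $\langle(p_i,p_i)\rangle$ or a good/bad dichotomy, which only needs $\sigma$-finite-c.c.; so as written your argument never invokes the rectangular strengthening, which is a strong sign the intermediate types are not actually being handled. The base case, the disjoint case, and the shared-last-coordinate case in your proposal do match the paper (Proposition \ref{proposition: JKnaster}, Lemma \ref{lemma:n+1disjoint}, Lemma \ref{lemma:lastoverlap}), but the proof is incomplete without the root-extraction argument.
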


The structure of the proof of Theorem \ref{theorem: main} follows the one of $J^{n+1}$-weak-$\Delta$-system lemma. For the sake of completeness, we spell out the details. We will point out the locations where we need extra care since the poset given is abstract.

\subsection{An abstract $\Delta$-system lemma}
The same proof as in Proposition \ref{proposition: prop1d} gives the following.
\begin{proposition}\label{proposition: JKnaster}
If $P$ is $\sigma$-finite-c.c and $J$ is a uniform normal ideal on a uncountable regular $\kappa$, then $[P]^{<\omega}$ is $J$-Knaster.
\end{proposition}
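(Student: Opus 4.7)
The plan is to mimic the proof of Proposition \ref{proposition: prop1d} almost verbatim, with the sole twist being that the final contradiction must extract an antichain out of a sequence of pairwise $\perp$-incompatible finite subsets (in the sense of Definition \ref{definition: arity}) rather than of single conditions. First, fix a decomposition $P = \bigcup_{n \in \omega} P_n$ witnessing $\sigma$-finite-c.c. Given $X \in J^+$ and $\langle p_\alpha \in [P]^{<\omega} : \alpha \in X\rangle$, I would use that $J$ is $\sigma$-complete (being uniform and normal on an uncountable regular cardinal) to shrink $X$ to a $J$-positive set on which $|p_\alpha|$ takes a constant value $k$ and on which every element of $p_\alpha$ lies in a single fixed piece $P_m$. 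We may assume $k \geq 1$, since otherwise no compatibility between $p_\alpha$'s is possible and the statement would be vacuous.

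Second, define ``$\alpha$ is good for $Z$'' exactly as in the proof of Proposition \ref{proposition: prop1d}, but with $q_\alpha || q_\beta$ replaced by $p_\alpha || p_\beta$ in the sense of Definition \ref{definition: arity}. The main claim then reads: there exists a $J$-positive $Y' \subset X$ such that every $\alpha \in Y'$ is good for $Y'$. If this failed, one could recursively construct $\langle \alpha_i, X_i : i \in \omega \rangle$ with $X_{-1} = X$, $X_i \in J^+$ decreasing, $\alpha_i \in X_{i-1}$, and $p_{\alpha_i} \perp p_\beta$ for every $\beta \in X_i$. In particular, $p_{\alpha_i} \perp p_{\alpha_j}$ for all $i < j$. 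Now comes the only new step: pick an arbitrary representative $q_i \in p_{\alpha_i}$ for each $i$; by the definition of $\perp$ on finite sets we have $q_i \perp q_j$ for all $i < j$, so $\{q_i : i \in \omega\} \subset P_m$ is an infinite antichain in $P$, contradicting the $\sigma$-finite-c.c.-ness of $P$ witnessed by $\bigcup_n P_n$.

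Having secured $Y'$, I would conclude as in Proposition \ref{proposition: prop1d}: for each $\alpha \in Y'$ pick $C_\alpha \in J^*$ witnessing goodness, and set $Y = Y' \cap \Delta_{\alpha \in Y'} C_\alpha \in J^+$ by normality. Then for $\alpha < \beta \in Y$, $\beta \in C_\alpha$ forces $p_\alpha || p_\beta$, which is exactly the Knaster condition from Definition \ref{definition: Knaster} in the $m=2$, $n=1$ case. The only nontrivial point relative to Proposition \ref{proposition: prop1d} is the representative-picking step, and this is immediate once one unfolds Definition \ref{definition: arity}; I do not expect any genuine obstacle beyond keeping this bookkeeping straight.
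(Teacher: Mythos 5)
Your plan reproduces what the paper intends: the paper dispatches Proposition \ref{proposition: JKnaster} by simply saying it follows by ``the same proof'' as Proposition \ref{proposition: prop1d}, and your adaptation (recursively building $\langle\alpha_i,X_i\rangle$, then extracting one representative per $p_{\alpha_i}$ to manufacture an infinite antichain, then finishing with a diagonal intersection) is exactly that adaptation. The representative-extraction step is correct because $p_{\alpha_i}\perp p_{\alpha_j}$ in the sense of Definition \ref{definition: arity} means \emph{every} $r\in p_{\alpha_i}$ is incompatible with \emph{every} $t\in p_{\alpha_j}$, so any choice of $q_i\in p_{\alpha_i}$ gives pairwise incompatible conditions.

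The one step that does not go through as literally written is the preliminary shrinking to a $J$-positive set on which $p_\alpha\subset P_m$ for a single fixed $m$. For an arbitrary $\sigma$-finite-c.c.\ decomposition $\{P_n\}$, a finite set $p_\alpha$ may meet several pieces with no single $P_n$ containing all of it, so ``the piece of $p_\alpha$'' need not be defined and $\sigma$-completeness has nothing to stabilize. Two standard repairs: (i) replace $P_n$ by $Q_n=\bigcup_{j\le n}P_j$; each $Q_n$ still has no infinite antichain (an infinite antichain in a finite union of antichain-free sets would, by pigeonhole, have an infinite subset inside one of the pieces), so $\{Q_n\}$ is an increasing $\sigma$-finite-c.c.\ decomposition, and now $\alpha\mapsto\min\{n:p_\alpha\subset Q_n\}$ is a well-defined map into $\omega$ to which $\sigma$-completeness applies; or (ii) fix an enumeration $p_\alpha=\{r^\alpha_0,\dots,r^\alpha_{k-1}\}$, use $\sigma$-completeness only to stabilize the piece containing $r^\alpha_0$, and always take $q_i=r^{\alpha_i}_0$ as the representative in the antichain extraction. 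With either repair, the rest of your argument goes through verbatim.
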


The following key lemma essentially says we can obtain abstractly certain consequences of the $\Delta$-system lemma. Let $P$ be a $\sigma$-$w$-finite-c.c and let $\bigcup_{k\in \omega} P_k$ be the witness. 

\begin{lemma}\label{lemma:delta}
For any $Y\subset X\in J^+$, $\langle p_{\beta}\in [P]^{<\omega}: \beta\in Y\rangle$, $m\in \omega$, there exist $J$-positive $Y'\subset Y$, $F\in [Y]^{<\omega}$ and $q=\bigcup_{\gamma\in F} p_\gamma$ such that for any $r\in [P_m]^{<\omega}$,
\begin{itemize}
\item if $r || q$ then $r || p_\beta$ for $J^*\restriction Y'$-many $\beta\in Y'$, 
\item if $r \perp q$ then $r \perp p_\beta$ for $J^*\restriction Y'$-many $\beta\in Y'$,
\item $q || p_\beta$ for all $\beta\in Y'$.
%\item if $T\cap Z\in I^+$, then there exist $I^*$ many $\gamma\in Y'$, for which there exist $I^*$ many $\gamma'\in Y'$, such that $T\cap Z\cap Y_\gamma\cap Y_{\gamma'}\in I^+$.
\end{itemize}

\end{lemma}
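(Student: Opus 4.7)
The plan is an iterative construction of $(F, Y')$, with the $\sigma$-$w$-finite-c.c hypothesis invoked only at the end (via Lemma \ref{lemma: finitesupup}) to bound the iteration length.

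Setup: replacing $P_k$ by $\bigcup_{l\le k}P_l$ we may assume the decomposition is increasing; by the $\kappa$-completeness of $J$ we pass to a $J^+$-subset of $Y$ on which every $p_\beta$ lies in a single piece $[P_{k^*}]^{<\omega}$, and set $K:=\max(m,k^*)$ so that both the $p_\beta$'s and every $r\in[P_m]^{<\omega}$ lie in $[P_K]^{<\omega}$. Since $P$ is $\sigma$-finite-c.c, Proposition \ref{proposition: JKnaster} gives that $[P]^{<\omega}$ is $J$-Knaster, so after one further shrinking of $Y$ we may also assume $p_\beta || p_{\beta'}$ for all distinct $\beta,\beta'\in Y$.

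Main construction: maintain $(F_n,Y_n,q_n)$ with $F_n\in[Y]^{<\omega}$, $Y_n\in J^+\restriction Y$, $q_n=\bigcup_{\gamma\in F_n}p_\gamma$, and the invariant $q_n || p_\beta$ for every $\beta\in Y_n$. Starting from $F_0=\emptyset$, $Y_0=Y$, at each stage check whether the conclusion already holds for $(F_n,Y_n)$; if so, stop. Otherwise pick a bad witness $r_n\in[P_m]^{<\omega}$ and take action depending on its type: if $r_n\perp q_n$ while $A_{r_n}:=\{\beta\in Y_n: r_n || p_\beta\}$ is $J$-positive (case B), pick $\gamma_n\in A_{r_n}$ and set $F_{n+1}:=F_n\cup\{\gamma_n\}$, $Y_{n+1}:=A_{r_n}\setminus\{\gamma_n\}$; if $r_n || q_n$ with $A_{r_n}$ still $J$-positive (case A1), simply set $Y_{n+1}:=A_{r_n}$, $F_{n+1}:=F_n$. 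The invariant $q_{n+1} || p_\beta$ for $\beta\in Y_{n+1}$ is preserved thanks to the Knaster pruning. The key termination claim is that case (B) can be invoked only finitely often: if it were applied at stages $n_0<n_1<\cdots$ with additions $\gamma_{n_i}$, then $p_{\gamma_{n_i}} || r_{n_i}$ by construction, while $r_{n_i}\perp q_{n_i}$ combined with $p_{\gamma_{n_l}}\subseteq q_{n_i}$ for $l<i$ forces $p_{\gamma_{n_l}}\perp r_{n_i}$ for all $l<i$; the pairs $(p_{\gamma_{n_i}},r_{n_i})\in [P_K]^{<\omega}\times[P_K]^{<\omega}$ then violate Lemma \ref{lemma: finitesupup} at level $K$. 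Once case (B) exhausts, $F=F^*$ and $q=q^*$ are frozen, and the remaining (A1)-shrinkings can be assembled into a single final $Y'\in J^+\restriction Y$ via the $\kappa$-completeness and normality of $J$ (under the mild assumption that $|[P_m]^{<\omega}|\le\kappa$, which can be arranged).

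The hard part will be ruling out the residual case (A2), in which $r_n || q_n$ but $A_{r_n}$ is already $J$-null in $Y_n$: then neither shrinking $Y$ nor monotonically enlarging $F$ can fix the dichotomy for $r_n$, and naive backtracking on $F_n$ to remove the $\gamma$'s witnessing $r_n || q_n$ could break previously settled $r$'s. My intended resolution is to sharpen the choice of $\gamma_n$ in each (B)-step: rather than picking it from all of $A_{r_n}\cap Y_n$, one restricts further to a $J^+$-refinement (produced using the $\nu$-saturation of $J$ applied to the $P_m$-compatibility patterns of the $p_\beta$'s) on which compatibility of $p_{\gamma_n}$ with any $r\in[P_m]^{<\omega}$ already certifies $A_r$ to be $J^*$-full. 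Under this stronger selection rule case (A2) never arises at termination, and what remains are only (A1)-type shrinkings that finish the proof.
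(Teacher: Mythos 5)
Your direct-construction approach diverges substantially from the paper's argument, and the divergence is precisely where the gap lies. The paper proves Lemma \ref{lemma:delta} by contradiction: assuming the conclusion fails gives the assertion $(*)$ that \emph{every} pair $(Y',q)$ admits a bad $r$, and the recursion then always has a bad $r$ available at each stage (the dichotomy $d_n \in \{yes, no\}$ corresponds to whether that $r$ is of $(*)^1$-type or $(*)^2$-type). Crucially, in the $(*)^1$-case the paper shrinks to the $\perp$-set $\{\eta : r\perp p_\eta\}$, which $(*)^1$ guarantees is $J$-positive. The paper never needs to ``fix'' a bad $r$; it only accumulates enough structure to violate Lemma \ref{lemma: finitesupup} twice. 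Your construction, by contrast, shrinks to the $\|$-set $A_{r_n}$ in case (A1) in an attempt to repair each bad $r$ one at a time, which requires $A_{r_n}$ to be $J$-positive --- and that is exactly what fails in (A2).

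Your case (A2) is a genuine obstruction to the direct approach, and your proposed fix does not close the gap. You suggest restricting the choice of $\gamma_n$ to a $J^+$-set on which the $[P_m]^{<\omega}$-compatibility pattern of $p_\beta$ is homogeneous, so that $r\|p_{\gamma_n}$ automatically makes $A_r$ full. But the compatibility pattern of $p_\beta$ against $[P_m]^{<\omega}$ partitions $Y$ into as many as $2^{|[P_m]^{<\omega}|}$ classes, and $P$ is an arbitrary $\sigma$-$w$-finite-c.c. poset whose pieces $P_m$ can be of any cardinality; the parenthetical ``under the mild assumption that $|[P_m]^{<\omega}|\le\kappa$, which can be arranged'' is not justified and is not in fact arrangeable. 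Even granting $|[P_m]^{<\omega}|\le\kappa$, the homogenization would need far more than the $\nu$-saturation of $J$ (for $\nu<\kappa$); the $\sigma$-$w$-finite-c.c.\ condition gives no additional leverage here because $[P_m]^{<\omega}$-compatibility patterns can be pairwise distinct on a $J^+$-set. Finally, even setting (A2) aside, your termination argument for the (A1)-phase is incomplete: after $F$ is frozen you need to show that the (possibly transfinite) sequence of shrinkings stabilizes to a $J$-positive $Y'$ on which \emph{every} $r$ is settled, and the only tool you invoke (normality plus the unjustified cardinality bound) is not enough on its own. I would recommend abandoning the direct construction and restructuring the argument as a proof by contradiction along the paper's lines: assume the dichotomy fails for all $(Y',q)$, and run a recursion in which the $(*)^1$-stages shrink to $\perp$-sets and the $(*)^2$-stages pick $q_{n+1}=p_{\min Z_{n+1}}$, then apply Lemma \ref{lemma: finitesupup} to each of the two resulting infinite subsequences to obtain the contradiction.
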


\begin{proof}
We may assume there is some $l\in \omega$ such that $p_\beta\in P_l$ for all $\beta\in Y$ and $m\geq l$. Notice that applying Proposition \ref{proposition: JKnaster} to shrink $Y$ if necessary, the third requirement is taken care of automatically. As a result, we focus on the first two requirements.
Suppose for the sake of contradiction that the conclusion does not hold. Given $Z\subset Y\in J^+$, let $Q_Z$ denote the set $\{p_\gamma: \gamma\in Z\}$.
Consider the following assertion
 $(*)$: for any $J$-positive $Y'\subset Y$ and any $q\in [Q_{Y}]^{<\omega}$, there exists $r\in [P_m]^{<\omega}$ such that
\begin{enumerate}
\item either
 $r || q$, and $\{\eta\in Y': r \perp p_\eta\}\in J^+$,
 \item or $r \perp q$ and $\{\eta\in Y': r || p_\eta\}\in J^+$. 
\end{enumerate} 

By the hypothesis, $(*)$ holds. For notational simplicity, for relevant $q$, $r$ and $Y'$, we say $(*)_{q,r, Y'}^i$ holds if $(i)$ as above holds for $q, r$ and $Y'$, for $i=1,2$. 

\begin{claim}\label{claim: hereditary}
For any $Y'\subset Y \in J^+$, if for any $q\in [Q_{Y'}]^{<\omega}$, there is no $r\in [P_m]^{<\omega}$ satisfying $(*)^1_{q,r, Y'}$, then for any $Y''\subset Y' \in J^+$ and any $q\in [Q_{Y'}]^{<\omega}$, there is no $r\in [P_m]^{<\omega}$ satisfying $(*)^1_{q,r, Y''}$.
\end{claim}

\begin{proof}[Proof of the Claim]
Otherwise, let $Y'', q\in [Q_{Y'}]^{<\omega}$ and $r\in [P_m]^{<\omega}$ exemplify the negation. Then we have $r || q$ and $\{\eta\in Y'': r \perp p_\eta\}\in J^+$, which implies $(*)^1_{q,r, Y'}$, which is contradiction.
\end{proof}

We recursively build $\langle Z_n\subset Y, r_n\in P_m, q_n, d_n\in \{yes,no\}: n\in \omega\rangle$ as follows: 
\begin{enumerate}
\item $Z_0=Y$. We ask if there is $q\in [Q_{Z_0}]^{<\omega}$ for which there is some $r\in [P_{m}]^{<\omega}$ such that $(*)^1_{q,r, Z_0}$ holds. If the answer is yes, then we define $q_0, r_0$ to be some $q,r$ satisfying the above and $d_0=yes$. Let $Z_1=\{\eta\in Z_0: r\perp p_\eta\}\in J^+$. If the answer is no, let $q_0=p_{\min Z_0}$ and let $r_0\in [P_m]^{<\omega}$ be such that $(*)^2_{q_0,r_0, Z_0}$ holds and $d_0=no$. Let $Z_1=\{\eta\in Z_0: r_0 || p_\eta\}\in J^+$.
\item At stage $n+1$, suppose we already define $\langle Z_j, r_j, p_{\gamma_j}, d_j: j\leq n\rangle$ as well as $Z_{n+1}$. 
First let us assume that there is $j\leq n$ such that $d_j=no$. By Claim \ref{claim: hereditary}, for all $j'\in [j, n]$, $d_{j'}=no$. Let $q_{n+1}=p_{\min Z_{n+1}}$.
By Claim \ref{claim: hereditary} and $(*)$, we know that if we let $q^*=\bigcup_{j\leq v \leq n+1} q_v$, then there is some $r_{n+1}\in [P_m]^{<\omega}$ such that $(*)^2_{q^*, r_{n+1},Z_{n+1}}$ holds. Let $d_{n+1}=no$ and $Z_{n+2}=\{\eta\in Z_{n+1}: r_{n+1} || p_{\eta} \}\in J^{+}$. 
Suppose for all $j\leq n$, $d_j=yes$. In this case, we basically repeat the construction at stage $0$. More precisley, we ask if there is $q\in [Q_{Z_{n+1}}]^{<\omega}$ for which there is some $r\in [P_{m}]^{<\omega}$ such that $(*)^1_{q,r, Z_{n+1}}$ holds. If the answer is yes, then we define $q_{n+1}, r_{n+1}$ to be some $q,r$ satisfying the above and $d_{n+1}=yes$. Let $Z_{n+2}=\{\eta\in Z_{n+1}: r\perp p_\eta\}\in J^+$. If the answer is no, let $q_{n+1}=p_{\min Z_{n+1}}$ and let $r_{n+1}\in [P_m]^{<\omega}$ be such that $(*)^2_{q_{n+1},r_{n+1}, Z_{n+1}}$ holds and $d_{n+1}=no$. Let $Z_{n+2}=\{\eta\in Z_{n+1}: r || p_\eta\}\in J^+$.
\end{enumerate}

\begin{claim}\label{claim: finiteyes}
There is some $j<\omega$ such that $d_j=no$.
\end{claim}
\begin{proof}
Otherwise, $d_j=yes$ for all $j\in \omega$. By the construction, we have a sequence $\langle (r_n, q_n)\in [P_m]^{<\omega}\times [P_m]^{<\omega}: n \in \omega\rangle$ satisfying that for any $n<w<\omega$, $r_n || q_n$, and $r_n \perp q_w$. This contradicts with Lemma \ref{lemma: finitesupup}.
\end{proof}
By Claim \ref{claim: hereditary} and \ref{claim: finiteyes}, there exists a least $j\in \omega$ such that for all $j'\geq j$, $d_{j'}=no$. By the construction, we have a sequence $\langle (q_{n+1}, r_n)\in [P_m]^{<\omega}\times [P_m]^{<\omega}: j\leq n \in \omega\rangle$ satisfying that for any $l\leq n<w<\omega$, $q_{n+1} \perp r_w$, and $q_{n+1} || r_n$. Again, this contradicts with Lemma \ref{lemma: finitesupup}.\end{proof}

\begin{remark}
Lemma \ref{lemma:delta} is the only place where we actually use the stronger $\sigma$-$w$-finite-c.c-ness. The rest of the proof goes through only assuming the poset is $\sigma$-finite-c.c.
\end{remark}

\subsection{$J^{n+1}$-Knasterness}

Fix $J$, a uniform normal $\nu$-saturated ideal on $\kappa$ for some $\nu<\kappa$. If $n=1$, then it is enough that $J$ is precipitous \cite{GalvinJechMagidor}. However, we shall not optimize with this respect.

%\begin{proposition}\label{proposition: sigmafiniteJKnaster}
%If $J$ is a uniform normal ideal on $\kappa$ satisfying $\sigma$-finite-c.c, then $J$ satisfies the technical assumption above.
%\end{proposition}
%
%\begin{proof}
%
%\end{proof}

We will prove the following theorem in this section, by induction on $n\geq 1$. We repeat Theorem \ref{theorem: n+1-D} for the convenience of the reader. Actually, we prove a slightly stronger version below (replacing $P$ with $[P]^{<\omega}$).

\begin{theorem*}
Suppose $J$ is a uniform normal $\nu$-saturated ideal on $\kappa$ for some $\nu<\kappa$ and $n\in \omega$.
If $P$ is $\sigma$-$w$-finite-c.c, then $[P]^{<\omega}$ is $J^{n+1}$-Knaster.
\end{theorem*}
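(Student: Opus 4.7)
The plan is to induct on $n \geq 0$. The base case $n = 0$ (so $J^1 = J$) is Proposition \ref{proposition: JKnaster}. For the inductive step, I will imitate the architecture of the proof of Theorem \ref{theorem: nDWeakDelta} in Section \ref{section: n+1Knaster}, with Lemma \ref{lemma:delta} playing the role that the $\Delta$-system root extraction of Lemma \ref{lemma: rootsDelta} played there. Rather than tracking finite supports $E_a$ and producing a root function $W$ with $E_a \cap E_b \subset W(a\cap b)$, I will track the conditions $p_a \in [P]^{<\omega}$ themselves and produce, for each initial segment $a$, a \emph{canonical condition} $q_a \in [P]^{<\omega}$ (a finite union of certain $p_b$ with $b \supseteq a$) whose compatibility profile with any $r \in [P_m]^{<\omega}$ controls, modulo $J^{n+1-|a|}$, the compatibility behavior of all $p_b$ extending $a$.

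Fix $X \in (J^{n+1})^+$ and $\langle p_a : a\in X\rangle \subset [P]^{<\omega}$. Using $\sigma$-completeness of $J^{n+1}$ together with a cumulative re-indexing of the decomposition $P = \bigcup_k P_k$, reduce to the case where $p_a \subseteq P_k$ for a single fixed $k$. Preliminary pruning then proceeds by iterated application of Lemma \ref{lemma:delta}, from the last coordinate backwards: for $i = n, n-1, \ldots, 0$ and each $a \in X\restriction i$, apply the lemma to the family built at the previous stage to extract $q_a$ and a pruned subfamily; interleave with Lemma \ref{lemma: highernormality} to bookkeep as in Lemma \ref{lemma: rootsDelta}. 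The outcome is a tree-indexed collection $\{q_a\}$ on a $J^{n+1}$-positive $X' \subseteq X$ such that $q_{a'} \parallel p_b$ for every $a' \sqsubseteq b \in X'$, with the key \emph{compatibility-transfer property}: for $r \in [P_k]^{<\omega}$, $r \parallel q_a$ forces $r \parallel p_b$ for $J^{n+1-|a|}$-almost all $b \supseteq a$.

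The non-overlapping and intermediate-type cases (types $k \in (0, n+1)$ of Definition \ref{definition: typek}) are then handled as in Subsections 3.2 and 3.4. For fully non-overlapping pairs $a < b$ in $X'$, normality of $J^{n+1}$ lets us prune so that the compatibility-transfer property upgrades ``$p_a \parallel q_\emptyset$'' into ``$p_a \parallel p_b$''. For intermediate type $k$, feed $\langle q_a : a \in X'\restriction n\rangle$ into the inductive hypothesis applied in $V$ to find $Z \subseteq X'\restriction n$ in $(J^n)^+$ on which $q_a \parallel q_b$ for $a \simeq b$; a further normality-driven pruning (analogous to Proposition \ref{proposition: typekDelta}, using the hereditary property of the canonical conditions) then promotes this into compatibility of $p_a$ with $p_b$ in the full $(n+1)$-tuples.

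The crux, parallel to Lemma \ref{lemma:lastoverlapDelta}, is the overlap-at-last-coordinate case. Force with $P(\kappa)/J$, let $j: V \to M$ be the generic ultrapower, and work in $V[G]$ with $B = \{a \in X'\restriction n : \kappa \in j((X')_a)\} \in (\bar J^n)^+$ and the family $\langle j(\vec{p})_{a\frown \kappa} : a \in B\rangle \subseteq [j(P)]^{<\omega}$. By Fact \ref{fact: standard} $\bar J$ is $\nu$-saturated in $V[G]$, and since $V[G] \models {}^\kappa M \subseteq M$ any potential $\omega$-sequence witnessing failure of $\sigma$-$w$-finite-c.c.\ for $j(P)$ in $V[G]$ would live in $M$, contradicting elementarity; hence $j(P)$ remains $\sigma$-$w$-finite-c.c.\ in $V[G]$, and the inductive hypothesis there yields $B^* \subseteq B$ in $(\bar J^n)^+$ on which the family is pairwise compatible. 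Reflection to $V$ proceeds via the name-and-density argument of Lemma \ref{lemma:lastoverlapDelta}: for each $a \in X'\restriction n$ select a $J$-positive $Z_a \subseteq (X')_a$ forcing $a \in \dot B^*$ whenever possible, and use the standard fact that $A \in G$ iff $\kappa \in j(A)$ to translate ``$j(\vec{p})_{a\frown \kappa} \parallel j(\vec{p})_{b\frown \kappa}$ in $j(P)$'' back into ``$\{\eta : p_{a\frown \eta} \parallel p_{b\frown\eta}\text{ in }P\} \in J^*$'' for the relevant $(a,b)$. The main obstacle will be orchestrating this reflection uniformly across a $J^n$-positive set of $(a,b)$ pairs while preserving the canonical-condition structure set up during the preliminary pruning; this is carried out by an appeal to Lemma \ref{lemma: highernormality} on the map $a \mapsto Z_a$, exactly as in the proof of Lemma \ref{lemma:lastoverlapDelta}.
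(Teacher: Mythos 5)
Your proposal follows the paper's proof of Theorem~\ref{theorem: main} almost step for step: the same induction on $n$ with base case Proposition~\ref{proposition: JKnaster}, the same preliminary pruning via iterated applications of Lemma~\ref{lemma:delta} to produce the tree of conditions $q_a$ with the compatibility-transfer property (Lemma~\ref{lemma: roots}), the same type-by-type case analysis, the same generic-ultrapower reflection for the overlap-at-last-coordinate case (Lemma~\ref{lemma:lastoverlap}) including the $\sigma$-$w$-finite-c.c.\ absoluteness observation and the name-and-density descent, and the same use of the induction hypothesis at dimension $n$ for the intermediate types (Proposition~\ref{proposition: typek}).

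The one place you genuinely deviate is the fully non-overlapping case $a<b$. You propose to ``upgrade $p_a \parallel q_\emptyset$ into $p_a \parallel p_b$'' by the transfer property plus normality of $J^{n+1}$, mirroring the $\Delta$-system Subsection~3.2 (Lemma~\ref{lemma:n+1increasingdisjoint}). But the poset analogue in the paper, Lemma~\ref{lemma:n+1disjoint}, does \emph{not} go through the transfer property; it reruns a fresh good/bad dichotomy exactly as in Proposition~\ref{proposition: JKnaster} and then diagonalizes. The reason is that your starting point $p_a \parallel q_\emptyset$ is not actually provided by Lemma~\ref{lemma: roots}: its third bullet only gives compatibility between \emph{adjacent} levels, $q_{a\restriction i} \parallel q_{a\restriction i+1}$, and compatibility in an arbitrary poset is not transitive, so $q_\emptyset \parallel p_a$ does not follow. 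Your route is repairable — since $q_\emptyset\in[P_l]^{<\omega}$ one can iterate the transfer property with $r=q_\emptyset$ starting from $q_\emptyset\parallel q_{\langle\gamma\rangle}$ to shrink to a $J^{n+1}$-positive set where $q_\emptyset\parallel p_a$ holds, and only then apply the transfer property with $r=p_a$ and diagonalize — but this extra pruning step is missing from your proposal, and the paper chooses instead to sidestep it by the independent good/bad argument. Everything else in your outline is sound.
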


Fix $\bigcup_{n\in \omega} P_n = P$ witnessing the $\sigma$-$w$-finite-c.c-ness of $P$. Let $X\in (J^{n+1})^+$ and $\bar{p}=\langle p_a\in [P]^{<\omega}: a\in X\rangle$ be given. By shrinking $X$ if necessary and the $\sigma$-completeness of $J^{n+1}$, we may assume there is $k,l\in \omega$ such that for any $a\in X$, $p_a\in [P]^k$ and $p_a\subset P_l$. 
 
\subsubsection{Preliminary pruning}

For $i\leq n+1$, define $X\restriction i = \{a\in [\kappa]^i: \exists c\in [\kappa]^{n+1-i} (a,c)\in X\}$. It is clear that for all $i\leq n+1$, $X\restriction i \in (J^i)^+$. For each $a\in X\restriction i$ and $j> i$, let $(X\restriction j)_a = \{c\in [\kappa]^{j-i}: (a,c)\in X\restriction j\}$.

\begin{lemma}\label{lemma: roots}
There exists a $J^{n+1}$-positive $Y\subset X$ and $\langle q_{a}\in [P_l]^{<\omega}: a\in Y\restriction i, i\leq n+1\rangle$ such that 
\begin{enumerate}
	\item if $a\in Y$, then $q_a = p_a$,
	\item for any $i\leq n$, $a\in Y\restriction i$, and $r\in [P_l]^{<\omega}$, if 
		\begin{itemize}
		\item $r || q_a$, then for $J^*\restriction (Y\restriction i+1)_a$-many $\gamma$, $r || q_{a^\frown \gamma}$,
		\item $r \perp q_a$, then for $J^*\restriction (Y\restriction i+1)_a$-many $\gamma$, $r \perp q_{a^\frown \gamma}$,
		\item for any $\gamma\in (Y\restriction i+1)_a$, $q_a || q_{a^\frown \gamma}$.
		\end{itemize}
\end{enumerate}
\end{lemma}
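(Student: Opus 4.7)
The plan is to construct the family $\{q_a\}$ together with auxiliary ``slice shrinkings'' $W^i_a$ by downward induction on $i$ from $n+1$ to $0$, and then to extract $Y$ as a Fubini-style diagonal selection. At the top level $i=n+1$, I will set $q_a = p_a$ for $a \in X$. For $i \leq n$, once $q_b \in [P_l]^{<\omega}$ has been defined for all $b \in X \restriction i+1$, I fix $a \in X \restriction i$ and note that the one-dimensional slice $(X \restriction i+1)_a$ is in $J^+$ (by the standing density convention for $X \in (J^{n+1})^+$). Applying Lemma \ref{lemma:delta} to this slice, the family $\langle q_{a^\frown \gamma} : \gamma \in (X \restriction i+1)_a\rangle$, and the parameter $m = l$, I obtain a finite $F^i_a$, a root $q_a := \bigcup_{\gamma \in F^i_a} q_{a^\frown \gamma} \in [P_l]^{<\omega}$, and a $J$-positive $W^i_a \subset (X \restriction i+1)_a$ witnessing the three bullets of Lemma \ref{lemma:delta}.

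At the end of the induction I will define $Y = \{b \in X : b_i \in W^i_{b \restriction i} \text{ for every } i \leq n\}$. Clause (1) of the lemma is then immediate since $Y \subset X$ and $q_a = p_a$ on $X$. For clause (2), given $a \in Y \restriction i$ with $i \leq n$ and $r \in [P_l]^{<\omega}$, I will observe $(Y \restriction i+1)_a \subset W^i_a$; the $J$-null exceptional set for $r$ inside $W^i_a$ provided by Lemma \ref{lemma:delta} intersected with $(Y \restriction i+1)_a$ is again $J$-null, which transfers the required largeness of compatible (resp.\ incompatible) $\gamma$'s, while the clause ``$q_a || q_{a^\frown \gamma}$ for every $\gamma$'' restricts directly.

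The main obstacle is verifying $Y \in (J^{n+1})^+$. The plan is to prove, by induction on $j$ (simultaneously for every choice of the ``$W^k_\cdot$''-family satisfying the positivity condition), that
\[
V_j := \{b \in [\kappa]^j : b_i \in W^i_{b \restriction i} \text{ for all } i < j\}
\]
lies in $(J^j)^+$. Since $W^i_a \subset (X \restriction i+1)_a$ at every step, $V_{n+1} \subset X$, and in fact $Y = V_{n+1}$. The base $V_0 = \{\emptyset\}$ is immediate. For the inductive step, fix $\alpha \in W^0_\emptyset$; the slice $(V_{j+1})_\alpha$ has the same shape as a $V_j$ built from the shifted collection $\widetilde W^k_b := W^{k+1}_{\langle \alpha\rangle^\frown b}$ (each in $J^+$, and defined since $\langle \alpha\rangle^\frown b \in X \restriction k+1$ unwinds from the inclusions $W^{k'}_c \subset (X \restriction k'+1)_c$). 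By the induction hypothesis applied to this shifted family, $(V_{j+1})_\alpha \in (J^j)^+$, so $(V_{j+1})_0 \supset W^0_\emptyset \in J^+$, whence $V_{j+1} \in (J^{j+1})^+$ by definition. The delicate points are all bookkeeping: tracking that the shifted $\widetilde W$'s remain well-defined and positive, and that the largeness clauses of Lemma \ref{lemma:delta} descend from $W^i_a$ to $(Y \restriction i+1)_a$.
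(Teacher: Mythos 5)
Your proposal is correct and is essentially the paper's own argument: the paper phrases it as an induction on $n$ (apply Lemma \ref{lemma:delta} to the top-level slices $(X)_a$ to get $q_a$ and $Z_a$, then invoke the induction hypothesis on $X\restriction n$ with the labels $q_a$), which unrolls into exactly your downward level-by-level recursion, with the final $Y=\{(a,\gamma):a\in Y^*,\gamma\in Z_a\}$ being your Fubini-style diagonal set. Your explicit positivity check for $Y$ and the observation that the $J^*\restriction W^i_a$-largeness clauses restrict to the positive subsets $(Y\restriction i+1)_a$ are just the details the paper leaves as ``not hard to see.''
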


\begin{proof}
We induct on $n\geq 0$. When $n=0$, Lemma \ref{lemma:delta} gives the desired conclusion. Suppose $n>0$. For each $a\in X\restriction n$, apply Lemma \ref{lemma:delta} to get $q_a\in [P_l]^{<\omega}$ and $J$-positive $Z_a\subset (X)_a$ such that 
	\begin{itemize}
	\item if $r\in [P_l]^{<\omega}$ and $r || q_a$, then for $J^*\restriction Z_a$-many $\gamma\in Z_a$, $r|| q_{a^\frown \gamma}$,
	\item if $r\in [P_l]^{<\omega}$ and $r \perp q_a$, then for $J^*\restriction Z_a$-many $\gamma\in Z_a$, $r\perp q_{a^\frown \gamma}$, and 
		\item $q_a || q_{a^\frown \gamma}$ for all $\gamma\in Z_a$.
	\end{itemize}
Apply the induction hypothesis to $X\restriction n$ and $\langle q_a: a\in X\restriction n\rangle$. We get a $J^n$-positive $Y^*\subset X\restriction n$ and $\langle q_a: a\in Y^*\restriction i, i\leq n\rangle$ satisfying the conclusion of the lemma. Finally, it is not hard to see that $Y=\{(a,\gamma): a\in Y^*, \gamma\in Z_a\}$ and $\langle q_a: a\in Y^*\restriction i, i\leq n\rangle \cup \langle q_a=_{\mathrm{def}}p_a: a\in Y\rangle$ are as desired.
\end{proof}

\subsubsection{Non-overlapping increasing types}
\begin{lemma}\label{lemma:n+1disjoint}
There exists a $J^{n+1}$-positive $Y\subset X$ such that for any $a<b\in Y$, $p_a || p_b$.
\end{lemma}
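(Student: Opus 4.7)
My plan is to exploit the upward propagation of compatibility in the first clause of Lemma~\ref{lemma: roots} (if $r || q_a$ then $r || q_{a^\frown \gamma}$ for $J^*$-many $\gamma$) together with the higher-dimensional normality recorded in Lemma~\ref{lemma: highernormality}. I will assume Lemma~\ref{lemma: roots} has already been applied, so in particular $q_a = p_a$ for $a\in Y$ and all the $q_c$ lie in $[P_l]^{<\omega}$.

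First I would show there is $X_0\subset X$ in $(J^{n+1})^+$ with $q_\emptyset || q_a$ for every $a\in X_0$. Since $q_\emptyset || q_\emptyset$ trivially, the first clause of Lemma~\ref{lemma: roots} applied with $r=q_\emptyset$ supplies, for each $i\leq n$ and each $c\in X\restriction i$ with $q_\emptyset || q_c$, a set $\{\gamma: q_\emptyset || q_{c^\frown \gamma}\}\in J^*\restriction (X\restriction i+1)_c$. Iterating coordinate by coordinate through the recursive definition of $J^{n+1}$, one concludes that $\{a\in X: q_\emptyset || q_a\}\in (J^{n+1})^*\restriction X$, and one takes $X_0$ to be this set.

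Next, for each $a\in X_0$ (so $q_a || q_\emptyset$ by symmetry), I would run the same procedure with $r=q_a$: starting from the compatibility with $q_\emptyset$, iterating the first clause of Lemma~\ref{lemma: roots} coordinate by coordinate produces a set $D_a\in (J^{n+1})^*$ such that $q_a || q_b$ for every $b\in D_a$ with $b>a$. Restricting at the first coordinate to $\gamma>\max a$ is harmless because $J$ is uniform. For finite sets $a\in [\kappa]^{<\omega}$ of size other than $n+1$, set $D_a=[\kappa]^{n+1}$.

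Finally, apply Lemma~\ref{lemma: highernormality} to the family $\langle D_a: a\in [\kappa]^{<\omega}\rangle$ to obtain $\Delta_a D_a\in (J^{n+1})^*$, and let $Y=X_0\cap \Delta_a D_a\in (J^{n+1})^+$. For any $a<b$ in $Y$, the finite set $a$ satisfies $a\subset \min b$, so $b\in D_a$; since also $b>a$, we conclude $q_a || q_b$, which says precisely $p_a || p_b$. The main delicate point is the bookkeeping needed to pass from the level-by-level $J^*$-largeness supplied by Lemma~\ref{lemma: roots} to $(J^{n+1})^*$-largeness, but this is precisely what the recursive definition of $J^{n+1}$ and Lemma~\ref{lemma: highernormality} are designed to handle.
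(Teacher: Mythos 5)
Your argument is correct, but it takes a genuinely different route from the paper's. The paper proves this lemma without using the preliminary pruning at all: it runs the same ``goodness'' extraction as in Proposition \ref{proposition: JKnaster} --- if there were no $J^{n+1}$-positive $Y_0\subset X$ on which every $a$ is good (i.e.\ $\{b: p_a\,||\,p_b\}$ relatively $(J^{n+1})^*$-large), one could recursively extract $a_0,a_1,\dots$ with $p_{a_i}$ pairwise incompatible, contradicting the $\sigma$-finite chain condition --- and then finishes, exactly as you do, by diagonally intersecting the sets $A_a=\{b\in Y_0: p_a\,||\,p_b\}$ via Lemma \ref{lemma: highernormality}. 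You instead obtain the relative largeness of $\{b: p_a\,||\,p_b\}$ by propagating compatibility up the tree of roots $q_c$ supplied by Lemma \ref{lemma: roots}, starting from $q_a\,||\,q_\emptyset$; this is sound, and it is essentially the same upward propagation the paper itself performs later in the claim inside Proposition \ref{proposition: typek}. The level-by-level bookkeeping you flag does go through: by induction on $i$, peeling off the first coordinate in the recursive definition of $J^{i+1}$, a set of the form $\{c^\frown\gamma: c\in S,\ \gamma\in T_c\}$ with $S$ relatively $(J^i)^*$-large and each $T_c$ relatively $J^*$-large is relatively $(J^{i+1})^*$-large. The trade-off is that your proof presupposes Lemma \ref{lemma: roots}, hence Lemma \ref{lemma:delta} and the stronger $\sigma$-$w$-finite chain condition, whereas the paper's proof of this particular step needs only the plain $\sigma$-finite chain condition and is independent of the pruning; on the other hand, your version makes the treatment of the increasing type uniform with that of the remaining types. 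Two small points to tidy: the base case $q_\emptyset\,||\,q_\emptyset$ requires $q_\emptyset\neq\emptyset$ (guaranteed by the last bullet of Lemma \ref{lemma:delta} in the non-degenerate case, or you could instead start the propagation from $q_\emptyset\,||\,q_{\{\gamma\}}$), and $D_a$ should also be declared (say, equal to $[\kappa]^{n+1}$) for $(n+1)$-tuples $a\notin X_0$ so that the family fed into Lemma \ref{lemma: highernormality} is fully defined.
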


\begin{proof}
Given $a\in X$ and $(J^{n+1})^+$-positive $Z\subset X$, we say $a$ is \emph{good} for $Z$ if $\{b\in Z: p_a || p_b\} \in (J^{n+1})^*\restriction Z$.

\begin{claim}\label{claim: start}
There exists $Y_0\subset X$ in $(J^{n+1})^+$ such that for any $a\in Y_0$, $a$ is good for $Y_0$.
\end{claim}
\begin{proof}[Proof of the Claim]
Suppose otherwise for the sake of contradiction. Using the assumption, recursively build $\langle X_n: n\in \omega\rangle$ and $\langle a_n: n\in \omega\rangle$ such that 
	\begin{enumerate}
	\item $X_0=X$, $X_{n+1}\subset X_n$ and $X_n\in J^+$,
	\item $a_n\in X_n$ is not good for $X_n$ and
	\item for any $b\in X_{n+1}$, $p_{a_n}\perp p_b$, in particular, for any $n<m\in \omega$, $p_{a_n}\perp p_{a_m}$.
	\end{enumerate}
	The construction is exactly the same as the one in Proposition \ref{proposition: JKnaster} and we can arrive at the contradiction the same way.
\end{proof}

Fix $Y_0$ as in Claim \ref{claim: start}. For each $a\in Y_0$, let $A_a=\{b\in Y_0: p_a || p_b\}\in (J^{n+1})^*\restriction Y_0$. Applying Lemma \ref{lemma: highernormality}, it is easy to see $Y=_{\mathrm{def}} Y_0\cap \Delta_{a\in Y_0} A_a$ is as desired.
\end{proof}

\subsubsection{Overlapping at the last coordinate}

\begin{lemma}\label{lemma:lastoverlap}
There exists a $J^{n+1}$-positive $Y\subset X$ such that for any $a\simeq b\in Y$ with $a_n=b_n$, $p_a || p_b$.
\end{lemma}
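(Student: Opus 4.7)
The plan is to mirror the strategy of Lemma \ref{lemma:lastoverlapDelta}: reduce the $(n+1)$-dimensional ``overlap at the top coordinate'' problem to the inductive $n$-dimensional case by passing to a generic ultrapower and reflecting back. I may assume $n\geq 1$, as the base case reduces to Proposition \ref{proposition: JKnaster}.

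First I would force with $P(\kappa)/J$ to obtain a generic ultrapower $j\colon V\to M\simeq \mathrm{Ult}(V,G)$ with $\mathrm{crit}(j)=\kappa$ and $V[G]\models {}^{\kappa}M\subset M$. Exactly as in the proof of Lemma \ref{lemma:lastoverlapDelta}, arrange that $B=\{a^0\in X\restriction n:\kappa\in j((X)_{a^0})\}\in(\bar J^n)^+$ in $V[G]$. In $V[G]$ the ideal $\bar J$ is uniform, normal, and $\nu$-saturated by Fact \ref{fact: standard}, and $j(P)$ is $\sigma$-$w$-finite-c.c: $M$ satisfies this by elementarity, and any putative witness to failure in $V[G]$ is a countable sequence of pairs in $j(P)\times j(P)$, which lies in $M$ by $\kappa$-closedness.

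Thus the induction hypothesis of Theorem \ref{theorem: main} applies in $V[G]$ at dimension $n$ to the family $\langle p^*_{a^0}:= j(\bar p)_{a^0{}^\frown\kappa}\in [j(P_l)]^k:a^0\in B\rangle$, producing $B^*\subset B$ in $(\bar J^n)^+$ such that whenever $a^0\simeq b^0\in B^*$, $p^*_{a^0}$ and $p^*_{b^0}$ are compatible in $[j(P)]^{<\omega}$. Back in $V$: for each $a^0\in X\restriction n$ let $Z_{a^0}\subset (X)_{a^0}$ be some $J$-positive condition forcing $a^0\in\dot B^*$ if one exists, and put $Y^*=\{a^0:Z_{a^0}\text{ defined}\}$. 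The same argument as in Lemma \ref{lemma:lastoverlapDelta} yields $Y^*\in (J^n)^+$: otherwise picking $a^0\in B^*\cap (Y^*)^c$ in $V[G]$ would give $(X)_{a^0}\in G$ while $a^0\notin B^*$, a contradiction. For each pair $a^0\simeq b^0\in Y^*$, fix $C_{a^0,b^0}\in J^*$ with either $C_{a^0,b^0}\cap Z_{a^0}\cap Z_{b^0}=\emptyset$, or, for every $\gamma$ in that intersection, $p_{a^0{}^\frown\gamma}\|p_{b^0{}^\frown\gamma}$; the second alternative is enforced by \L{}o\'s's theorem applied to the forcing statement $Z_{a^0}\cap Z_{b^0}\Vdash p^*_{a^0}\|p^*_{b^0}$. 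Finally diagonalize: for each $b^0\in Y^*$ set $Z'_{b^0}=\{\gamma\in Z_{b^0}:\forall a^0\subset \gamma\text{ in }Y^*,\ a^0\simeq b^0\Rightarrow \gamma\in C_{a^0,b^0}\}$, which equals $Z_{b^0}$ modulo $J$ by Lemma \ref{lemma: highernormality}, and let $Y=\{(b^0,\gamma):b^0\in Y^*,\gamma\in Z'_{b^0}\}\in (J^{n+1})^+$.

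The main technical subtlety is the reflection step: compatibility in a poset is an existential statement, so deriving $p_{a^0{}^\frown\gamma}\|p_{b^0{}^\frown\gamma}$ in the ground from $p^*_{a^0}\|p^*_{b^0}$ in $V[G]$ requires that a compatibility witness in $j(P)$ genuinely represents an ultrapower of ground-model compatibility witnesses. This is where $\kappa$-closedness is used twice: first to legitimize invoking the inductive hypothesis in $V[G]$ (by ensuring $j(P)$ remains $\sigma$-$w$-finite-c.c there), and second so that any witness $r\in j(P)$ lies in $M$ and is representable by a ground-model function $\gamma\mapsto r_\gamma$, which by \L{}o\'s's theorem produces a $J^+$-positive set of $\gamma$ for which $r_\gamma$ witnesses compatibility of $p_{a^0{}^\frown\gamma}$ and $p_{b^0{}^\frown\gamma}$ in $P$.
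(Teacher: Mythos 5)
Your proposal is correct and follows essentially the same route as the paper's proof: generic ultrapower, arranging $B=\{a\in X\restriction n:\kappa\in j((X)_a)\}$ to be $\bar J^n$-positive, verifying that $j(P)$ remains $\sigma$-$w$-finite-c.c.\ in $V[G]$, applying the dimension-$n$ induction hypothesis there, and reflecting back via the sets $Z_{a^0}$ and a normality diagonalization. Your closing discussion of the \L o\'s reflection of compatibility witnesses is a correct (and slightly more explicit) account of the step the paper summarizes as ``the reason is that $Z_a\cap Z_b\Vdash j(\bar p)_{a^\frown\kappa}\,||\,j(\bar p)_{b^\frown\kappa}$.''
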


\begin{proof}
Let $G\subset P(\kappa)/J$ be generic over $V$. Then in $V[G]$, fix a generic elementary embedding $j: V\to M$ with $\mathrm{crit}(j)=\kappa$ and $V[G]\models {}^\kappa M\subset M$. 
We can further assume $V[G]$ satisfies the following $B=\{a\in X\restriction n: \kappa\in j((X)_a)\} \in (\bar{J}^n)^+$. The reason is that if $\Vdash_{P(\kappa)/J} B\in \bar{J}^n$, then there exists $B_0\in J^n$ such that $\Vdash_{P(\kappa)/J} B\subset B_0$ by Fact \ref{fact: standard}. Let $a\in X\restriction n\cap (B_0)^c$, and force below $(X)_a$ over $P(\kappa)/J$ to get $V[G]$. Then in $V[G]$, $a\in B \cap (B_0)^c$, contradicting with the fact that $B\subset B_0$.

Consider $D=\{j(\bar{p})_{a^\frown \kappa}\in j(P): a\in B\}$. Since $M\models j(P)$ is $\sigma$-$w$-finite-c.c, $V[G]\models j(P)$ is $\sigma$-$w$-finite-c.c since well-founded-ness is absolute between transitive models (or alternatively use the fact that $V[G]\models {}^\omega M\subset M$).

By Fact \ref{fact: standard}, $\bar{J}$ is $\nu$-c.c in $V[G]$. Apply the induction hypothesis in $V[G]$ to $D$, $B$ and $\bar{J}$, we get $B^*\subset B$ which is $\bar{J}^n$-positive such that for any $c,d\in B^*$, if $c\simeq d$, then $j(\bar{p})_{c^\frown \kappa} || j(\bar{p})_{d^\frown \kappa}$.

Back in $V$. For each $a\in X\restriction n$, find $Z_a\subset (X)_a\in J^+$ forcing over $P(\kappa)/J$ that $a\in \dot{B}^*$ if such a set exists. 

	\begin{claim}
	$Y^*=\{a\in X\restriction n: Z_a\text{ exists}\}\in (J^n)^+$.
	\end{claim}
	\begin{proof}[Proof of the Claim]
	Suppose otherwise for the sake of contradiction. There exists $C\in (J^n)^*\restriction (X\restriction n)$ such that for any $a\in C$, $(X)_a\Vdash a\not\in \dot{B}^*$. In $V[G]$, since $B^*\subset X\restriction n$ is in $(\bar{J}^n)^+$, we know that $B^*\cap C\neq \emptyset$. Fix $d\in B^*\cap C$. In particular, since $d\in B$, $\kappa\in j((X)_d)$, which implies that $(X)_d\in G$. As $(X)_d\Vdash d\not\in \dot{B}^*$, we have $d\not\in B^*$. This is a contradiction.
	\end{proof}
	
	For any $a\simeq b\in Y^*$, if $Z_a\cap Z_b\in J^+$, then there is $C_{a,b}\in J^*$ such that for any $\gamma\in C_{a,b}\cap Z_a\cap Z_b$, $p_{a^\frown \gamma} || p_{b^\frown \gamma}$. The reason is that $Z_a\cap Z_b \Vdash j(\bar{p})_{a^\frown \kappa} || j(\bar{p})_{b^\frown \kappa}$. For any $a\simeq b\in Y^*$ with $Z_a\cap Z_b\in J$, let $C_{a,b}\in J^*$ disjoint from $Z_a\cap Z_b$.
	
	For each $b\in Y^*$, let $$Z_b'=\{\gamma\in Z_b: \forall a<\gamma, a\in Y^*, a\simeq b, \gamma\in C_{a,b}\}.$$
	
	Note that $Z_b'=_J Z_b$ since $J$ is normal. Finally, let $Y=\{(b,\gamma): b\in Y^*, \gamma\in Z_b'\}$. It is easy to see that $Y\subset X$ is in $(J^{n+1})^+$. To see that $Y$ is as desired, fix $a\simeq b\in Y$ with $a_n=b_n=\gamma$. By the definition of $Z_b'$ and $Z_a'$, $\gamma\in C_{a,b}\cap Z_a\cap Z_b$. In particular, $Z_a\cap Z_b\in J^+$. As a result of the choice of $C_{a,b}$ in this case, $p_{a^\frown \gamma} || p_{b^\frown \gamma}$.	\end{proof}

\subsubsection{Other types}

We classify the remaining types as follows. 

\begin{definition}\label{definition: typek}
We say $(a,b)\in [X]^2_{\simeq} =\{(c,d)\in X\times X: c_n\leq d_n, c\simeq d\}$ is of \emph{type $k$} for $0<k\leq n$ if $|b-a_n|=k$.
\end{definition}

\begin{remark}
The reason why we focus on $k\in (0,n+1)$ is that $(a,b)$ of type $0$ implies $a_n=b_n$ and $(a,b)$ of type $n+1$ implies $a<b$. Both of these types have been dealt with previously.
\end{remark}

\begin{proposition}\label{proposition: typek}
Fix $k\in (0,n+1)$. There exists a $J^{n+1}$-positive $Y\subset X$ such that for any $(a,b)\in [Y]^2_{\simeq}$ of type $k$, it is true that $p_a || p_b$.
\end{proposition}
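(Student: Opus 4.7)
My plan is to adapt the argument of Proposition~\ref{proposition: typekDelta} directly, substituting the intersection control $F_a\cap F_b\subset l_k(a\cap b)$ by the compatibility assertion $p_a || p_b$. First, apply Theorem~\ref{theorem: n+1-D} in dimension $n$ (the induction hypothesis) to the family $\langle q_c : c\in X\restriction n\rangle$ from Lemma~\ref{lemma: roots}, obtaining $Z\subset X\restriction n$ in $(J^n)^+$ with $q_c || q_d$ whenever $c\simeq d$ in $Z$. Then call $(a^0,b^0)$ potentially of type $k$ exactly as in Proposition~\ref{proposition: typekDelta}: $a^0\in Z$, $b^0\in Z\restriction (n-k)$, and some extensions yield a type-$k$ pair in $[X]^2_\simeq$. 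For such a pair $(a,b)$ with $k\geq 1$, one checks that $a_n \notin b$ and $b_n \notin a$, so $a\restriction n\simeq b\restriction n$; hence for $b^1\restriction k$ in a $(J^k)^*$-positive set making $b^0\frown(b^1\restriction k)\in Z$ a $\simeq$-partner of $a^0$, the induction hypothesis yields $q_{a^0} || q_{b^0\frown(b^1\restriction k)}$.

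The heart of the argument is an abstract analogue of Claim~\ref{claim: assym}: for each potentially type-$k$ pair $(a^0,b^0)$, produce $C^0_{a^0,b^0}\in J^*$ and $C^1_{a^0,b^0}\in (J^{k+1})^*$ such that for $a^1\in C^0_{a^0,b^0}$ and $b^1\in C^1_{a^0,b^0}$ with $a^1<b^1$, $p_{a^0\frown a^1} || p_{b^0\frown b^1}$. To construct these, I would apply Lemma~\ref{lemma: roots}(2), first bullet, iteratively. Starting from $q_{a^0} || q_{b^0\frown(b^1\restriction k)}$, apply the first bullet with $r=q_{a^0}\in [P_l]^{<\omega}$ to extend along the final coordinate of $b$; stitching the resulting $J^*$-extensions via Lemma~\ref{lemma: highernormality} yields a $(J^{k+1})^*$-positive $C^1_{a^0,b^0}$ of $b^1$'s with $q_{a^0} || q_{b^0\frown b^1}$. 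To then extend $a^0$ by $a^1$ uniformly in $b^1$, pigeonhole on the finite set $q_{a^0}$ yields a single element $r_0\in q_{a^0}$ which witnesses the compatibility with $q_{b^0\frown b^1}$ for a $(J^{k+1})^+$-positive subfamily; applying the first bullet once more with $r=\{r_0\}$ and $a=a^0$ gives a $J^*$-positive $C^0_{a^0,b^0}$ of $a^1$'s with $\{r_0\} || q_{a^0\frown a^1}$. The global $(J^{n+1})^+$-positive $Y\subset X$ is then assembled exactly as in the final paragraph of Proposition~\ref{proposition: typekDelta} via diagonal intersections.

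The main obstacle is that the shared-witness argument above produces $r_0\in q_{a^0}$ compatible with some element of $q_{a^0\frown a^1}$ and with some element of $q_{b^0\frown b^1}$, which does not in an abstract poset formally imply $q_{a^0\frown a^1} || q_{b^0\frown b^1}$, since compatibility is not transitive. In Proposition~\ref{proposition: typekDelta} the analogous conclusion is valid because set-theoretic intersection is associative. To resolve this in the compatibility setting, one should strengthen the preliminary pruning in Lemma~\ref{lemma: roots} to track a specific common extension $r_0$ of witnesses (possibly leaving $P_l$ and invoking the $r\in [P_m]^{<\omega}$ version of Lemma~\ref{lemma:delta} for a chosen $m\geq l$), or else directly extract from any hypothetical failure an infinite sequence violating the $\sigma$-$w$-finite-c.c. via the pattern in Lemma~\ref{lemma: finitesupup}.
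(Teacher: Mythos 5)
Your overall architecture matches the paper's (run the induction hypothesis on $\langle q_c: c\in X\restriction n\rangle$ to get $Z$, define potential type-$k$ pairs, prove an analogue of Claim~\ref{claim: assym}, assemble $Y$ by diagonal intersections), but the proof of the key claim has a genuine gap --- one you yourself flag without closing. The pigeonhole/shared-witness step produces an $r_0\in q_{a^0}$ compatible separately with $q_{a^0{}^\frown a^1}$ and with $q_{b^0{}^\frown b^1}$, and, as you note, compatibility is not transitive, so this does not yield $p_{a^0{}^\frown a^1}\,||\,p_{b^0{}^\frown b^1}$. Your suggested repairs (strengthening Lemma~\ref{lemma: roots} to track a common extension, or extracting a violation of $\sigma$-$w$-finite-c.c.) are not carried out and are not what is needed.

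The actual fix is to reorder the two propagation steps so that every application of Lemma~\ref{lemma: roots}(2) transports a \emph{genuine} compatibility between the two objects you ultimately care about, so transitivity is never invoked. First one shows $q_{a^0}\,||\,q_{b^0}$ itself (not merely compatibility with some extension $q_{b^0{}^\frown(b^1\restriction k)}$, which is what your first step gives and which is not usable later, since $q_{b^0}$ need not be contained in $q_{b^0{}^\frown c}$): if $q_{a^0}\perp q_{b^0}$, the $\perp$-propagation bullet of Lemma~\ref{lemma: roots} lets you extend $b^0$ inside $Z$ to some $b^0{}^\frown c\simeq a^0$ with $q_{a^0}\perp q_{b^0{}^\frown c}$, contradicting the choice of $Z$. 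Next, take $r=q_{b^0}$ and propagate down the $a$-side to get $C_{a^0}\in J^*$ with $q_{b^0}\,||\,p_{a^0{}^\frown a^1}$ for $a^1\in C_{a^0}\cap(X)_{a^0}$; crucially $C_{a^0}$ does not depend on $b^1$. Then, for each such $a^1$, take $r=p_{a^0{}^\frown a^1}$ (the full condition) and propagate down the $b$-side to get $D_{b^0,a^1}$ with $p_{a^0{}^\frown a^1}\,||\,p_{b^0{}^\frown b^1}$ for $b^1\in D_{b^0,a^1}\cap(X)_{b^0}$; finally set $C_{b^0}=\Delta_{a^1\in C_{a^0}}D_{b^0,a^1}$, which is legitimate because $a^1<\min b^1$. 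Your version extends the $b$-side first against the fixed root $q_{a^0}$, and then the set of admissible $a^1$ would have to depend on $b^1$ --- the dependency runs against the order $a^1<b^1$, which is exactly why you were forced into the pigeonhole detour and the transitivity problem.
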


\begin{proof}
By shrinking $X$ if necessary, we may without loss of generality assume that there are $\langle q_a: a\in X\restriction i, i\leq n+1\rangle$ satisfying the conclusion of Lemma \ref{lemma: roots}. Apply the induction hypothesis to $\langle q_a: a\in X\restriction n\rangle$ to get $J^n$-positive $Z\subset X\restriction n$ such that for any $c,d\in Z$ with $c\simeq d$, $q_c || q_d$. 
We say $(a^0, {b^0})$ is \emph{potentially of type $k$} if 
	\begin{itemize}
	\item $a^0\in Z$,
	\item ${b^0}\in Z\restriction n-k$,
	\item there are ${a^1}, {b^1}$ such that $({a^0}^\frown {a^1}, {b^0}^\frown {b^1})\in [X]^2_{\simeq}$ of type $k$.
	\end{itemize}

	\begin{claim}\label{claim: assym}
	For any $(a^0,{b^0})$ potentially of type $k$, there are $C_{a^0}=C_{a^0,{b^0}}^0\in J^*$ and $C_{{b^0}}=C^1_{a^0,{b^0}}\in (J^k)^*$ such that for any ${a^1}\in C_{a^0}\cap (X)_{a^0}$ and ${b^1}\in C_{{b^0}}\cap (X)_{{b^0}}$ with ${a^1}<{b^1}$, $p_{{a^0}^\frown {a^1}} || p_{{b^0}^\frown {b^1}}$.
	\end{claim}
	\begin{proof}[Proof of the claim]
%	The fact that $(a^0,{b^0})$ is potentially of type $k$ actually guarantees the existence of $J$-positive ${a^1}$ such that any ${a^1}\in {a^1}$ there is a $J^k$-positive $B_{{a^1}}$ such that any ${b^1}\in B_{{a^1}}$, $(a^0^\frown {a^1}, {b^0}^\frown {b^1})\in [X]^2_{\simeq}$ is of type $k$. 
	Let us first show that $q_{a^0} || q_{{b^0}}$. Suppose otherwise for the sake of contradiction. By the conclusion of Lemma \ref{lemma: roots} and the fact that $(Z)_{{b^0}}$ is a $J^{k-1}$-positive subset of $(X\restriction n)_{{b^0}}$, there exists $c \in (Z)_{{b^0}}$ such that $a^0\simeq {b^0}^\frown c$ and $q_{a^0}\perp q_{{b^0}^\frown c}$. This contradicts with the choice of $Z$. Therefore, $q_{a^0} || q_{{b^0}}$. 
	
	Applying Lemma \ref{lemma: roots} again with $q_{{b^0}}$ playing the role of $r$ as in Lemma \ref{lemma: roots}, we can find $C_{a^0}\in J^*$ such that $q_{{b^0}} || p_{{a^0}^\frown {a^1}}$ for all ${a^1}\in C_{a^0}\cap (X)_{a^0}$. For each ${a^1}\in C_{a^0}$, applying Lemma \ref{lemma: roots} once again with $p_{{a^0}^\frown {a^1}}$ playing the role of $r$, we can find $D_{{b^0},{a^1}}\in (J^k)^*$ such that for any ${b^1}\in D_{{b^0},{a^1}}\cap (X)_{{b^0}}$, $p_{{a^0}^\frown {a^1}} || p_{{b^0}^\frown {b^1}}$. Finally, by Lemma \ref{lemma: highernormality}, let $C_{{b^0}}=_{def} \Delta_{{a^1}\in C_{a^0}} D_{{b^0},{a^1}}=_{def}\{{b^1}: \forall {a^1}\in C_{a^0}\cap \min {b^1}, {b^1}\in D_{{b^0}, {a^1}}\} \in (J^k)^* $. It is easy to see that $C_{a^0}$ and $C_{{b^0}}$ are as desired.
	\end{proof}
	
For each $b\in Z\restriction n-k$, let $$W_b = \{{b^1}: \forall a^0\subset \min {b^1}, (a^0,b) \text{ is potentially of type $k$, }{b^1}\in C^1_{a^0, b} \}.$$
By Lemma \ref{lemma: highernormality}, $W_b\in (J^k)^*$. Let $$Y_0=\{(c,d): c\in Z\restriction n-k, d\in W_c\cap (Z)_c\}.$$
Then $Y_0$ is a $J^{n+1}$-positive subset of $X$.
For each $a\in Y_0\restriction n$, let
$$E_a=\{\gamma: \forall {b^0}\subset \gamma, (a,{b^0}) \text{ is potentially of type $k$, }\gamma\in C^0_{a,{b^0}}\}.$$
By Lemma \ref{lemma: highernormality}, $E_a\in J^*$. Let $Y=\{(a,\gamma)\in Y_0: \gamma\in E_a\cap (Y_0)_a\}$. It is immediate that $Y$ is a $J^{n+1}$-positive subset of $Y_0$.

Finally, let us verify that $Y$ is as desired. Suppose $(a,b)\in [Y]^2_{\simeq}$ of type $k$. Let $a^0=a\restriction n$ and ${b^0}=b\restriction n-k$. Then $(a^0, {b^0})$ is potentially of type $k$. Since ${b^0}<a_n$, $a_n\in C^{0}_{a^0, {b^0}}$ and since $a^0< b_{n-k+1}$, $b\restriction_{[n-k+1, n]}\in C^1_{a^0, {b^0}}$. Since $a_n < b\restriction_{[n-k+1, n]}$, by Claim \ref{claim: assym}, $p_{a} || p_{b}$.
\end{proof}
By going through all the finitely many types, we get the following: 
\begin{corollary}\label{cor: alltypes}
There is a $J^{n+1}$-positive $Y\subset X$ such that for any $k\in (0, n+1)$ and $(a,b)\in [Y]^2_{\simeq}$ of type $k$, it is true that $p_a || p_b$.
\end{corollary}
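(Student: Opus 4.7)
The plan is to iterate Proposition \ref{proposition: typek} once for each value of $k$ in the finite set $(0, n+1) \cap \omega = \{1, 2, \ldots, n\}$. Since the range of types is finite and each invocation of the proposition returns a $J^{n+1}$-positive subset of its input, a finite iteration will produce a single set $Y$ handling all types simultaneously.

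More concretely, I would construct a descending chain of $J^{n+1}$-positive sets $X = X_0 \supseteq X_1 \supseteq \cdots \supseteq X_n$ by recursion. At step $k$, apply Proposition \ref{proposition: typek} to the pair $(X_{k-1}, \bar{p}\restriction X_{k-1})$ to obtain $X_k \subseteq X_{k-1}$ in $(J^{n+1})^+$ such that every $(a,b) \in [X_k]^2_\simeq$ of type $k$ satisfies $p_a || p_b$. Setting $Y := X_n$ will give the corollary. Note that the proposition internally re-establishes, via an initial application of Lemma \ref{lemma: roots}, the auxiliary data $\langle q_a : a \in X_{k-1}\restriction i, i \leq n+1\rangle$ it needs, so there is no bookkeeping to propagate across the iteration beyond the $X_k$'s themselves.

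The only point to verify is that the compatibility properties accumulated from earlier steps survive the subsequent shrinkings. This is immediate from monotonicity: whenever $X_k \subseteq X_{k-1}$ one has $[X_k]^2_\simeq \subseteq [X_{k-1}]^2_\simeq$, so any statement of the form \emph{``every pair of type $j$ in $X_{k-1}$ has compatible conditions''} automatically passes to $X_k$. Consequently $X_n$ simultaneously witnesses the compatibility property for every $k \in (0, n+1)$.

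There is no real obstacle in this step; the corollary is an entirely routine finite composition of Proposition \ref{proposition: typek}. All the substantive work --- the generic ultrapower argument, the abstract $\Delta$-system lemma (Lemma \ref{lemma:delta}), and the type-$k$ reduction combining rootedness with induction on dimension --- has already been carried out in that proposition and in the lemmas feeding into it.
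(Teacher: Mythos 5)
Your proof is correct and matches the paper's (essentially unstated) argument: the paper simply notes that one goes through the finitely many types, i.e.\ iterates Proposition \ref{proposition: typek} once per $k\in\{1,\dots,n\}$, and the downward heredity of the conclusion under passing to $J^{n+1}$-positive subsets is exactly the monotonicity point you verify.
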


\begin{proof}[Proof of Theorem \ref{theorem: main}]
Apply successively Lemma \ref{lemma:n+1disjoint}, Lemma \ref{lemma:lastoverlap} and Corollary \ref{cor: alltypes}.
\end{proof}

\section{Concluding remarks}\label{section: openquestions}

We do not know if the ``$\sigma$-$w$-finite-chain condition" strengthening of ``$\sigma$-finite-chain condition" is necessary for getting our results.

\begin{question}
If $J$ is a uniform normal ideal on $\kappa$ such that $P(\kappa)/J$ satisfies the $\sigma$-finite-chain condition, is it true that $P(\kappa)/J$ is $J^n$-Knaster for all $n\in \omega$?
\end{question}

More generally, one can define the following ``linked" strengthening of the $\sigma$-finite-chain condition. 

\begin{definition}
A poset $P$ is said to satisfy \emph{$\sigma$-finite-$m$-chain condition} where $m\in \omega$, if there exists $\langle P_n: n\in \omega\rangle$ such that 
	\begin{enumerate}
	\item $P=\bigcup_{n\in \omega} P_n$, and 
	\item for each $n\in \omega$, for any infinite $\langle p_i\in P_n: i\in \omega\rangle$, there exists $H\in [\omega]^m$ such that $\{p_i: i\in H\}$ admits a lower bound in $P$.
	\end{enumerate}
\end{definition}

\begin{question}
If $J$ is a uniform normal ideal on $\kappa$ and $m\in \omega$ such that $P(\kappa)/J$ satisfies the $\sigma$-finite-$m$-chain condition, is it true that $P(\kappa)/J$ is $J^n$-$m$-linked for all $n\in \omega$?
\end{question}

%\begin{definition}
%A poset $P$ is \emph{projectively $\sigma$-$w$-finite-c.c} if there exists a $\sigma$-$w$-finite-c.c poset $Q$ and a map $i: P\to Q$ such that if $i(p) || i(q)$, then $p|| q$.
%\end{definition}
%
%\begin{remark} 
%{ } 
%\begin{enumerate}
%\item If $P$ has a dense subset $D$ that is projectively $\sigma$-$w$-finite-c.c, then $P$ is also projectively $\sigma$-$w$-finite-c.c. To see this, let $\sigma$-$w$-finite-c.c $Q$ and $i': D\to Q$ be the witnesses. Define $i$ on $P$ as follows: $i(p)=i'(p')$ where $p'\in D$ is the least (with respect to some fixed well ordering of $D$) extension of $p$. If $i(p)||i(q)$, then $i'(p')|| i'(q')$. By the property of $i'$, we have that $p'||q'$, which in turn implies that $p||q$.
%\item If $P$ is projectively $\sigma$-$w$-finite-c.c, then for any $\nu$-saturated normal uniform ideal $J$ on $\kappa$ with $\nu<\kappa$, $P$ is $J^n$-Knaster for all $n\in \omega$. This follows immediately from Theorem \ref{theorem: main}.
%\end{enumerate}
%\end{remark}

\section{Acknowledgment}
We thank the anonymous referee for their helpful comments, suggestions and corrections that greatly improve the paper.

\bibliographystyle{amsplain}
\bibliography{bib}

\end{document}